\theoremstyle{plain}
\newtheorem{theo}[subsubsection]{Theorem}
\newtheorem{lemma}[subsubsection]{Lemma}
\newtheorem{coro}[subsubsection]{Corollary}
\newtheorem{prop}[subsubsection]{Proposition}
\theoremstyle{definition}
\newtheorem{defi}[subsubsection]{Definition}
\newtheorem{defis}[subsubsection]{Definitions}
\newtheorem{rmk}[subsubsection]{Remark}
\newtheorem{rmks}[subsubsection]{Remarks}
\newtheorem{ex}[subsubsection]{Example}
\newtheorem{exs}[subsubsection]{Examples}  
\newcommand{\field}[1]{\mathbb{#1}}
\newcommand{\NN}{\field{N}}
\newcommand{\RR}{\field{R}}
\newcommand{\ZZ}{\field{Z}}
\def\id{\mathop{\rm id}\nolimits}
\def\Ad{\mathop{\rm Ad}\nolimits}
\def\ad{\mathop{\rm ad}\nolimits}
\def\ch{\mathop{\rm ch}\nolimits}
\def\sh{\mathop{\rm sh}\nolimits}
\def\vect#1{\overrightarrow{#1}}
\def\SO{\mathop{\rm SO}\nolimits}
\def\so{\mathop{\mathfrak so}\nolimits}
\def\d{\mathrm{d}}
\def\mathi{\mathrm{i}}
\title{From Tools in Symplectic and Poisson Geometry\\ to Souriau's theories\\
 of Statistical Mechanics and Thermodynamics}
\author{Charles-Michel Marle\\
  Universit{\'e} Pierre et Marie Curie\\
  Paris,  France}
\begin{document}
\maketitle
\setcounter{tocdepth}{1} 
\tableofcontents

\noindent
{\bf Abstract\hfill}

\noindent
I present in this paper some tools in Symplectic and Poisson
Geometry in view of their applications in Geometric Mechanics and
Mathematical Physics. After a short discussion of the Lagrangian an Hamiltonian
formalisms, including the use of symmetry groups, 
and a presentation of the Tulczyjew's isomorphisms (which
explain some aspects of the relations between these formalisms),
I explain the concept of manifold of motions of a mechanical system and its use,
due to J.-M. Souriau, in Statistical Mechanics and Thermodynamics. The generalization of
the notion of thermodynamic equilibrium in which the one-dimensional group 
of time translations is replaced by a multi-dimensional, maybe non-commutative Lie group,
is discussed and examples of applications in Physics are given.
\newpage
\hbox to \textwidth{\hfill\emph{In memory of Jean-Marie Souriau (1922--2012)}}

\section{Introduction}\label{Introduction}

\subsection{Contents of the paper, sources  and further reading}
This paper presents tools in Symplectic and Poisson Geometry in view of their application
in Geometric Mechanics and Mathematical 
Physics. The Lagrangian formalism and symmetries
of Lagrangian systems are discussed in
Sections 2 and 3, the Hamiltonian formalism and symmetries of Hamiltonian systems in Sections
4 and 5. Section 6 introduces the concepts of Gibbs state and of thermodynamic equilibrium of a
mechanical system, and presents several examples. For a monoatomic classical ideal gas, eventually 
in a gravity field, or a monoatomic relativistic gas the Maxwell-Boltzmann and Maxwell-Jüttner 
probability distributions are derived. The Dulong and Petit law which governs the specific heat of
solids is obtained. Finally Section 7 presents the generalization of the concept of Gibbs state, due to
Jean-Marie Souriau, in which the group of time translations is replaced by a (multi-dimensional 
and eventually non-Abelian)
Lie group.
\par\smallskip

Several books \cite{AbrahamMarsden, Arnold, cannasdasilva, GuilleminSternberg, holm3, 
Iglesias2000, LaurentPichereauVanhaecke, LibermannMarle1987, OrtegaRatiu2004, Vaisman94} 
discuss, much more fully than in the present paper, the contents
of Sections 2 to 5. The interested reader is referred to these books for detailed proofs of 
results whose proofs
are only briefly sketched here. The recent paper \cite{Marle2014} contains detailed proofs of most
results presented here in Sections 4 and 5.	
\par\smallskip

The main sources used for Sections 6 and 7 are the book and papers by Jean-Marie Souriau
\cite{Souriau1969, Souriau1966, Souriau1974, Souriau1975, Souriau1984} and the beautiful small book 
by G.~W.~Mackey \cite{Mackey1963}.
\par\smallskip

The Euler-Poincaré equation, which is presented with Lagrangian symmetries at the end of Section 3, 
is not really related to symmetries of a Lagrangian system, 
since the Lie algebra which acts on the configuration space of the system is not a Lie algebra 
of symmetries of the Lagrangian. Moreover in its intrinsic form that equation uses the concept 
of Hamiltonian momentum map presented later, in Section 5. Since the Euler-Poincaré equation is not used
in the following sections, the reader can skip the corresponding subsection at his or her first reading.
\par\smallskip

\subsection{Notations}
The notations used are more or less those generally used now in Differential Geometry. The tangent and cotangent
bundles to a smooth manifold $M$ are denoted by $TM$ and $T^*M$, respectively, and their canonical projections 
by $\tau_M:TM\to M$ and $\pi_M:T^*M\to M$. The vector spaces of $k$-multivectors and $k$-forms on $M$
are denoted by $A^k(M)$ and $\Omega^k(M)$, respectively, with $k\in\ZZ$ and, of course,
$A^k(M)=\{0\}$ and $\Omega^k(M)=\{0\}$ if $k<0$ and if $k>\dim M$, $k$-multivectors and $k$-forms being
skew-symmetric. The exterior algebras of multivectors and forms of all degrees are denoted by $A(M)=\oplus_k A^k(M)$
and $\Omega(M)=\oplus_k\Omega^k(M)$, respectively. The exterior differentiation operator
of differential forms on a smooth manifold $M$ is denoted by $\d:\Omega(M)\to\Omega(M)$.
The interior product of a differential form $\eta\in\Omega(M)$ by a vector field
$X\in A^1(M)$ is denoted by $\mathi(X)\eta$. 
\par\smallskip

Let $f:M\to N$ be a smooth map defined on a smooth manifold $M$, with values in another smooth manifold $N$. The pull-back of a form $\eta\in\Omega(N)$
by a smooth map $f:M\to N$ is denoted by $f^*\eta\in\Omega(M)$.
\par\smallskip

A smooth, time-dependent vector field on the smooth manifold $M$ is a smooth map
$X:\RR\times M\to TM$ such that, for each $t\in \RR$ and $x\in M$, $X(t,x)\in T_xM$, the vector space tangent to $M$ at $X$. When, for any $x\in M$, $X(t,x)$ does not depend on
$t\in\RR$, $X$ is a smooth vector field in the usual sense, \emph{i.e.}, an element
in $A^1(M)$. Of course a time-dependent vector field can be defined on an open subset 
of $\RR\times M$ instead than on 
the whole $\RR\times TM$. It defines a differential equation
 $$\frac{\d\varphi(t)}{\d t}=X\bigl(t,\varphi(t)\bigr)\,,\eqno{(*)}
 $$
said to be \emph{associated} to $X$. The (full) \emph{flow} of $X$ is the map
$\Psi^X$, defined on an open subset of $\RR\times\RR\times M$, taking its values in $M$, 
such that for each $t_0\in\RR$ and $x_0\in M$
the parametrized curve $t\mapsto\Psi^X(t,t_0,x_0)$ is the maximal integral curve of
$(*)$ satisfying $\Psi(t_0,t_0,x_0)=x_0$. When $t_0$ and $t\in \RR$ are fixed, the map 
$x_0\mapsto\Psi^X(t,t_0,x_0)$ is a
diffeomorphism, defined on an open subset of $M$ (which may be empty) and taking its values in another open subset of $M$, denoted by $\Psi^X_{(t,\,t_0)}$. When $X$ is in fact a vector field in the usual sense (not dependent on time), $\Psi^X_{(t,\,t_0)}$ only depends on
$t-t_0$. Instead of the full flow of $X$ we can use its \emph{reduced flow} 
$\Phi^X$, defined on an open subset of $\RR\times M$ and taking its values in $M$, related to the full flow $\Psi^X$ by
 $$\Phi^X(t,x_0)=\Psi^X(t,0,x_0)\,,\quad \Psi^X(t,t_0,x_0)=\Phi^X(t-t_0,x_0)\,.
 $$
For each $t\in\RR$, the map $x_0\mapsto\Phi^X(t,x_0)=\Psi^X(t,0,x_0)$ is a diffeomorphism,
denoted by $\Phi^X_t$, defined on an open subset of $M$ (which may be empty) onto another open subset of $M$.
\par\smallskip
  
When $f:M\to N$ is a smooth map defined on a smooth manifold $M$, with values in another smooth manifold $N$, there exists a smooth map $Tf:TM\to TN$ called the 
\emph{prolongation of $f$ to vectors}, which for each fixed $x\in M$ linearly maps
$T_xM$ into $T_{f(x)}N$. When $f$ is a diffeomorphism of $M$ onto $N$,
$Tf$ is an isomorphism of $TM$ onto $TN$. That property allows us to define the
\emph{canonical lifts} of a vector field $X$ in $A^1(M)$ to the tangent bundle $TM$
and to the cotangent bundle $T^*M$. Indeed, for each $t\in\RR$, $\Phi^X_t$ is a diffeomorphism of an open subset of $M$ onto another open subset of $M$. Therefore
$T\Phi^X_t$ is a diffeomorphism of an open subset of $TM$ onto another open 
subset of $TM$. It turns out that when $t$ takes all possible values in $\RR$ the set of all diffeomorphisms $T\Phi^X_t$ is the reduced flow of a vector field $\overline X$ on 
$TM$, which is the \emph{canonical lift} of $X$ to the tangent bundle $TM$. 
\par\smallskip

Similarly, the transpose $(T\Phi^X_{-t})^T$ of $T\Phi^X_{-t}$ is a diffeomorphism of an open subset of the cotangent bundle $T^M$ onto another open subset of $T^*N$, and when
$t$ takes all possible values in $\RR$ the set of all diffeomorphisms $(T\Phi^X_{-t})^T$ 
is the reduced flow of a vector field $\widehat X$ on 
$T^*M$, which is the \emph{canonical lift} of $X$ to the cotangent bundle $T^*M$.
\par\smallskip

The canonical lifts of a vector field to the tangent and cotangent bundles are used
in Sections~3 and~5. They can be defined too for time-dependent vector fields.        

\section{The Lagrangian formalism}\label{LagrangianFormalism}

\subsection{The configuration space and the space of kinematic states} 
\label{ConfigurationSpace}
The principles of Mechanics were stated by the great English mathematician 
\emph{Isaac \hbox{Newton}} (1642--1727)
in his book \emph{Philosophia Naturalis Principia Mathematica} published in 1687 
\cite{Newton1687}.
On this basis, a little more than a century later, \emph{Joseph Louis Lagrange} (1736--1813)
in his book \emph{M\'ecanique analytique} \cite{Lagrange5} derived the equations (today known as
the \emph{Euler-Lagrange equations}) which govern the motion
of a mechanical system made of any number of material points or rigid 
material bodies interacting between them by  very general forces, 
and eventually submitted to external forces. 
\par\smallskip

In modern mathematical language, these equations are written on the \emph{configuration space} and 
on the \emph{space of kinematic states} of the considered mechanical system. The \emph{configuration space} is
a smooth $n$-dimensional manifold $N$ whose elements are all the possible configurations of the system
(a configuration being the position in space of all parts of the system). The \emph{space of kinematic states} is the tangent bundle $TN$ to the configuration space, which is $2n$-dimensional. Each element of the space of kinematic states is a vector tangent to the configuration space at one of its elements, \emph{i.e.} at a configuration of the mechanical system, which describes the velocity at which this configuration changes with time. In local coordinates a configuration of the system
is determined by the $n$ coordinates $x^1,\ldots,x^n$ of a point in $N$, and a kinematic state by the $2n$
coordinates $x^1,\ldots,x^n,v^1,\ldots v^n$ of a vector tangent to $N$ at some element in $N$.

\subsection{The Euler-Lagrange equations}

When the mechanical system is \emph{conservative}, the Euler-Lagrange equations involve a single
real valued function $L$ called the \emph{Lagrangian} of the system,  defined 
on the product of the real line $\RR$ (spanned by the variable $t$ representing the time) with
the manifold $TN$ of \emph{kinematic states} of the system. In local coordinates, the Lagrangian $L$ is expressed as a function of the
$2n+1$ variables, $t,x^1,\ldots,x^n,v^1,\ldots,v^n$ and the Euler-Lagrange equations have the remarkably simple form

 $$\frac{\d}{\d t}\left(\frac{\partial L}{\partial v^i}\bigl(t,x(t),v(t)\bigr)\right)
                       -\frac{\partial L}{\partial x^i}\bigl(t,x(t),v(t)\bigr)=0\,,\quad 1\leq i\leq n\,,
 $$  
where $x(t)$ stands for $x^1(t),\ldots,x^n(t)$ and $v(t)$ for $v^1(t),\ldots,v^n(t)$ with, of course,
 $$v^i(t)=\frac{\d x^i(t)}{\d t}\,,\quad 1\leq i\leq n\,.
 $$

\subsection{Hamilton's principle of stationary action}

The great Irish mathematician \emph{William Rowan Hamilton} (1805--1865) observed 
\cite{hamilton8, hamilton9}
that the Euler-Lagrange equations can be obtained by applying the standard techniques of
\emph{Calculus of Variations}, due to \emph{Leonhard Euler} (1707--1783) and 
\emph{Joseph Louis Lagrange},
to the \emph{action integral}\footnote{Lagrange observed that fact before Hamilton, 
but in the last edition of his book he chose to derive the Euler-Lagrange equations 
by application of the \emph{principle of virtual works}, using a very clever evaluation 
of the virtual work of inertial forces for a smooth infinitesimal variation of the motion.}
 $$I_L(\gamma)=\int_{t_0}^{t_1}L\bigl(t,x(t),v(t)\bigr)\,\d t\,,\quad\hbox{with}\ v(t)=\frac{\d x(t)}{\d t}\,, 
 $$
where $\gamma:[t_0,t_1]\to N$ is a smooth curve in $N$ parametrized by the time $t$. These equations express the fact that the action integral $I_L(\gamma)$ is \emph{stationary} with respect to any smooth 
infinitesimal variation of $\gamma$ with fixed end-points $\bigl(t_0,\gamma(t_0)\bigr)$ and
$\bigl(t_1,\gamma(t_1)\bigr)$. This fact is today called \emph{Hamilton's principle of stationary action}. The reader interested in Calculus of Variations and its applications in Mechanics and Physics is referred to the books \cite{berest, bourguignon, lanczos}.

\subsection{The Euler-Cartan theorem}\label{EulerCartan}

The \emph{Lagrangian formalism} is the use of Hamilton's principle of stationary action for 
the derivation of the equations of motion of a system. It is widely used in Mathematical Physics, 
often with more general Lagrangians involving more than one independent variable and higher order 
partial derivatives of dependent variables. For simplicity I will consider here only  the Lagrangians
of (maybe time-dependent) conservative mechanical systems.
\par\smallskip

An intrinsic geometric expression of the Euler-Lagrange equations, 
wich does not use local coordinates, was obtained by the great French mathematician 
\emph{\'Elie Cartan}
(1869--1951). Let us introduce the concepts used by the statement of this theorem.

\begin{defis}\label{DefisLegendreMapEtAl} 
Let $N$ be the configuration space of a mechanical system and let its tangent bundle
$TN$ be the space of kinematic states of that system. We assume that the evolution with time of 
the state of the system is governed by the Euler-Lagrange equations for a smooth, 
maybe time-dependent Lagrangian $L:\RR\times TN\to \RR$.
\par\smallskip\noindent

{\rm 1.\quad} The cotangent bundle $T^*N$ is called the
\emph{phase space} of the system.  
\par\smallskip\noindent

{\rm 2.\quad} The map ${\mathcal L}_L:\RR\times TN\to T^*N$
 $${\mathcal L}_L(t,v)=\d_{\rm vert} L(t,v)\,,\quad t\in\RR\,,\ v\in TN\,,
 $$
where $\d_{\rm vert} L(t,v)$ is the \emph{vertical differential} of $L$ at $(t,v)$, \emph{i.e} the differential at $v$
of the the map $w\mapsto L(t,w)$, with $w\in\tau_N^{-1}\bigl(\tau_N(v)\bigr)$, is called the
\emph{Legendre map} associated to $L$.  
\par\smallskip\noindent

{\rm 3.\quad} The map $E_L:\RR\times TN\to \RR$ given by
 $$E_L(t,v)=\langle{\mathcal L}_L(t,v),v\bigr\rangle-L(t,v)\,,\quad t\in\RR\,,\ v\in TN\,,
 $$
is called the the \emph{energy function} associated to $L$.
\par\smallskip\noindent

{\rm 4.\quad} The $1$-form on $\RR\times TN$
 $$\widehat\varpi_L={\mathcal L}_L^*\theta_N-E_L(t,v)\d t\,,
 $$
where $\theta_N$ is the Liouville $1$-form on $T^*N$, is called the 
\emph{Euler-Poincaré} $1$-form. 
\end{defis}

\begin{theo}[Euler-Cartan theorem]\label{EulerCartan} 
A smooth curve $\gamma:[t_0,t_1]\to N$ parametrized by the time
$t\in[t_0,t_1]$ is a solution of the Euler-Lagrange equations if and only if, for each $t\in[t_0,t_1]$
the derivative with respect to $t$ of the map $\displaystyle t\mapsto\left(t,\frac{\d\gamma(t)}{\d t}\right)$
belongs to the kernel of the $2$-form $\d\widehat\varpi_L$, in other words if and only if
 $$\mathi\left(\frac{\d}{\d t}\left(t,\frac{\d\gamma(t)}{\d t}\right)\right)\d\widehat\varpi_L
        \left(t,\frac{\d\gamma(t)}{\d t}\right)=0\,.
 $$
\end{theo}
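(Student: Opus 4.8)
The plan is to reduce this intrinsic statement to a computation in one coordinate chart, which is legitimate because the $2$-form $\d\widehat\varpi_L$ and the lifted curve $t\mapsto\bigl(t,\d\gamma(t)/\d t\bigr)$ are defined without reference to coordinates and the kernel condition is checked pointwise along the curve. First I would fix local coordinates $x^1,\ldots,x^n$ on $N$, with the induced coordinates $(x^i,v^i)$ on $TN$ and $(x^i,p_i)$ on $T^*N$, so that the Liouville form is $\theta_N=\sum_i p_i\,\d x^i$. In these coordinates the Legendre map reads $\mathcal{L}_L(t,x,v)=\bigl(x,\partial L/\partial v^i\bigr)$, whence $\mathcal{L}_L^*\theta_N=\sum_i(\partial L/\partial v^i)\,\d x^i$ and $E_L=\sum_i v^i\,\partial L/\partial v^i-L$, so that $\widehat\varpi_L=\sum_i\frac{\partial L}{\partial v^i}\,\d x^i-\Bigl(\sum_i v^i\frac{\partial L}{\partial v^i}-L\Bigr)\,\d t$.

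Next I would compute $\d\widehat\varpi_L$. Writing $p_i=\partial L/\partial v^i$ as a function of $(t,x,v)$ and expanding $\d L$, the terms $p_i\,\d v^i$ cancel and the $\d t\wedge\d t$ term drops, leaving an expression that collects neatly in terms of the \emph{contact} $1$-forms $\sigma^i=\d x^i-v^i\,\d t$:
$$\d\widehat\varpi_L=\sum_i\d p_i\wedge\sigma^i+\sum_i\frac{\partial L}{\partial x^i}\,\sigma^i\wedge\d t.$$
Recognizing the $\sigma^i$ is the key simplification, because the velocity of the lifted curve, namely $\xi=\frac{\partial}{\partial t}+\sum_i v^i\frac{\partial}{\partial x^i}+\sum_i\dot v^i\frac{\partial}{\partial v^i}$ with $v^i=\d x^i/\d t$ and $\dot v^i=\d v^i/\d t$, satisfies $\mathi(\xi)\sigma^i=v^i-v^i=0$, together with $\mathi(\xi)\,\d t=1$ and $\mathi(\xi)\,\d p_i=\frac{\d}{\d t}\bigl(\partial L/\partial v^i\bigr)$ along the curve.

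Then I would contract. Using $\mathi(\xi)(\alpha\wedge\beta)=(\mathi(\xi)\alpha)\beta-\alpha(\mathi(\xi)\beta)$ for $1$-forms and the three identities above, every term in which $\xi$ hits a $\sigma^i$ drops out, and one obtains
$$\mathi(\xi)\,\d\widehat\varpi_L=\sum_i\Bigl(\frac{\d}{\d t}\frac{\partial L}{\partial v^i}-\frac{\partial L}{\partial x^i}\Bigr)\sigma^i,$$
whose coefficients are exactly the left-hand sides of the Euler-Lagrange equations evaluated along $\gamma$. Finally, to obtain the equivalence I would invoke the linear independence of the covectors $\sigma^1,\ldots,\sigma^n$ at each point $\bigl(t,x(t),v(t)\bigr)$: since $\d x^1,\ldots,\d x^n,\d v^1,\ldots,\d v^n,\d t$ form a coframe, the $\sigma^i$ are independent, so the $1$-form $\mathi(\xi)\,\d\widehat\varpi_L$ vanishes if and only if all its coefficients vanish, i.e. if and only if $\gamma$ satisfies the Euler-Lagrange equations.

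I expect the main obstacle to be precisely this last equivalence rather than the differentiation: the computation of $\d\widehat\varpi_L$ is routine bookkeeping, but the two-way implication rests on isolating the contact forms and on the transversality $\mathi(\xi)\,\d t=1$, which guarantees that contracting with $\xi$ loses no information and that membership in $\ker\d\widehat\varpi_L$ captures the entire system of equations rather than a proper subset of it.
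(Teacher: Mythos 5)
Your argument is correct and complete, but note that the paper itself does not prove Theorem \ref{EulerCartan}: it only refers the reader to Malliavin and to Sternberg for the proof. Your proposal therefore supplies a proof where the paper gives none, and it is the classical coordinate computation that those references carry out. The key identities all check out: with $p_i=\partial L/\partial v^i$ one indeed gets $\d\widehat\varpi_L=\sum_i\d p_i\wedge\sigma^i+\sum_i(\partial L/\partial x^i)\,\sigma^i\wedge\d t$ with $\sigma^i=\d x^i-v^i\,\d t$, the lifted velocity $\xi$ annihilates every $\sigma^i$ precisely because the curve in the theorem is the tangent lift of $\gamma$ (so that $v^i(t)=\d x^i(t)/\d t$), and the contraction collapses to $\sum_i\bigl(\tfrac{\d}{\d t}\tfrac{\partial L}{\partial v^i}-\tfrac{\partial L}{\partial x^i}\bigr)\sigma^i$. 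Your closing remark is the right one to insist on: the equivalence (rather than just the implication from Euler--Lagrange to the kernel condition) rests on the linear independence of $\sigma^1,\ldots,\sigma^n$ in the coframe $(\sigma^i,\d t,\d v^i)$, so that the vanishing of the contracted $1$-form forces each Euler--Lagrange expression to vanish separately. Two small points worth making explicit if you write this up: the statement is local and pointwise along the curve, so working in a single chart at each $t$ is legitimate exactly as you say; and no regularity of $L$ is needed anywhere in the computation, which is consistent with the paper's remark that the hyper-regularity hypothesis in Sternberg's version is superfluous.
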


The interested reader will find the proof of that theorem in \cite{Malliavin}, (theorem 2.2, chapter IV, page 262) or,
for hyper-regular Lagrangians (an additional assumption which in fact, is not necessary) in \cite{Sternberg1964}, chapter IV, theorem 2.1 page 167.

\begin{rmk}
In his book \cite{Souriau1969}, \emph{Jean-Marie Souriau} 
uses a slightly different terminology: for him
the odd-dimensional space $\RR\times TN$ is the \emph{evolution space} of the system, and the exact $2$-form $\d\widehat\varpi_L$ on that space is the \emph{Lagrange form}. He defines that $2$-form in a
setting more general than that of the Lagrangian formalism.
\end{rmk}

\section{Lagrangian symmetries}\label{LagrangianSymmetries}
\label{LagrangianSymmetries}

\subsection{Assuumptions and notations}\label{AssumptionsLagrangianSymmetries}
In this section $N$ is the configuration space of a conservative Lagrangian mechanical system 
with a smooth, maybe time dependent Lagrangian $L:\RR\times TN\to \RR$. 
Let $\widehat\varpi_L$ be the Poincaré-Cartan $1$-form on the evolution space $\RR\times TN$. 
\par\smallskip

Several kinds of symmetries can be defined for such a system. Very often, they are  
special cases of \emph{infinitesimal symmetries of the Poincaré-Cartan form}, 
which play an important part in the famous \emph{Noether theorem}.  

\begin{defi}\label{DefiSymmetryPoincareCartan} 
An \emph{infinitesimal symmetry} of the Poincaré-Cartan form $\widehat\varpi_L$ is a 
vector field $Z$ on $\RR\times TN$ such that
 $${\mathcal L}(Z)\widehat\varpi_L=0\,,
 $$
$\mathcal L(Z)$ denoting the Lie derivative of differential forms with respect to $Z$. 
\end{defi}

\begin{exs}\label{ExamplesInfinitesimalSymmetriesPoincareCartan}\hfill
\par\noindent
{\rm 1.\quad}
Let us assume that the Lagrangian $L$ does not depend on the time $t\in\RR$, \emph{i.e.} 
is a smooth function on $TN$. The vector field on $\RR\times TN$ denoted by 
$\displaystyle \frac{\partial}{\partial t}$,
whose projection on $\RR$ is equal to $1$ and whose projection on $TN$ is $0$, is an infinitesimal symmetry of
$\widehat\varpi_L$.
\par\smallskip\noindent

{\rm 2.\quad} 
Let $X$ be a smooth vector field on $N$ 
and $\overline X$ be its canonical lift to the tangent bundle $TN$. 
We still assume that $L$ does not depend on the time $t$. Moreover we assume that $\overline X$ is an infinitesimal symmetry of the Lagrangian $L$, \emph{i.e.} that
${\mathcal L}(\overline X)L=0$. Considered as a vector field on $\RR\times TN$ whose projection 
on the factor $\RR$ is $0$, $\overline X$ is an infinitesimal symmetry of $\widehat\varpi_L$. 
\end{exs}

\subsection{The Noether theorem in Lagrangian formalism}\label{NoetherLagrange}

\begin{theo}[E.~Noether's theorem in Lagrangian formalism]\label{TheoremNoetherLagrange}
Let $Z$ be an infinitesimal symmetry of the Poincaré-Cartan form $\widehat\varpi_L$. 
For each possible motion $\gamma:[t_0,t_1]\to N$ of the Lagrangian system, the function
$\mathi(Z)\widehat\varpi_L$, defined on $\RR\times TN$,
keeps a constant value along the parametrized curve 
$\displaystyle t\mapsto\left(t,\frac{\d\gamma(t)}{\d t}\right)$.
\end{theo}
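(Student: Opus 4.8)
The plan is to combine the Euler-Cartan characterization of motions with the hypothesis that $Z$ is an infinitesimal symmetry, and to extract the conservation law from a single application of the Cartan formula for the Lie derivative. Concretely, fix a possible motion $\gamma:[t_0,t_1]\to N$ and write $\widehat\gamma$ for the lifted curve $\displaystyle t\mapsto\left(t,\frac{\d\gamma(t)}{\d t}\right)$ in the evolution space $\RR\times TN$, whose velocity vector I will denote $\displaystyle \widehat\gamma'(t)=\frac{\d}{\d t}\left(t,\frac{\d\gamma(t)}{\d t}\right)$. The goal is to show that the function $\mathi(Z)\widehat\varpi_L$ is constant along $\widehat\gamma$, which amounts to showing that its derivative along $\widehat\gamma$, namely $\displaystyle \bigl\langle \d\bigl(\mathi(Z)\widehat\varpi_L\bigr),\widehat\gamma'(t)\bigr\rangle$, vanishes for every $t$.

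First I would apply Cartan's magic formula ${\mathcal L}(Z)=\mathi(Z)\d+\d\,\mathi(Z)$ to the Poincaré-Cartan form, giving
$$
{\mathcal L}(Z)\widehat\varpi_L=\mathi(Z)\d\widehat\varpi_L+\d\bigl(\mathi(Z)\widehat\varpi_L\bigr)\,.
$$
Since $Z$ is by Definition~\ref{DefiSymmetryPoincareCartan} an infinitesimal symmetry, the left-hand side is zero, so
$$
\d\bigl(\mathi(Z)\widehat\varpi_L\bigr)=-\,\mathi(Z)\d\widehat\varpi_L\,.
$$
This is the key identity: it rewrites the differential of the candidate conserved quantity as (minus) the contraction of the Lagrange $2$-form $\d\widehat\varpi_L$ with $Z$.

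Next I would evaluate this $1$-form on the velocity $\widehat\gamma'(t)$ of the lifted motion. The derivative of $\mathi(Z)\widehat\varpi_L$ along $\widehat\gamma$ is
$$
\bigl\langle\d\bigl(\mathi(Z)\widehat\varpi_L\bigr),\widehat\gamma'(t)\bigr\rangle
=-\bigl\langle\mathi(Z)\d\widehat\varpi_L,\widehat\gamma'(t)\bigr\rangle
=-\,\d\widehat\varpi_L\bigl(Z,\widehat\gamma'(t)\bigr)\,.
$$
Here I invoke the Euler-Cartan theorem (Theorem~\ref{EulerCartan}): because $\gamma$ is a possible motion, $\widehat\gamma'(t)$ lies in the kernel of the $2$-form $\d\widehat\varpi_L$ at the point $\widehat\gamma(t)$, that is $\mathi\bigl(\widehat\gamma'(t)\bigr)\d\widehat\varpi_L=0$. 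By the skew-symmetry of the $2$-form, $\d\widehat\varpi_L\bigl(Z,\widehat\gamma'(t)\bigr)=-\,\d\widehat\varpi_L\bigl(\widehat\gamma'(t),Z\bigr)=0$. Hence the derivative of $\mathi(Z)\widehat\varpi_L$ along $\widehat\gamma$ vanishes identically, and the function is constant along the lifted curve, which is exactly the assertion.

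The argument is essentially a two-line manipulation once the right tools are in place, so there is no serious analytic obstacle; the only point requiring care is bookkeeping rather than depth. The main thing to get right is the interplay between the two roles of contraction: the symmetry hypothesis contracts $\d\widehat\varpi_L$ with $Z$, while the equations of motion contract it with $\widehat\gamma'(t)$, and it is precisely the antisymmetry of $\d\widehat\varpi_L$ that lets one conclude that the same quantity $\d\widehat\varpi_L(Z,\widehat\gamma'(t))$ is killed by either reading. I would also note in passing that nothing in the proof requires $Z$ to come from a lifted vector field on $N$ or the Lagrangian to be time-independent, so the statement holds at the full generality of Definition~\ref{DefiSymmetryPoincareCartan}, which is the natural scope for Noether's theorem in this formulation.
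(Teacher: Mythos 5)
Your proof is correct and follows essentially the same route as the paper's: Cartan's formula ${\mathcal L}(Z)=\mathi(Z)\circ\d+\d\circ\mathi(Z)$ converts the symmetry hypothesis into $\d\bigl(\mathi(Z)\widehat\varpi_L\bigr)=-\mathi(Z)\d\widehat\varpi_L$, and the Euler-Cartan theorem together with the skew-symmetry of $\d\widehat\varpi_L$ kills the evaluation on the velocity of the lifted motion. Your closing remark on the generality of the statement is accurate and consistent with the paper's framing.
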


\begin{proof}
Let $\gamma:[t_0,t_1]\to N$ be a motion of the Lagrangian system, \emph{i.e.} a solution of the
Euler-Lagrange equations. The Euler-Cartan theorem \ref{EulerCartan} proves that, for any
$t\in[t_0,t_1]$,
 $$\mathi\left(\frac{\d}{\d t}\left(t,\frac{\d\gamma(t)}{\d t}\right)\right)\d\widehat\varpi_L
   \left(t,\frac{\d\gamma(t)}{\d t}\right)=0\,.
 $$
Since $Z$ is an infinitesimal symmetry of $\widehat\varpi_L$,
 $${\mathcal L}(Z)\widehat\varpi_L=0\,.
 $$
Using the well known formula relating the Lie derivative, the interior product and the exterior derivative 
 $${\mathcal L}(Z)=\mathi(Z)\circ \d+\d\circ\mathi(Z)
 $$
we can write
 \begin{align*}
  \frac{\d}{\d t}\left(\mathi(Z)\widetilde\varpi_L
   \left(t,\frac{\d\gamma(t)}{\d t}\right)\right)
  &=\left\langle\d\mathi(Z)\widehat\varpi_L,\frac{\d}{\d t}
      \left(t,\frac{\d\gamma(t)}{\d t}\right)\right\rangle\\
  &=-\left\langle\mathi(Z)\d\widehat\varpi_L,\frac{\d}{\d t}
      \left(t,\frac{\d\gamma(t)}{\d t}\right)\right\rangle\\
  &=0\,.\hfill\qedhere
 \end{align*}
\end{proof}

\begin{ex}\label{ExampleNotherLagrange} 
When the Lagrangian $L$ does not depend on time, application of Emmy Noether's theorem to the vector field $\displaystyle\frac{\partial}{\partial t}$ shows that the energy $E_L$ remains constant during any possible motion of the system, since 
$\displaystyle \mathi\left(\frac{\partial}{\partial t}\right)\widehat\varpi_L=-E_L$.  
\end{ex}

\begin{rmks}\label{GeneralizationNoetherLagrange}\hfill

\par\noindent
{\rm 1.\quad} Theorem \ref{TheoremNoetherLagrange} is due to the German mathematician
Emmy Noether (1882--1935), who proved it under much more general assumptions than 
those used here. For a very nice presentation of Emmy Noether's theorems in a much more
general setting and their applications in Mathematical Physics, interested readers are 
referred to the very nice book by Yvette Kosmann-Schwarzbach \cite{Kosmann2011}.

\par\smallskip\noindent
{\rm 2.\quad}
Several generalizations of the Noether theorem exist. For example, if instead of
being an infinitesimal symmetry of $\widehat\varpi_L$, \emph{i.e.} instead of satisfying
 ${\mathcal L}(Z)\widehat\varpi_L=0
 $
the vector field $Z$ satisfies
 $${\mathcal L}(Z)\widehat\varpi_L=\d f\,,
 $$
where $f:\RR\times TM\to \RR$ is a smooth function, which implies of course
 ${\mathcal L}(Z)(\d\widehat\varpi_L)=0
 $,
the function 
 $$\mathi(Z)\widehat\varpi_L-f
 $$
keeps a constant value along
$\displaystyle t\mapsto\left(t,\frac{\d\gamma(t)}{\d t}\right)$.
\par\smallskip
\end{rmks}

\subsection{The Lagrangian momentum map}\label{LagrangianMomentum}
The Lie bracket of two infinitesimal symmetries of $\widehat\varpi_L$ is too 
an infinitesimal symmetry
of $\widehat\varpi_L$. Let us therefore assume that there exists a finite-dimensional 
Lie algebra of vector fields on $\RR\times TN$ whose elements are infinitesimal symmetries 
of $\widehat\varpi_L$.

\begin{defi}\label{DefiLagrangianMomentum}
Let $\psi:{\mathcal G}\to A^1(\RR\times TN)$ be a Lie algebras homomorphism of a 
finite-dimensional real Lie algebra $\mathcal G$ into the Lie algebra of smooth vector fields 
on $\RR\times TN$ such that, for each $X\in {\mathcal G}$, $\psi(X)$ is an infinitesimal 
symmetry of $\widehat\varpi_L$. The Lie algebras homomorphism $\psi$ is said to be a 
\emph{Lie algebra action on $\RR\times TN$ by infinitesimal symmetries of $\widehat\varpi_L$}.
The map $K_L:\RR\times TN\to{\mathcal G}^*$, which takes its values in the dual 
${\mathcal G}^*$ of the Lie algebra $\mathcal G$, defined by
 $$\bigl\langle K_L(t,v),X\bigr\rangle=\mathi\bigl(\psi(X)\bigr)\widehat\varpi_L(t,v)\,,
    \quad X\in{\mathcal G}\,,\quad(t,v)\in\RR\times TN\,,
 $$
is called the \emph{Lagrangian momentum} of the Lie algebra action $\psi$. 
\end{defi} 

\begin{coro}[of E.~Noether's theorem]\label{CoroNoetherLagrange}
Let $\psi:{\mathcal G}\to A^1(\RR\times TM)$ be an action 
of a finite-dimensional real Lie algebra $\mathcal G$ 
on the evolution space $\RR\times TN$
of a conservative Lagrangian system, by infinitesimal symmetries of the Poincaré-Cartan form
$\widehat\varpi_L$. For each possible motion
$\gamma:[t_0,t_1]\to N$ of that system, the Lagrangian momentum map $K_L$
keeps a constant value along the parametrized curve 
$\displaystyle t\mapsto\left(t,\frac{\d\gamma(t)}{\d t}\right)$. 
\end{coro}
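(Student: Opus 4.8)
The plan is to deduce this corollary directly from the Noether theorem (Theorem \ref{TheoremNoetherLagrange}), which has already been established, by applying it componentwise to a basis of the finite-dimensional Lie algebra $\mathcal G$. The key observation is that the Lagrangian momentum map $K_L$ takes values in the vector space ${\mathcal G}^*$, and a ${\mathcal G}^*$-valued function is constant along a curve precisely when each of its coordinate functions, obtained by pairing with a fixed element of $\mathcal G$, is constant.

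First I would fix an arbitrary element $X\in{\mathcal G}$. Since $\psi$ is a Lie algebra action by infinitesimal symmetries of $\widehat\varpi_L$, the vector field $\psi(X)$ on $\RR\times TN$ satisfies ${\mathcal L}\bigl(\psi(X)\bigr)\widehat\varpi_L=0$, so $\psi(X)$ is an infinitesimal symmetry in the sense of Definition \ref{DefiSymmetryPoincareCartan}. I would then apply Theorem \ref{TheoremNoetherLagrange} to the vector field $Z=\psi(X)$: for each possible motion $\gamma:[t_0,t_1]\to N$, the function $\mathi\bigl(\psi(X)\bigr)\widehat\varpi_L$ keeps a constant value along the curve $t\mapsto\bigl(t,\frac{\d\gamma(t)}{\d t}\bigr)$. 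By the defining relation of the Lagrangian momentum in Definition \ref{DefiLagrangianMomentum}, this function is exactly $\bigl\langle K_L,X\bigr\rangle$ evaluated along that curve.

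Next I would argue that, since $X\in{\mathcal G}$ was arbitrary, the pairing $\bigl\langle K_L\bigl(t,\frac{\d\gamma(t)}{\d t}\bigr),X\bigr\rangle$ is independent of $t$ for every $X$. Choosing a basis $(X_1,\ldots,X_m)$ of $\mathcal G$, the components of $K_L$ in the dual basis are precisely the functions $t\mapsto\bigl\langle K_L,X_i\bigr\rangle$, each of which is constant along the curve. Since a path in the finite-dimensional vector space ${\mathcal G}^*$ is constant if and only if all its components in a fixed basis are constant, the map $K_L$ itself keeps a constant value along $t\mapsto\bigl(t,\frac{\d\gamma(t)}{\d t}\bigr)$, which is the desired conclusion.

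There is essentially no serious obstacle here: the entire content is packaged in the already-proved Noether theorem, and the only new ingredient is the elementary linear-algebra remark that constancy of a vector-valued function reduces to constancy of its scalar components. The one point deserving care is that the finite-dimensionality of $\mathcal G$ is what guarantees a finite basis and hence legitimizes this componentwise reduction; this hypothesis is explicitly present in the statement, so the argument goes through cleanly. I would therefore expect the proof to be short, with the bulk of the work being the invocation of Theorem \ref{TheoremNoetherLagrange} and an appeal to Definition \ref{DefiLagrangianMomentum}.
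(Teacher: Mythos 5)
Your proposal is correct and follows essentially the same route as the paper's own proof: apply Theorem \ref{TheoremNoetherLagrange} to each $\psi(X)$, identify $\mathi\bigl(\psi(X)\bigr)\widehat\varpi_L$ with $\bigl\langle K_L,X\bigr\rangle$ via Definition \ref{DefiLagrangianMomentum}, and conclude by the componentwise constancy of the ${\mathcal G}^*$-valued map. Your added remark on the role of finite-dimensionality is a harmless elaboration of what the paper leaves implicit.
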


\begin{proof} Since for each $X\in{\mathcal G}$ the function 
$(t,v)\mapsto \bigl\langle K_L(t,v),X\bigr\rangle$ keeps a constant value
along the parametrized curve 
$\displaystyle t\mapsto\left(t,\frac{\d\gamma(t)}{\d t}\right)$, the map $K_L$ itself
keeps a constant value along that parametrized curve.
\end{proof}

\begin{ex}\label{ExampleCoroNoetherLagrange}
Let us assume that the Lagrangian $L$ does not depend explicitly on the time $t$ and 
is invariant by the canonical lift to the tangent bundle of the action on $N$ of the 
six-dimensional group of Euclidean diplacements (rotations and translations) of the 
physical space. The corresponding infinitesimal action of the Lie algebra of infinitesimal
Euclidean displacements (considered as an action on $\RR\times TN$, the action 
on the factor $\RR$ being trivial) is an action by infinitesimal symmetries of 
$\widehat\varpi_L$. The \emph{six components of the Lagrangian momentum map} are the 
\emph{three components of the total linear momentum} and the \emph{three components 
of the total angular momentum}.
\end{ex}

\begin{rmk} 
These results are valid \emph{without any assumption of hyper-regularity} of the Lagrangian.
\end{rmk}

\subsection{The Euler-Poincar\'e equation}\label{EulerPoincareEquation}

In a short Note \cite{Poincare01} published in 1901, the great french mathematician 
\emph{Henri Poincaré} (1854--1912) proposed a new formulation of the equations of Mechanics. 
\par\smallskip

Let $N$ be the configuration manifold of a conservative
Lagrangian system, with a smooth Lagrangian $L:TN\to \RR$ which does not depend
explicitly on time. Poincaré assumes that there exists an homomorphism $\psi$ of a finite-dimensional 
real Lie algebra $\mathcal G$ into the Lie algebra $A^1(N)$ of smooth vector fields on $N$, such that
for each $x\in N$, the values at $x$ of the vetor fields $\psi(X)$, when $X$ varies in $\mathcal G$,
completely fill the tangent space $T_xN$. 
The action $\psi$ is then said to be 
\emph{locally transitive}.
\par\smallskip

Of course these assumptions imply $\dim{\mathcal G}\geq\dim N$.
\par\smallskip

Under these assumptions, Henri Poincaré
proved that the equations of motion of the Lagrangian system could be written
on $N\times{\mathcal G}$ or on $N\times{\mathcal G}^*$, where ${\mathcal G}^*$ is the dual of the
Lie algebra $\mathcal G$, instead of on the tangent bundle $TN$. When $\dim{\mathcal G}=\dim N$
(which can occur only when the tangent bundle $TN$ is trivial) the obtained equation, called the
\emph{Euler-Poincaré equation}, is \emph{perfectly equivalent to the Euler-Lagrange equations} and may, 
in certain cases, be easier to use. But when $\dim{\mathcal G}>\dim N$, the system made by the Euler-Poincaré equation is 
\emph{underdetermined}.  
\par\smallskip

Let $\gamma:[t_0,t_1]\to N$ be a smooth parametrized curve in $N$. Poincaré proves that there exists a
smooth curve $V:[t_0,t_1]\to{\mathcal G}$ in the Lie algebra ${\mathcal G}$ such that, for each
$t\in[t_0,t_1]$,
 $$\psi\bigl(V(t)\bigr)\bigl(\gamma(t)\bigr)=\frac{\d\gamma(t)}{\d t}\,.\eqno(*)
 $$
When $\dim{\mathcal G}>\dim N$ the smooth curve $V$ in $\mathcal G$ is not uniquely 
determined by the smooth curve $\gamma$ in $N$. However, instead of writing the 
second-order Euler-Lagrange differential equations on $TN$ satisfied by $\gamma$ 
when this curve is a possible motion of the Lagrangian system,
Poincaré derives a \emph{first order differential equation for the curve $V$} and proves that it is 
satisfied, together with Equation $(*)$,
\emph{if and only if $\gamma$ is a possible motion of the Lagrangian system}.
\par\smallskip

Let $\varphi:N\times{\mathcal G}\to TN$ and $\overline L:N\times{\mathcal G}\to\RR$ be the maps
 $$\varphi(x,X)=\psi(X)(x)\,,\quad \overline L(x,X)=L\circ\varphi(x,X)\,.
 $$
We denote by $\d_1\overline L:N\times{\mathcal G}\to T^*N$ and by 
$d_2\overline L:N\times{\mathcal G}\to{\mathcal G}^*$ the partial differentials of 
$\overline L:N\times{\mathcal G}\to\RR$ with respect to its first variable $x\in N$ and with respect to its second variable $X\in{\mathcal G}$.
\par\smallskip

The map $\varphi:N\times{\mathcal G}\to TN$ is a \emph{surjective vector bundles morphism} 
of the trivial vector bundle $N\times{\mathcal G}$ into the tangent bundle $TN$. Its 
\emph{transpose}
$\varphi^T:T^*N\to N\times{\mathcal G}^*$ is therefore an \emph{injective vector bundles morphism},
which can be written
 $$\varphi^T(\xi)=\bigl(\pi_N(\xi),J(\xi)\bigr)\,,
 $$
where $\pi_N:T^*N\to N$ is the canonical projection of the cotangent bundle and $J:T^*N\to{\mathcal G}^*$
is a smooth map whose restriction to each fibre $T_x^*N$ of the cotangent bundle is linear, and is the 
transpose of the map $X\mapsto\varphi(x,X)=\psi(X)(x)$.

\begin{rmk}\label{RemarkMomentumMapEulerPoincare}
The homomorphism $\psi$ of the Lie algebra $\mathcal G$ into the Lie algebra
$A^1(N)$ of smooth vector fields on $N$ is an action of that Lie algebra, in the sense defined below 
(\ref{DefiActionLieGroupLieAlgebra}). That action can be canonically lifted into a Hamiltonian action of $\mathcal G$ on $T^*N$, endowed with
its canonical symplectic form $\d\theta_N$ (\ref{DefisHamiltonianAction}). The map $J$ is in fact a 
\emph{Hamiltonian momentum map} for that Hamiltonian action (\ref{ExistenceMomentum}).
\end{rmk}
 
Let ${\mathcal L}_L=\d_{\rm vert}L:TN\to T^*N$ be the \emph{Legendre map} defined in \ref{DefisLegendreMapEtAl}.

\begin{theo}[Euler-Poincaré equation]\label{EulerPoincareEquation} 
With the above defined notations, let $\gamma:[t_0,t_1]\to N$
be a smooth parametrized curve in $N$ and $V:[t_0,t_1]\to{\mathcal G}$ be a smooth parametrized curve
such that, for each $t\in[t_0,t_1]$,
$$\psi\bigl(V(t)\bigr)\bigl(\gamma(t)\bigr)=\frac{\d\gamma(t)}{\d t}\,.\eqno(*)
 $$
The curve $\gamma$ is a possible motion of the Lagrangian system if and only if 
$V$ satisfies the equation
 $$\left(\frac{\d}{\d t}-\ad^*_{V(t)}\right)\Bigl(J\circ{\mathcal L}_L\circ\varphi\bigl(\gamma(t),V(t)\bigr)\Bigr)
     -J\circ\d_1 \overline L\bigl(\gamma(t),V(t)\bigr)=0\,.\eqno({*}{*})
 $$
\end{theo}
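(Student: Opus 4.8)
The plan is to derive equation $({*}{*})$ from \emph{Hamilton's principle of stationary action} by reducing the variational problem from the tangent bundle $TN$ to the product $N\times{\mathcal G}$. Since the action $\psi$ is locally transitive, every fixed-endpoint infinitesimal variation of a curve $\gamma$ has the form $\delta\gamma(t)=\psi\bigl(W(t)\bigr)\bigl(\gamma(t)\bigr)=\varphi\bigl(\gamma(t),W(t)\bigr)$ for some smooth curve $W:[t_0,t_1]\to{\mathcal G}$, and conversely any such $W$ generates an admissible variation (by flowing $\gamma(t)$, at each frozen $t$, along the vector field $\psi(W(t))$, which fixes the endpoints as soon as $W(t_0),W(t_1)$ lie in the relevant kernel). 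By Hamilton's principle $\gamma$ is a motion if and only if $I_L(\gamma)=\int_{t_0}^{t_1}L\bigl(\dot\gamma(t)\bigr)\,\d t=\int_{t_0}^{t_1}\overline L\bigl(\gamma(t),V(t)\bigr)\,\d t$ is stationary under all such variations; so it suffices to compute this variation and recognize its vanishing as $({*}{*})$.

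Two identities form the technical core. First I would establish that
$$\d_2\overline L(x,X)=J\circ{\mathcal L}_L\circ\varphi(x,X)\,.$$
This follows from the definitions: differentiating $\overline L(x,X)=L\bigl(\psi(X)(x)\bigr)$ with respect to $X$ at fixed $x$ is a vertical differentiation in $TN$, so $\langle\d_2\overline L(x,X),Y\rangle=\langle{\mathcal L}_L(\varphi(x,X)),\varphi(x,Y)\rangle$, and the right-hand side equals $\langle J({\mathcal L}_L(\varphi(x,X))),Y\rangle$ because $J$ restricted to $T_x^*N$ is by construction the transpose of $Y\mapsto\varphi(x,Y)$. Second, I would derive the \emph{variational constraint} relating $\delta V=\partial_sV|_{s=0}$ to $W$. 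Writing a two-parameter family with $\partial_t\gamma=\psi(V)(\gamma)$ and $\partial_s\gamma=\psi(W)(\gamma)$ and equating the mixed partials $\partial_s\partial_t\gamma=\partial_t\partial_s\gamma$, the second-order terms combine into the bracket of the vector fields $\psi(V)$ and $\psi(W)$; using that $\psi$ is a Lie algebra homomorphism, $[\psi(V),\psi(W)]=\psi([V,W])$, one obtains
$$\psi\bigl(\delta V-\dot W-[V,W]\bigr)(\gamma)=0\,,$$
that is, $\delta V=\dot W+[V,W]$ modulo the kernel of $\varphi(\gamma(t),\,\cdot\,)$.

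With these in hand the computation assembles quickly. The variation of the reduced action is
$$\delta\int_{t_0}^{t_1}\overline L\,\d t=\int_{t_0}^{t_1}\Bigl(\langle\d_1\overline L,\psi(W)(\gamma)\rangle+\langle\d_2\overline L,\delta V\rangle\Bigr)\,\d t\,.$$
Setting $\mu(t)=\d_2\overline L(\gamma,V)=J\circ{\mathcal L}_L\circ\varphi(\gamma,V)$, I would rewrite the first term, using again the transpose property of $J$, as $\langle J\circ\d_1\overline L,W\rangle$; and expand the second term as $\langle\mu,\dot W\rangle+\langle\mu,[V,W]\rangle$, turning the bracket into the coadjoint pairing via $\langle\mu,[V,W]\rangle=\langle\ad^*_V\mu,W\rangle$ and integrating the first summand by parts to produce $-\langle\dot\mu,W\rangle$ plus a boundary term. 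Collecting,
$$\delta\int_{t_0}^{t_1}\overline L\,\d t=\int_{t_0}^{t_1}\Bigl\langle J\circ\d_1\overline L-\dot\mu+\ad^*_V\mu,\ W\Bigr\rangle\,\d t\,,$$
whose vanishing for all admissible $W$ forces the ${\mathcal G}^*$-valued coefficient to vanish, i.e.\ $\dot\mu-\ad^*_V\mu-J\circ\d_1\overline L=0$, which is exactly equation $({*}{*})$.

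The delicate point, and the step I expect to be the main obstacle, is the bookkeeping of the kernel of $\varphi(\gamma(t),\,\cdot\,)$ in the \emph{underdetermined} case $\dim{\mathcal G}>\dim N$, where neither $V$ nor $W$ nor $\delta V$ is uniquely determined. The saving observation is that $\mu$ annihilates that kernel: if $\psi(\kappa)(\gamma)=0$ then $\langle\mu,\kappa\rangle=\langle{\mathcal L}_L(\dot\gamma),\varphi(\gamma,\kappa)\rangle=0$. This single fact does three jobs at once — it shows the kernel ambiguity in $\delta V$ does not affect $\langle\mu,\delta V\rangle$; it makes the boundary term $[\langle\mu,W\rangle]_{t_0}^{t_1}$ vanish (fixed endpoints only force $W(t_0),W(t_1)$ into the kernel, not to zero); and it guarantees that, since $W$ may be chosen freely on $(t_0,t_1)$, the vanishing of the integral forces the coefficient to vanish identically. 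One should also check that a smooth curve $V_s$ solving $\dot\gamma_s=\psi(V_s)(\gamma_s)$ can be chosen along the varied curves, which is immediate because $\varphi$ is a surjective submersion.
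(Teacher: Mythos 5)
Your argument is correct, but note that the paper does not prove this theorem at all: it refers the reader to Poincar\'e's original Note (a proof in local coordinates), to more intrinsic proofs in \cite{marlejgsp, Marle2014}, and mentions without details a third route in which the result is deduced from the Euler--Cartan theorem \ref{EulerCartan}. What you have written is essentially an intrinsic rendering of Poincar\'e's variational derivation: reduce Hamilton's principle to $N\times{\mathcal G}$, establish the constrained form $\delta V=\dot W+[V,W]$ of the admissible variations from equality of mixed partials and the homomorphism property of $\psi$, and integrate by parts. The two identities you isolate are the right ones --- $\d_2\overline L=J\circ{\mathcal L}_L\circ\varphi$ is exactly the content of the paper's remark that $({*}{*})$ and $({*}{*}{*})$ are equivalent, and the transpose property of $J$ converts $\langle\d_1\overline L,\varphi(\gamma,W)\rangle$ into $\langle J\circ\d_1\overline L,W\rangle$ --- and your signs reproduce $({*}{*})$ with the paper's convention $\langle\ad^*_V\mu,W\rangle=\langle\mu,[V,W]\rangle$. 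You also correctly identify and dispose of the one genuinely delicate point in the underdetermined case $\dim{\mathcal G}>\dim N$, namely that $\mu=\d_2\overline L(\gamma,V)$ annihilates $\ker\varphi(\gamma(t),\cdot)$, which is what makes the boundary terms vanish, renders the ambiguity in $\delta V$ harmless, and lets the fundamental lemma of the calculus of variations apply in both directions of the equivalence. By contrast, the alternative deduction from the Euler--Cartan theorem that the paper alludes to would avoid the variational machinery entirely and work instead with the kernel of $\d\widehat\varpi_L$; your route has the advantage of being self-contained given Hamilton's principle, which the paper has already stated.
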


The interested reader will find a proof of that theorem in local coordinates in the original Note by Poincaré \cite{Poincare01}. More intrinsic proofs can be found in \cite{marlejgsp, Marle2014}. Another proof is possible, in which
that theorem is deduced from the Euler-Cartan theorem \ref{EulerCartan}.

\begin{rmk}
Equation $(*)$ is called the \emph{compatibility condition} and Equation $(**)$ is the
\emph{Euler-Poincaré equation}. It can be written under the equivalent form
 $$\left(\frac{\d}{\d t}-\ad^*_{V(t)}\right)\Bigl(\d_2\overline L\bigl(\gamma(t),V(t)\bigr)\Bigr)
     -J\circ\d_1 \overline L\bigl(\gamma(t),V(t)\bigr)=0\,.\eqno({*}{*}{*})
 $$
\end{rmk}

Examples of applications of the Euler-Poincaré equation can be found in 
\cite{holm3, marlejgsp, Marle2014} and, for an application in Thermodynamics,
\cite{Barbaresco2015}.

\section{The Hamiltonian formalism}\label{HamiltonianFormalism}

The Lagrangian formalism can be applied to any smooth Lagrangian. Its application yields 
\emph{second order differential equations} on $\RR\times TN$ (in local coordinates, the 
\emph{Euler-Lagrange equations})
which in general \emph{are not solved with respect to the second order derivatives 
of the unknown functions with respect to time}. The classical existence and unicity theorems 
for the solutions of differential equations (such as the \emph{Cauchy-Lipschitz theorem})
therefore \emph{cannot be applied to these equations}. 

Under the additional assumption that the Lagrangian is \emph{hyper-regular}, 
a very clever change of variables discovered by 
\emph{William Rowan Hamilton}~\footnote{\emph{Lagrange} obtained however Hamilton's equations before 
\emph{Hamilton}, but
only in a special case, for the slow \lq\lq variations of constants\rq\rq\ such 
as the orbital parameters of planets in the solar system \cite{Lagrange3, Lagrange4}.} 
\cite{hamilton8, hamilton9}
allows a new formulation of these equations in the framework of \emph{symplectic geometry}. 
The \emph{Hamiltonian formalism} discussed below is the use of these new equations. 
It was later generalized independently of the Lagrangian formalism.

\subsection{Hyper-regular Lagrangians}\label{HyperRegularLagrangians}

\subsubsection{Assumptions made in this section}\label{AssumptionsHyperRegularLagrangians}
We consider in this section a smooth, maybe time-dependent Lagrangian $L:\RR\times TN\to\RR$,
which is such that the Legendre map 
(\ref{DefisLegendreMapEtAl}) ${\mathcal L}_L:\RR\times TN\to T^*N$ satisfies the following property:
for each fixed value of the time $t\in\RR$, the map
$v\mapsto{\mathcal L}_L(t,v)$ is a smooth diffeomorphism of the tangent bundle $TN$ 
onto the cotangent bundle $T^*N$. An equivalent assumption is the following: the map
$(\id_\RR,{\mathcal L}_L):(t,v)\mapsto\bigl(t,{\mathcal L}_L(t,v)\bigr)$ is a smooth 
diffeomorphism of $\RR\times TN$
onto $\RR\times T^*N$. The Lagrangian $L$ is then said to be 
\emph{hyper-regular}. The equations of motion can be
written on $\RR\times T^*N$ instead of $\RR\times TN$.

\begin{defis}\label{DefiHamiltonian}
Under the assumption \ref{AssumptionsHyperRegularLagrangians}, the function 
$H_L:\RR\times T^*N\to \RR$ given by
 $$H_L(t,p)=E_L\circ(\id_\RR,{\mathcal L}_L)^{-1}(t,p)\,,\quad t\in\RR\,,\ p\in T^*N\,,
 $$    
($E_L:\RR\times TN\to \RR$ being the \emph{energy function} defined in \ref{DefisLegendreMapEtAl}) 
is called the \emph{Hamiltonian} associated to the hyper-regular Lagrangian $L$.
\par\smallskip

The $1$ form defined on $\RR\times T^*N$
 $$\widehat\varpi_{H_L}=\theta_N - H_L\d t\,,
 $$
where $\theta_N$ is the \emph{Liouville $1$-form} on $T^*N$, is 
called the \emph{Poincaré-Cartan $1$-form in the Hamiltonian formalism}.
\end{defis}

\begin{rmk}\label{PoincareCartanLagrangianAndHamiltonian}
The \emph{Poincaré-Cartan $1$-form} $\widehat\varpi_L$ on $\RR\times TN$, defined in
\ref{DefisLegendreMapEtAl}, is the pull-back, by the diffeomorphism
$(\id_\RR,{\mathcal L}_L):\RR\times TN\to\RR\times T^*N$, of the 
Poincaré-Cartan $1$-form $\widehat\varpi_{H_L}$ in the Hamiltonian
formalism on $\RR\times T^*N$ defined above. 
\end{rmk}

\subsection{Presymplectic manifolds}

\begin{defis}\label{DefiPresymplectic}
 A \emph{presymplectic form} on a smooth manifold $M$ is a $2$-form $\omega$ on $M$ which is closed, \emph{i.e.} such that $\d\omega=0$. A manifold $M$ equipped with a presymplectic form
$\omega$ is called a \emph{presymplectic manifold} and denoted by $(M,\omega)$.
\par\smallskip
The \emph{kernel} $\ker\omega$ of a presymplectic form $\omega$ defined on a smooth manifold $M$ is the set of vectors $v\in TM$ such that $\mathi(v)\omega=0$. 
\end{defis}
 
\begin{rmks}\label{KernelOfPresymplecticForm} 
A symplectic form $\omega$ on a manifold $M$ is a presymplectic form which, 
moreover, is non-degenerate, \emph{i.e.} such that for each $x\in M$ and each non-zero
vector $v\in T_xM$, there exists another vector $w\in T_xM$ such that $\omega(x)(v,w)\neq 0$.
Or in other words, a presymplectic form $\omega$ whose kernel is the set of null vectors.
\par\smallskip

The kernel of a presymplectic form $\omega$ on a smooth manifold $M$ is a vector
sub-bundle of $TM$ if and only if for each $x\in M$, the vector subspace 
$T_xM$ of vectors $v\in T_xM$ which satisfy
$\mathi(v)\omega=0$ is of a fixed dimension, the same for all points $x\in M$. A presymplectic 
form which satisfies that condition is said to be of \emph{constant rank}. 
\end{rmks}

\begin{prop}\label{QuotientOfPresymplecticManifold}
Let $\omega$ be a presymplectic form of constant rank (\ref{KernelOfPresymplecticForm}) on a smooth
manifold $M$. The kernel $\ker\omega$ of $\omega$ is a completely integrable vector sub-bundle of $TM$, 
which defines a foliation ${\mathcal F}_\omega$ of $M$ into connected immersed submanifolds which, 
at each point of $M$, have the fibre of $\ker\omega$ at that point as tangent vector space. 
\par\smallskip

We now assume in addition that this foliation is simple, \emph{i.e.} such that the set of leaves of 
${\mathcal F}_\omega$, denoted by $M/\ker\omega$, has a smooth manifold structure for which the canonical projection 
$p:M\to M/\ker\omega$ (which associates to each point $x\in M$ the leaf which contains $x$) is a smooth submersion.
There exists on $M/\ker\omega$ a unique symplectic form $\omega_r$ such that
  $$\omega= p^*\omega_r\,.
  $$
\end{prop}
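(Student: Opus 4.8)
The plan is to obtain the foliation in the first part from the Frobenius theorem, and to construct $\omega_r$ in the second part by descending $\omega$ through the submersion $p$, the whole difficulty being concentrated in the fact that $\omega$ is constant along the leaves. For the first part, the constant rank assumption makes $\ker\omega$ a vector sub-bundle of $TM$, so by Frobenius it is enough to prove that it is involutive. First I would show that every vector field $X$ with values in $\ker\omega$ satisfies ${\mathcal L}(X)\omega=0$: by the formula ${\mathcal L}(X)=\mathi(X)\circ\d+\d\circ\mathi(X)$ recalled above, together with $\d\omega=0$ and $\mathi(X)\omega=0$, both terms vanish. Then for two such vector fields $X,Y$, the identity $\mathi([X,Y])={\mathcal L}(X)\circ\mathi(Y)-\mathi(Y)\circ{\mathcal L}(X)$ gives
$$\mathi([X,Y])\omega={\mathcal L}(X)\bigl(\mathi(Y)\omega\bigr)-\mathi(Y)\bigl({\mathcal L}(X)\omega\bigr)=0\,,$$
so $[X,Y]$ also takes its values in $\ker\omega$; hence $\ker\omega$ is completely integrable and yields the foliation ${\mathcal F}_\omega$.

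For the second part I would first note that, $p$ being a surjective submersion, $Tp$ is everywhere surjective, so $p^*$ is injective on differential forms; this immediately gives the \emph{uniqueness} of $\omega_r$. To construct it, for $b\in M/\ker\omega$ and $u,v\in T_b(M/\ker\omega)$ I would choose any $x\in p^{-1}(b)$ and any lifts $\widetilde u,\widetilde v\in T_xM$ with $Tp(\widetilde u)=u$ and $Tp(\widetilde v)=v$, and set $\omega_r(b)(u,v)=\omega(x)(\widetilde u,\widetilde v)$. Independence of the lifts is immediate, since changing $\widetilde u$ alters it by a vector of $\ker(T_xp)=(\ker\omega)_x$, which $\omega$ annihilates.

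The main obstacle is to check that this value does not depend on the point $x$ chosen in the fibre $p^{-1}(b)$, that is, in the leaf through $x$. Here I would use that the fibres of $p$ are precisely the leaves, so any two points of a fibre are joined by a composition of local flows $\Phi^X_t$ of vector fields $X$ with values in $\ker\omega$. Each such flow satisfies $p\circ\Phi^X_t=p$, hence $Tp\circ T\Phi^X_t=Tp$, so $T\Phi^X_t$ carries lifts of $u,v$ at one point to lifts at the other; and since ${\mathcal L}(X)\omega=0$ we have $(\Phi^X_t)^*\omega=\omega$, so the value $\omega(x)(\widetilde u,\widetilde v)$ is unchanged. This shows $\omega_r$ is well defined.

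The remaining properties are then formal. Choosing a local smooth section $\sigma$ of $p$ over an open set $U$, one has $\omega_r=\sigma^*\omega$ on $U$, so $\omega_r$ is smooth; and $\d\omega_r=\d(\sigma^*\omega)=\sigma^*(\d\omega)=0$, so $\omega_r$ is closed (equivalently, $p^*\d\omega_r=\d\omega=0$ with $p^*$ injective). By construction $p^*\omega_r=\omega$. Finally, if $\omega_r(b)(u,\,\cdot\,)=0$, then lifting $u$ to $\widetilde u\in T_xM$ gives $\omega(x)(\widetilde u,w)=\omega_r(b)\bigl(u,Tp(w)\bigr)=0$ for every $w\in T_xM$ (each $w$ being a lift of $Tp(w)$), so $\widetilde u\in(\ker\omega)_x=\ker(T_xp)$ and thus $u=Tp(\widetilde u)=0$; hence $\omega_r$ is non-degenerate, \emph{i.e.} symplectic.
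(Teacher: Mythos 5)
Your proof is correct and follows the same route as the paper: Frobenius' theorem for the complete integrability of $\ker\omega$, and descent of $\omega$ through the submersion $p$ for the existence and uniqueness of $\omega_r$. The paper only sketches this (citing Frobenius and asserting that the rest "results from the fact that $M/\ker\omega$ is built by quotienting $M$ by the kernel of $\omega$"), whereas you supply the details — involutivity via $\mathcal L(X)\omega=0$, leafwise invariance via the flows, and non-degeneracy — all of which are accurate.
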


\begin{proof} Since $\d\omega=0$, the fact that $\ker\omega$ is completely integrable is an
immediate consequence of the Frobenius' theorem (\cite{Sternberg1964}, chapter III,
theorem 5.1 page 132). The existence and unicity 
of a symplectic form $\omega_r$ on $M/\ker\omega$ such that $\omega=p^*\omega_r$ results from the
fact that $M/\ker\omega$ is built by quotienting $M$ by the kernel of $\omega$.
\end{proof}

\subsubsection{Presymplectic manifolds in Mechanics}
\label{PresymplecticInMechanics}
Let us go back to the assumptions and notations of \ref{AssumptionsHyperRegularLagrangians}.
We have seen in \ref{PoincareCartanLagrangianAndHamiltonian} that 
the Poincaré-Cartan $1$-form in Hamiltonian formalism $\widehat\varpi_{H_L}$ on $\RR\times T^*N$
and the Poincaré-Cartan $1$-form in Lagrangian formalism $\widehat\varpi_L$ on $\RR\times TN$
are related by
 $$\widehat\varpi_L=(\id_\RR,{\mathcal L}_L)^*\widehat\varpi_{H_L}\,.
 $$
Their exterior differentials $\d\widehat\varpi_L$ and $\d\widehat\varpi_{H_L}$ 
both are \emph{presymplectic $2$-forms} on the odd-dimensional manifolds $\RR\times TN$ and $\RR\times T^*N$,
respectively. At any point of these manifolds, the kernels 
of these closed $2$ forms are one-dimensional. They therefore (\ref{QuotientOfPresymplecticManifold}) 
determine \emph{foliations into smooth curves} of these manifolds. The Euler-Cartan theorem (\ref{EulerCartan})
shows that each of these curves is a possible \emph{motion} of the system, described either in the Lagrangian formalism, 
or in the Hamiltonian formalism, respectively. 
\par\smallskip

The set of all possible motions of the system, called by Jean-Marie Souriau the \emph{manifold of motions}
of the system, is described by the quotient $(\RR\times TN)/\ker\d\widehat\varpi_L$ in the Lagrangian formalism,  
and by the quotient $(\RR\times T^*N)/\ker\d\widehat\varpi_{H_L}$ in the Hamiltonian formalism. 
Both are (maybe non-Hausdorff) \emph{symplectic manifolds}, the projections on these quotient manifolds 
of the presymplectic forms $\d\widehat\varpi_L$ and $\d\widehat\varpi_{H_L}$ both being symplectic forms. Of course
the diffeomorphism $(\id_\RR,{\mathcal L}_L):\RR\times TN\to\RR\times T^*N$ projects onto a symplectomorphism
between the Lagrangian and Hamiltonian descriptions of the manifold of motions of the system.

\subsection{The Hamilton equation}

\begin{prop}\label{HamiltonAndEnergyEquations}
Let $N$ be the configuration manifold of a Lagrangian system whose Lagrangian 
$L:\RR\times TN\to \RR$, maybe time-dependent, is smooth and hyper-regular, and $H_L:\RR\times T^*N\to\RR$
be the associated Hamiltonian (\ref{DefiHamiltonian}). 
Let $\varphi:[t_0,t_1]\to N$ be a smooth curve parametrized by the time $t\in[t_0,t_1]$, and let
$\psi:[t_0,t_1]\to T^*N$ be the parametrized curve in $T^*N$
 $$\psi(t)={\mathcal L}_L\left(t,\frac{\d\gamma(t)}{\d t}\right)\,,\quad t\in[t_0,t_1]\,,
 $$
where ${\mathcal L}_L:\RR\times TN\to T^*N$ is the Legendre map (\ref{DefisLegendreMapEtAl}).
\par\smallskip

The parametrized curve $t\mapsto\gamma(t)$ is a motion of the system if and only if the parametrized 
curve $t\mapsto\psi(t)$ satisfies the equatin, called the Hamilton equation,
 $$\mathi\left(\frac{\d\psi(t)}{\d t}\right)\d\theta_N
    =-\d H_{L\,t}\,,
 $$
where $\displaystyle \d H_{L\,t}=\d H_L-\frac{\partial H_L}{\partial t}\,\d t$ is the differential of the function
$H_{L\,t}:T^*N\to \RR$ in which the time $t$ is considered as a parameter with respect to which there is no
differentiation.
\par\smallskip

When the parametrized curve $\psi$ satisfies the Hamilton equation stated above, it satisfies too the equation,
called the energy eqution
 $$\frac{\d}{\d t}\Bigl(H_L\bigl(t,\psi(t)\bigr)\Bigr)
    =\frac{\partial H_L}{\partial t}\bigl(t,\psi(t)\bigr)\,.
 $$
\end{prop}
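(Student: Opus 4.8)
The plan is to transport the whole problem from the Lagrangian evolution space $\RR\times TN$ to the Hamiltonian evolution space $\RR\times T^*N$ by means of the diffeomorphism $F=(\id_\RR,{\mathcal L}_L)$, and then to read off the Hamilton equation as the coordinate-free statement that a single contraction vanishes. By Remark \ref{PoincareCartanLagrangianAndHamiltonian} we have $\widehat\varpi_L=F^*\widehat\varpi_{H_L}$, hence $\d\widehat\varpi_L=F^*\d\widehat\varpi_{H_L}$. Moreover $F$ carries the curve $c:t\mapsto\bigl(t,\frac{\d\gamma(t)}{\d t}\bigr)$ to the curve $\widetilde c:t\mapsto\bigl(t,\psi(t)\bigr)$, since $\psi(t)={\mathcal L}_L\bigl(t,\frac{\d\gamma(t)}{\d t}\bigr)$ by hypothesis, so $TF$ sends the velocity $c\,'(t)$ to $W:=\widetilde c\,'(t)$. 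Using the identity $\mathi(v)(F^*\beta)=F^*\bigl(\mathi(TF\,v)\beta\bigr)$ together with the injectivity of $F^*$, the Euler-Cartan characterization (Theorem \ref{EulerCartan}) that $c\,'(t)\in\ker\d\widehat\varpi_L$ is equivalent to $\mathi(W)\d\widehat\varpi_{H_L}=0$. This reduces everything to a computation on $\RR\times T^*N$.

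First I would compute the presymplectic form. From $\widehat\varpi_{H_L}=\theta_N-H_L\,\d t$ one gets $\d\widehat\varpi_{H_L}=\d\theta_N-\d H_L\wedge\d t=\d\theta_N-\d H_{L\,t}\wedge\d t$, the last equality because the $\frac{\partial H_L}{\partial t}\,\d t$ part of $\d H_L$ wedges to zero against $\d t$. Along $\widetilde c$ the tangent vector splits as $W=\frac{\partial}{\partial t}+\frac{\d\psi(t)}{\d t}$, where the first summand is the canonical field on the $\RR$ factor and the second is tangent to $T^*N$.

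Then I would contract. Since $\d\theta_N$ is the pull-back to $\RR\times T^*N$ of the canonical symplectic form it annihilates $\frac{\partial}{\partial t}$, and since $\d H_{L\,t}$ has no $\d t$-component, expanding $\mathi(W)\bigl(\d H_{L\,t}\wedge\d t\bigr)$ by the Leibniz rule and using $\langle\d t,W\rangle=1$ gives
$$\mathi(W)\d\widehat\varpi_{H_L}=\Bigl(\mathi\Bigl(\frac{\d\psi(t)}{\d t}\Bigr)\d\theta_N+\d H_{L\,t}\Bigr)-\Bigl\langle\d H_{L\,t},\frac{\d\psi(t)}{\d t}\Bigr\rangle\,\d t\,.$$
The two terms in the parentheses form a $1$-form with no $\d t$-component, while the last term is purely along $\d t$; hence the contraction vanishes if and only if both parts vanish. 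The part with no $\d t$-component is precisely the Hamilton equation $\mathi\bigl(\frac{\d\psi(t)}{\d t}\bigr)\d\theta_N=-\d H_{L\,t}$. The remaining scalar condition $\bigl\langle\d H_{L\,t},\frac{\d\psi(t)}{\d t}\bigr\rangle=0$ is not an extra hypothesis: contracting the Hamilton equation once more with $\frac{\d\psi(t)}{\d t}$ and using the antisymmetry of $\d\theta_N$ shows it holds automatically. This establishes the equivalence, and I expect it to be the only delicate point — keeping the splitting of forms on the product $\RR\times T^*N$ clean and recognizing that the scalar condition is redundant.

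Finally, for the energy equation I would differentiate $t\mapsto H_L\bigl(t,\psi(t)\bigr)=H_L\circ\widetilde c$ directly, obtaining $\frac{\d}{\d t}H_L(t,\psi(t))=\langle\d H_L,W\rangle=\frac{\partial H_L}{\partial t}+\bigl\langle\d H_{L\,t},\frac{\d\psi(t)}{\d t}\bigr\rangle$. When the Hamilton equation holds the second term vanishes by the antisymmetry argument just made, leaving $\frac{\d}{\d t}H_L(t,\psi(t))=\frac{\partial H_L}{\partial t}(t,\psi(t))$, as claimed.
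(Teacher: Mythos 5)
Your proof is correct and follows exactly the route the paper indicates: the paper's own ``proof'' consists of the single sentence that the results follow from the Euler-Cartan theorem~\ref{EulerCartan}, and your argument --- transporting the kernel condition through the Legendre diffeomorphism via Remark~\ref{PoincareCartanLagrangianAndHamiltonian}, expanding $\d\widehat\varpi_{H_L}=\d\theta_N-\d H_{L\,t}\wedge\d t$, and splitting the contraction into its $\d t$-component and its component annihilating $\partial/\partial t$ --- is precisely the computation being left to the reader. Your observation that the scalar condition $\bigl\langle\d H_{L\,t},\frac{\d\psi(t)}{\d t}\bigr\rangle=0$ is redundant by antisymmetry of $\d\theta_N$, and that it simultaneously yields the energy equation, is the right way to close both halves of the statement.
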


\begin{proof}
These results directly follow from the Euler-Cartan theorem (\ref{EulerCartan}).
\end{proof}

\begin{rmks}
The $2$-form $\d\theta_N$ is a symplectic form on the cotangent bundle $T^*N$, called its
\emph{canonical symplectic form}. We have shown that when the Lagrangian $L$ is hyper-regular,
the equations of motion can be written in \emph{three equivalent manners}:

\begin{enumerate} 

\item{} as the \emph{Euler-Lagrange equations} on $\RR\times TM$,

\item{} as the equations given by the \emph{kernels of the presymplectic forms} $\d\widehat\varpi_L$ or 
$\d\widehat\varpi_{H_L}$ which determine the foliations into curves of the evolution spaces $\RR\times TM$
in the Lagrangian formalism, or $\RR\times T^*M$ in the Hamiltonian formalism,

\item{} as the \emph{Hamilton equation} associated to the Hamiltonian $H_L$ on the symplectic manifold
$(T^*N,\d\theta_N)$, often called the \emph{phase space} of the system.
\end{enumerate}
\end{rmks}

\subsubsection{The Tulczyjew isomorphisms}\label{TulczyjewIsomorphisms}
Around 1974, \emph{W.M.~Tulczyjew} \cite{Tulczyjew1974, Tulczyjew1989} discovered~\footnote{$\beta_N$ 
was probably known long before 1974, but I believe that $\alpha_N$, much more hidden, was noticed 
by Tulczyjew for the first time.} 
two remarkable vector bundles isomorphisms
$\alpha_N:TT^*N\to T^*TN$ and $\beta_N:TT^*N\to T^*T^*N$.
\par\smallskip

The first one $\alpha_N$ is an isomorphism
of the bundle $(TT^*N,T\pi_N,TN)$ onto the bundle $(T^*TN,\pi_{TN},TN)$, while the second
$\beta_N$ is an isomorphism of the bundle $(TT^*N,\tau_{T^*N}, T^*N)$ onto the bundle
$(T^*T^*N,\pi_{T^*N},T^*N)$. The diagram below is commutative.
 $$
  \xymatrix{
   {T^*T^*N}\ar[d]_{\pi_{T^*N}}
  &{TT^*N}\ar[l]_{\beta_N} 
           \ar[dl]^{\tau_{T^*N}} 
            \ar[dr]_{T\pi_N} 
             \ar[r]^{\alpha_N}
  &{T^*TN}\ar[d]^{\pi_{TN}}
           \\
   {T^*N}\ar[dr]_{\pi_N}
  &&{TN}\ar[dl]^{\tau_N}
  \\
  &{N}&&
  }
 $$
Since they are the total spaces of cotangent bundles, the manifolds $T^*TN$ and $T^*T^*N$
are endowed with the Liouville $1$-forms $\theta_{TN}$ and $\theta_{T^*N}$,
and with the canonical symplectic forms $\d\theta_{TN}$ and $\d\theta_{T^*N}$, respectively.
Using the isomorphisms $\alpha_N$ and $\beta_N$, we can therefore define on $TT^*N$ 
two $1$-forms $\alpha_N^*\theta_{TN}$ and $\beta_N^*\theta_{T^*N}$, and two symplectic $2$-forms
$\alpha_N^*(\d\theta_{TN})$ and $\beta_N^*(\d\theta_{T^*N})$.
The very remarkable property of the isomorphisms $\alpha_N$ and $\beta_N$ is that 
\emph{the two symplectic forms so obtained on $TT^*N$ are equal}:
 $$\alpha_N^*(\d\theta_{TN})=\beta_N^*(\d\theta_{T^*N})\,.
 $$ 
The $1$-forms $\alpha_N^*\theta_{TN}$ and $\beta_N^*\theta_{T^*N}$ are not equal, their difference is
the differential of a smooth function.
\par\smallskip

\subsubsection{Lagrangian submanifolds}\label{LagrangianSubmanifolds}
In view of applications to implicit Hamiltonian systems, let us recall here that a Lagrangian submanifold
of a symplectic manifold $(M,\omega)$ is a submanifold $N$ whose dimension is half the dimension of $M$,
on which the form induced by the symplectic form $\omega$ is $0$.
\par\smallskip

Let $L:TN\to\RR$ and $H:T^*N\to\RR$ be two smooth real valued functions, defined on $TN$ and on $T^*N$,
respectively.
The graphs $\d L(TN)$ and $\d H(T^*N)$ of their differentials are Lagrangian submanifolds
of the symplectic manifolds $(T^*TN,\d\theta_{TN})$ and $(T^*T^*N,\d\theta_{T^*N})$.
Their pull-backs
$\alpha_N^{-1}\bigl(\d L(TN)\bigr)$ and $\beta_N^{-1}\bigl(\d H(T^*N)\bigr)$ by the symplectomorphisms
$\alpha_N$ and $\beta_N$ are therefore two Lagrangian submanifolds of the manifold $TT^*N$ 
endowed with the  symplectic form $\alpha_N^*(\d\theta_{TN})$, which is equal to the symplectic form
$\beta_N^*(\d\theta_{T^*N})$.
\par\smallskip

The following theorem enlightens some aspects of the relationships between the Hamiltonian and
the Lagrangian formalisms. 

\begin{theo}[W.M.~Tulczyjew]
With the notations specified above (\ref{LagrangianSubmanifolds}),
let $X_H:T^*N\to TT^*N$ be the Hamiltonian vector field on the symplectic manifold $(T^*N,\d\theta_N)$
associated to the Hamiltonian $H:T^*N\to\RR$, defined by
$\mathi(X_H)\d\theta_N=-\d H$. Then
 $$X_H(T^*N)=\beta_N^{-1}\bigl(\d H(T^*N)\bigr)\,.
 $$
Moreover, the equality
 $$\alpha_N^{-1}\bigl(\d L(TN)\bigr)=\beta_N^{-1}\bigl(\d H(T^*N)\bigr)
 $$
holds if and only if the Lagrangian $L$ is hyper-regular and such that
 $$\d H=\d\bigl(E_L\circ{\mathcal L}_L^{-1}\bigr)\,,
 $$
where ${\mathcal L}_L:TN\to T^*N$ is the Legendre map and $E_L:TN\to\RR$ the energy
associated to the Lagrangian $L$.
\end{theo}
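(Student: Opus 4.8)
My plan is to prove the two assertions in turn, using the first to replace the right-hand Lagrangian submanifold by the image of the Hamiltonian vector field, and then comparing the two submanifolds fibre by fibre. For the first assertion I would use that $\beta_N$ is, up to the sign fixed by the conventions adopted here (the sign that makes $\alpha_N^*\d\theta_{TN}=\beta_N^*\d\theta_{T^*N}$ hold together with the chosen Liouville form), nothing but the musical isomorphism $w\mapsto\mathi(w)\d\theta_N$ attached to the canonical symplectic form $\d\theta_N$ on $T^*N$; this is the more elementary of Tulczyjew's two isomorphisms (see \cite{Tulczyjew1974}). Granting this, the defining relation $\mathi(X_H)\d\theta_N=-\d H$ of the Hamiltonian vector field gives, for every $p\in T^*N$, $\beta_N\bigl(X_H(p)\bigr)=\d H(p)$, that is $\beta_N\circ X_H=\d H$. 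Reading this equality of maps $T^*N\to T^*T^*N$ as an equality of their images yields $X_H(T^*N)=\beta_N^{-1}\bigl(\d H(T^*N)\bigr)$, which is the first claim; its entire content is the identification of $\beta_N$ with the symplectic flat map.

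For the second assertion I would first make the left-hand submanifold explicit. Recalling the coordinate expression of $\alpha_N$ (\cite{Tulczyjew1974}), in canonical coordinates $(x^i,p_i,\dot x^i,\dot p_i)$ on $TT^*N$ the set $\alpha_N^{-1}\bigl(\d L(TN)\bigr)$ is defined by $p_i=\frac{\partial L}{\partial v^i}(x,\dot x)$ and $\dot p_i=\frac{\partial L}{\partial x^i}(x,\dot x)$, and is parametrized by $(x^i,\dot x^i)\in TN$. The first family of equations says precisely that the base point in $T^*N$ of such a point is $\mathcal{L}_L(x,\dot x)$, so that the restriction to $\alpha_N^{-1}\bigl(\d L(TN)\bigr)$ of the projection $\tau_{T^*N}$ is, through this parametrization, exactly the Legendre map $\mathcal{L}_L$. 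By the first assertion the right-hand submanifold $\beta_N^{-1}\bigl(\d H(T^*N)\bigr)=X_H(T^*N)$ is the image of a global section of $\tau_{T^*N}$. A submanifold can coincide with the image of a section of $\tau_{T^*N}$ only if it is itself the image of a section, i.e. only if $\tau_{T^*N}$ restricts to a diffeomorphism onto $T^*N$; by the previous sentence this means exactly that $\mathcal{L}_L$ is a diffeomorphism of $TN$ onto $T^*N$. Hence equality of the two submanifolds forces $L$ to be \emph{hyper-regular}, and conversely hyper-regularity is what lets the left-hand set be rewritten as a section over $T^*N$.

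Assuming hyper-regularity, I would then compare the two sections fibre by fibre. Writing $v=\mathcal{L}_L^{-1}(x,p)$, the left-hand section has fibre components $\dot x^i=v^i(x,p)$ and $\dot p_i=\frac{\partial L}{\partial x^i}\bigl(x,v(x,p)\bigr)$, whereas $X_H$ has components $\dot x^i=\frac{\partial H}{\partial p_i}$ and $\dot p_i=-\frac{\partial H}{\partial x^i}$. The two sections therefore coincide if and only if $\frac{\partial H}{\partial p_i}=v^i$ and $\frac{\partial H}{\partial x^i}=-\frac{\partial L}{\partial x^i}$. The expected main obstacle, and the real heart of the proof, is to recognise these as the statement $\d H=\d\bigl(E_L\circ\mathcal{L}_L^{-1}\bigr)$. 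I would settle this by the chain rule: differentiating $E_L\circ\mathcal{L}_L^{-1}(x,p)=p_j\,v^j(x,p)-L\bigl(x,v(x,p)\bigr)$, all terms containing $\partial v^j/\partial p_i$ or $\partial v^j/\partial x^i$ cancel in pairs because $p_j=\partial L/\partial v^j$ along the Legendre map, leaving exactly $\frac{\partial}{\partial p_i}\bigl(E_L\circ\mathcal{L}_L^{-1}\bigr)=v^i$ and $\frac{\partial}{\partial x^i}\bigl(E_L\circ\mathcal{L}_L^{-1}\bigr)=-\frac{\partial L}{\partial x^i}$. Equality of the two sections is thus equivalent to $\d\bigl(H-E_L\circ\mathcal{L}_L^{-1}\bigr)=0$, i.e. to $\d H=\d\bigl(E_L\circ\mathcal{L}_L^{-1}\bigr)$, which establishes both directions of the equivalence. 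A coordinate-free rendering is available — one checks that $\beta_N\circ\alpha_N^{-1}$ is a symplectomorphism $T^*TN\to T^*T^*N$ carrying $\d L(TN)$ onto $\d H(T^*N)$ precisely under this Legendre relation — but the computation just sketched is the most transparent route.
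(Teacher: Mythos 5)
Your argument is correct. Note that the paper itself does not prove this theorem --- it only refers the reader to Tulczyjew's original works \cite{Tulczyjew1974, Tulczyjew1989} --- so there is no in-paper proof to compare against; your proof is essentially the standard one. The two key inputs are exactly the right ones: (i) the identification of $\beta_N$ with the flat map $w\mapsto\pm\,\mathi(w)\d\theta_N$ of the canonical symplectic form, which immediately turns the defining relation $\mathi(X_H)\d\theta_N=-\d H$ into $\beta_N\circ X_H=\d H$ and hence gives the first claim (you are right to flag that the sign in the definition of $\beta_N$ must be the one compatible with the convention $\mathi(X_H)\d\theta_N=-\d H$, otherwise the statement would read $X_H(T^*N)=\beta_N^{-1}\bigl(-\d H(T^*N)\bigr)$); and (ii) the coordinate description of $\alpha_N^{-1}\bigl(\d L(TN)\bigr)$ by $p_i=\partial L/\partial v^i$, $\dot p_i=\partial L/\partial x^i$, from which the restriction of $\tau_{T^*N}$ to that submanifold is the Legendre map, so that equality with the image of the section $X_H$ forces ${\mathcal L}_L$ to be a diffeomorphism, \emph{i.e.} hyper-regularity. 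The final chain-rule computation showing that $\partial(E_L\circ{\mathcal L}_L^{-1})/\partial p_i=v^i$ and $\partial(E_L\circ{\mathcal L}_L^{-1})/\partial x^i=-\partial L/\partial x^i$ (the derivatives of $v$ cancelling because $p_j=\partial L/\partial v^j$ along the Legendre map) is exactly the classical Legendre-transform identity, and it correctly reduces the coincidence of the two sections to $\d H=\d\bigl(E_L\circ{\mathcal L}_L^{-1}\bigr)$. Both directions of the equivalence are thereby established; I see no gap.
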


The interested reader will find the proof of that theorem in the works of W. Tulczyjew 
(\cite{Tulczyjew1974, Tulczyjew1989}).
\par\smallskip

When $L$ is not hyper-regular, $\alpha_N^{-1}\bigl(\d L(TN)\bigr)$ still is a Lagrangian submanifold
of the symplectic manifold $\bigl(TT^*N,\alpha_N^*(\d\theta_{TN})\bigr)$, but it is no more the graph
of a smooth vector field $X_H$ defined on $T^*N$. Tulczyjew proposes to consider this Lagrangian
submanifold as an \emph{implicit Hamilton equation} on $T^*N$.
\par\smallskip

These results can be extended to Lagrangians and Hamiltonians which may depend on time.

\subsection{The Hamiltonian formalism on symplectic and Poisson manifolds}\label{HamiltonOnSimplecticAndPoisson}

\subsubsection{The Hamilton formalism on symplectic manifolds}
\label{HamiltonianFormalismOnSymplecticManifolds}
In pure mathematics as well as in applications of mathematics to Mechanics and Physics, 
symplectic manifolds other than cotangent bundles are encountered. A theorem due to the
french mathematician \emph{Gaston Darboux} (1842--1917) asserts that any symplectic manifold 
$(M,\omega)$ is of even dimension $2n$ and is locally isomorphic to the cotangent bundle to a 
$n$-dimensional manifold: 
in a neighbourhood of each of its point there exist local coordinates
$(x^1,\ldots,x^n,p_1,\ldots,p_n)$, called \emph{Darboux coordinates} with which the symplectic form $\omega$ is expressed exactly as the canonical symplectic form of a cotangent bundle:
 $$\omega=\sum_{i=1}^n\d p_i\wedge\d x^i\,.
 $$ 
Let $(M,\omega)$ be a symplectic manifold and $H:\RR\times M\to\RR$ a smooth function, said to be a
\emph{time-dependent Hamiltonian}. It determines a \emph{time-dependent Hamiltonian vector field}
$X_H$ on $M$, such that
 $$\mathi(X_H)\omega=-\d H_t\,,
 $$
$H_t:M\to\RR$ being the function $H$ in which the variable $t$ is considered as a parameter 
with respect to which no differentiation is made. 
\par\smallskip

The \emph{Hamilton equation} determined by $H$ is the differential equation
 $$\frac{\d \psi(t)}{\d t}= X_H\bigl(t,\psi(t)\bigr)\,.
 $$
The Hamiltonian formalism can therefore be applied to any smooth, maybe time dependent Hamiltonian on $M$,
even when there is no associated Lagrangian. 
\par\smallskip

The Hamiltonian formalism is not limited to symplectic manifolds: it can be applied, for example, to 
\emph{Poisson manifolds} \cite{Lichnerowicz77},
\emph{contact manifolds} and \emph{Jacobi manifolds}
\cite{Lichnerowicz79}. For simplicity
I will consider only Poisson manifolds. Readers interested in Jacobi manifolds and their generalizations are referred to the papers by A.~Lichnerowicz quoted above and to the very
important paper by A.~Kirillov \cite{Kirillov76}.

\begin{defi}\label{DefiPoissonManifold} 
A \emph{Poisson manifold} is a smooth manifold $P$ whose algebra of smooth functions
$C^\infty(P,\RR)$ is endowed with a bilinear composition law, called the \emph{Poisson bracket}, 
which associates to any pair $(f,g)$ of smooth functions on $P$ another smooth function 
denoted by $\{f,g\}$, that composition satisfying the three properties
\begin{enumerate}

\item{} it is skew-symmetric,\\
\centerline{$\displaystyle\{g,f\}=-\{f,g\}$,}

\item{} it satisfies the \emph{Jacobi identity}\\
\centerline{ $\displaystyle\bigl\{f,\{g,h\}\bigr\}+\bigl\{g,\{h,f\}\bigr\}+\bigl\{h,\{f,g\}\bigr\}=0$,}   

\item{} it satisfies the Leibniz identity\\
\centerline{ $\displaystyle \{f,gh\}=\{f,g\}h+g\{f,h\}$.}
\end{enumerate}
\end{defi}

\begin{exs}\label{ExamplesPoissonManifolds}\hfill
\par\noindent
{\rm 1.\quad} On the vector space of smooth functions defined on a symplectic manifold
$(M,\omega)$, there exists a composition law, called the \emph{Poisson bracket}, which satisfies 
the properties stated in \ref{DefiPoissonManifold}. Let us recall briefly its definition. The symplectic form
$\omega$ allows us to associate, to any smooth function $f\in C^\infty(M,\RR)$, a smooth vector field 
$X_f\in A^1(M,\RR)$, called the \emph{Hamiltonian vector field associated to $f$}, defined by
 $$\mathi(X_f)\omega=-\d f\,.
 $$ 
The Poisson bracket $\{f,g\}$ of two smooth functions $f$ and $g\in C^\infty(M,\RR)$
is defined by the three equivalent equalities
 $$\{f,g\}=\mathi(X_f)\d g=-\mathi(X_g)\d f=\omega(X_f,X_g)\,.
 $$
Any symplectic manifold is therefore a Poisson manifold.
\par\smallskip

The Poisson bracket of smooth functions defined on a symplectic manifold (when that symplectic manifold is a cotangent bundle) was discovered by Siméon Denis Poisson
(1781--1840) \cite{Poisson2}.

\par\smallskip\noindent
{\rm 2.\quad} Let $\mathcal G$ be a finite-dimensional real Lie algebra, and let ${\mathcal G}^*$
be its dual vector space. For each smooth function
$f\in C^\infty({\mathcal G}^*,\RR)$ and each $\zeta\in{\mathcal G}^*$, the differential $\d f(\zeta)$ 
is a linear form on ${\mathcal G}^*$, in other words an element of the dual vector space of ${\mathcal G}^*$.
Identifying with $\mathcal G$ the dual vector space of ${\mathcal G}^*$, we can therefore consider
$\d f(\zeta)$ as an element in $\mathcal G$. With this identification, we can define the Poisson bracket
of two smooth functions $f$ and $g\in C^\infty({\mathcal G}^*,\RR$ by
 $$\{f,g\}(\zeta)=\bigl[\d f(\zeta),\d g(\zeta)\bigr]\,,\quad \zeta\in{\mathcal G}^*\,,
 $$ 
the bracket in the right hand side being the bracket in the Lie algebra $\mathcal G$.
The Poisson bracket of functions in $C^\infty({\mathcal G}^*,\RR)$ so defined satifies the properties
stated in \ref{DefiPoissonManifold}. The dual vector space of any finite-dimensional real Lie algebra is therefore
endowed with a Poisson structure, called its \emph{canonical Lie-Poisson structure} or its
\emph{Kirillov-Kostant-Souriau Poisson structure}. Discovered by Sophus Lie, this structure was indeed rediscovered 
independently by Alexander Kirillov, Bertram Kostant and Jean-Marie Souriau. 

\par\smallskip\noindent
{\rm 3.\quad} A \emph{symplectic cocycle} of a finite-dimensional, real Lie algebra $\mathcal G$ 
is a skew-symmetric bilinear map $\Theta:{\mathcal G}\times{\mathcal G}\to{\mathcal G}^*$ which 
satisfies, for all $X$, $Y$ and $Z\in{\mathcal G}$, 
 $$\Theta\bigl([X,Y],Z\bigr) + \Theta\bigl([Y,Z],X\bigr) + \Theta\bigl([Z,X],Y\bigr)=0\,.
 $$
The canonical Lie-Poisson bracket of two smooth functions $f$ and $g\in C^\infty({\mathcal G}^*,\RR)$
can be modified by means of the symplectic cocycle $\Theta$, by setting
 $$\{f,g\}_\Theta(\zeta)=\bigl[\d f(\zeta),\d g(\zeta)\bigr] 
  - \Theta\bigl(\d f(\zeta),\d g(\zeta)\bigr)\,,\quad \zeta\in{\mathcal G}^*\,.
 $$
This bracket still satifies the properties
stated in \ref{DefiPoissonManifold}, therefore defines on ${\mathcal G}^*$ a Poisson structure called its
\emph{canonical Lie-Poisson structure modified by $\Theta$}.
\end{exs}

\subsubsection{Properties of Poisson manifolds}\label{PropertiesPoissonManifolds}
The interested reader will find the proofs of the properties recalled here
in \cite{Vaisman94}, \cite{LibermannMarle1987}, \cite{LaurentPichereauVanhaecke} or \cite{OrtegaRatiu2004}.

\par\smallskip\noindent
{\rm 1.\quad}
On a Poisson manifold $P$, the Poisson bracket $\{f,g\}$ of two smooth functions $f$and $g$
can be expressed by means of a smooth field of bivectors $\Lambda$:
 $$\{f,g\}=\Lambda(\d f,\d g)\,,\quad f\ \hbox{and}\ g\in C^\infty(P,\RR)\,,
 $$
called the \emph{Poisson bivector field} of $P$. The considered Poisson manifold is often
denoted by $(P,\Lambda)$. The Poisson bivector field $\Lambda$ identically satisfies
 $$[\Lambda,\Lambda]=0\,,
 $$
the bracket $[\ ,\ ]$ in the left hand side being the \emph{Schouten-Nijenhuis bracket}.
That bivector field
determines a vector bundle morphism $\Lambda^\sharp:T^*P\to TP$, defined by
 $$\Lambda(\eta,\zeta)=\bigl\langle\zeta,\Lambda^\sharp(\eta)\bigr\rangle\,,
 $$
where $\eta$ and $\zeta\in T^*P$ are two covectors attached to the same point in $P$.
\par\smallskip

Readers interested in the Schouten-Nijenhuis bracket will find thorough presentations of its properties in \cite{Koszul85} or \cite{Marle2008}.

\par\smallskip\noindent
{\rm 2.\quad}
Let $(P,\Lambda)$ be a Poisson manifold. A (maybe time-dependent) vector field on $P$ can be
associated to each (maybe time-dependent) smooth function $H:\RR\times P\to \RR$. It is called 
the \emph{Hamiltonian vector field} associated to the \emph{Hamiltonian} $H$, and denoted by 
$X_H$. 
Its expression is
 $$X_H(t,x)=\Lambda^\sharp(x)\bigl(\d H_t(x)\bigr)\,,
 $$
where $\displaystyle \d H_t(x)=\d H(t,x)-\frac{\partial H(t,x)}{\partial t}\d t$ is the differential
of the function deduced from $H$ by considering $t$ as a parameter with respect to which no differentiation
is made.  
\par\smallskip

The \emph{Hamilton equation} determined by the (maybe time-dependent) Hamiltonian $H$ is
 $$\frac{\d\varphi(t)}{\d t}=X_H(\bigl(t,\varphi(t)\bigr)
                            =\Lambda^\sharp(\d H_t)\bigl(\varphi(t)\bigr)\,.
 $$
\par\smallskip\noindent
{\rm 3.\quad} Any Poisson manifold is foliated, by a generalized foliation whose leaves
may not be all of the same dimension, into immersed connected symplectic manifolds called
the \emph{symplectic leaves} of the Poisson manifold. The value, at any point of a Poisson
manifold, of the Poisson bracket of two smooth functions only depends on the restrictions of these functions to the symplectic leaf through the considered  point, and can be calculated
as the Poisson bracket of functions defined on that leaf,  with the Poisson structure associated to the symplectic structure of that leaf. This property was discovered by 
Alan Weinstein, in his very thorough study of the local structure of Poisson manifolds
\cite{Weinstein83}.
   
\section{Hamiltonian symmetries}\label{HamiltonianSymmetries}

\subsection{Presymplectic, symplectic and Poisson maps and vector fields}

Let $M$ be a manifold endowed with some structure, which can be either 

\begin{itemize}
 
\item{} a \emph{presymplectic structure}, determined by a presymplectic form, \emph{i.e.}, a $2$-form $\omega$
which is closed ($\d\omega=0$),

\item{} a \emph{symplectic structure}, determined by a symplectic form $\omega$, \emph{i.e.}, a $2$-form 
$\omega$ which is both  closed ($\d\omega=0$) and nondegenerate ($\ker\omega=\{0\}$),

\item{} a \emph{Poisson structure}, determined by a smooth Poisson bivector field $\Lambda$ satisfying $[\Lambda,\Lambda]=0$.

\end{itemize}

\begin{defi} A \emph{presymplectic} (resp. \emph{symplectic}, resp. \emph {Poisson}) diffeomorphism of a
presymplectic (resp., symplectic, resp. Poisson) manifold $(M,\omega)$ (resp. $(M,\Lambda)$) is a 
smooth diffeomorphism $f:M\to M$ such that $f^*\omega=\omega$ (resp. $f^*\Lambda=\Lambda$).
\end{defi}

\begin{defi} A smooth vector field $X$ on a presymplectic (resp. symplectic, resp. Poisson) manifold
$(M,\omega)$ (resp. $(M,\Lambda)$) is said to be a \emph{presysmplectic} (resp. \emph{symplectic},
resp. \emph{Poisson}) vector field if ${\mathcal L}(X)\omega=0$ (resp. if ${\mathcal L}(X)\Lambda=0$),
where ${\mathcal L}(X)$ denotes the Lie derivative of forms or mutivector fields with respect to $X$.
\end{defi}

\begin{defi} Let $(M,\omega)$ be a presymplectic or symplectic manifold. A smooth vector field $X$ on 
$M$ is said to be \emph{Hamiltonian} if there exists a smooth function $H:M\to\RR$, called a
\emph{Hamiltonian} for $X$, such that
 $$\mathi(X)\omega=-\d H\,.
 $$
\end{defi}
Not any smooth function on a presymplectic manifold can be a Hamiltonian.

\begin{defi} Let $(M,\Lambda)$ be a Poisson manifold. A smooth vector field $X$ on $M$ is said to be
\emph{Hamiltonian} if there exists a smooth function $H\in C^\infty(M,\RR)$, 
called a \emph{Hamiltonian} for $X$,
such that $X=\Lambda^\sharp(\d H)$. An equivalent definition is that 
 $$\mathi(X)\d g=\{H,g\}\quad\hbox{for any}\ g\in C^\infty(M,\RR)\,,
 $$
where $\{H,g\}=\Lambda(\d H,\d g)$ denotes the Poisson bracket of the functions $H$ and $g$.
\end{defi}

On a symplectic or a Poisson manifold, any smooth function can be a Hamiltonian.

\begin{prop}\label{HamiltonImpliesSymplectic} 
A Hamiltonian vector field on a presymplectic (resp. symplectic, resp. Poisson) manifold
automatically is a presymplectic (resp. symplectic, resp. Poisson) vector field.
\end{prop}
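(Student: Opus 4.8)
The plan is to treat the cases separately, since the presymplectic and symplectic manifolds are handled by one and the same argument while the Poisson case calls for a different tool. For the presymplectic (hence also symplectic) case I would invoke the Cartan homotopy formula ${\mathcal L}(X)=\mathi(X)\circ\d+\d\circ\mathi(X)$, already used in the proof of Theorem \ref{TheoremNoetherLagrange}. Applying it to $\omega$ gives ${\mathcal L}(X)\omega=\mathi(X)\d\omega+\d\bigl(\mathi(X)\omega\bigr)$. The first term vanishes because $\d\omega=0$ by the definition of a presymplectic form, and since $X$ is Hamiltonian we have $\mathi(X)\omega=-\d H$, so the second term equals $\d(-\d H)=0$. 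Hence ${\mathcal L}(X)\omega=0$, which is exactly the assertion that $X$ is presymplectic (resp. symplectic). This part is immediate and needs no further work.

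For the Poisson case the plan is first to translate the conclusion ${\mathcal L}(X)\Lambda=0$ into a statement about the Poisson bracket, and only then to bring in that $X$ is Hamiltonian. For an arbitrary smooth vector field $X$ and any pair of smooth functions $f,g$, I would expand ${\mathcal L}(X)\bigl(\Lambda(\d f,\d g)\bigr)$ by the Leibniz rule, using that the Lie derivative commutes with $\d$ (so ${\mathcal L}(X)\d f=\d(Xf)$), to obtain
$$X\{f,g\}=\bigl({\mathcal L}(X)\Lambda\bigr)(\d f,\d g)+\{Xf,g\}+\{f,Xg\}\,.$$
Since the differentials $\d f$ span each cotangent space of $P$, this identity shows that ${\mathcal L}(X)\Lambda=0$ holds if and only if $X\{f,g\}=\{Xf,g\}+\{f,Xg\}$ for all $f,g$, i.e. if and only if $X$ is a derivation of the Poisson bracket.

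It then remains to specialize to a Hamiltonian field $X=X_H$, for which the equivalent characterization in the definition of a Hamiltonian vector field on a Poisson manifold gives $X_H f=\{H,f\}$. The required derivation identity $\{H,\{f,g\}\}=\{\{H,f\},g\}+\{f,\{H,g\}\}$ is then nothing but a rearrangement of the Jacobi identity of Definition \ref{DefiPoissonManifold}, using the skew-symmetry of the bracket, which completes the argument. The only genuinely substantive point — the main obstacle — is recognizing that the Poisson case reduces precisely to the Jacobi identity. Equivalently one could argue through the Schouten-Nijenhuis bracket, noting that ${\mathcal L}(X_H)\Lambda=[X_H,\Lambda]$ and that the integrability condition $[\Lambda,\Lambda]=0$ forces this to vanish via the graded Jacobi identity; but the direct computation above is more elementary and self-contained, relying only on notions already introduced in the excerpt.
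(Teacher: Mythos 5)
Your proof is correct. The paper does not actually write out a proof of this proposition --- it only remarks that the result is easy and refers the reader to standard textbooks --- and your argument is precisely the standard one those references contain: the Cartan formula kills ${\mathcal L}(X)\omega$ in the presymplectic and symplectic cases, and in the Poisson case the Leibniz expansion of ${\mathcal L}(X)\bigl(\Lambda(\d f,\d g)\bigr)$ reduces the claim to the Jacobi identity. Nothing is missing.
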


The proof of this result, which is easy, can be found in any book on symplectic and Poisson geoemetry, for example
\cite{LibermannMarle1987}, \cite{LaurentPichereauVanhaecke} or \cite{OrtegaRatiu2004}.  

\subsection{Lie algebras and Lie groups actions}

\begin{defis}\label{DefiActionLieGroupLieAlgebra}
An \emph{action on the left} (resp. an \emph{action on the right})
of a Lie group $G$ on a smooth manifold $M$ is a smooth map $\Phi:G\times M\to M$ (resp. a smooth map
$\Psi:M\times G\to M$) such that
 
\begin{itemize}

\item{}
for each fixed $g\in G$, the map $\Phi_g:M\to M$ defined by
$\Phi_g(x)=\Phi(g,x)$ (resp. the map $\Psi_g:M\to M$ defined by $\Psi_g(x)=\Psi(x,g)$) is a 
smooth diffeomorphism of $M$,

\item{}
$\Phi_e=\id_M$ (resp. $\Psi_e=\id_M$), $e$ being the neutral element of $G$,

\item{}
for each pair $(g_1,g_2)\in G\times G$, $\Phi_{g_1}\circ\Phi_{g_2}=\Phi_{g_1g_2}$ 
(resp. $\Psi_{g_1}\circ\Psi_{g_2}=\Psi_{g_2g_1}$).
\end{itemize}
\par\smallskip

An \emph{action} of a Lie algebra $\mathcal G$ on a smooth manifold $M$ is a 
\emph{Lie algebras morphism} of $\mathcal G$ into the Lie algebra $A^1(M)$ of smooth vector fields on $M$,
\emph{i.e.} a linear map
$\psi:{\mathcal G}\to A^1(M)$ which associates to each $X\in{\mathcal G}$ a smooth vector field $\psi(X)$
on $M$ such that for each pair $(X,Y)\in{\mathcal G}\times{\mathcal G}$,
$\psi\bigl([X,Y]\bigr)=\bigl[\psi(X),\psi(Y)\bigr]$.
\end{defis}

\begin{prop}\label{LieAlgebraActionDeterminedByLieGroupAction}
An action $\Psi$, either on the left or on the right, of a Lie group $G$ on a smooth manifold 
$M$, automatically determines an action $\psi$ of its Lie algebra $\mathcal G$ on that manifold, which 
associates to each $X\in{\mathcal G}$ the vector field $\psi(X)$ on $M$, often denoted by $X_M$ and
called the fundamental vector field on $M$ associated to $X$. It is defined by
 $$\psi(X)(x)=X_M(x)=\frac{\d}{\d s}\bigl(\Psi_{\exp(sX)}(x)\bigr)\bigm|_{s=0}\,,\quad x\in M\,,
 $$
with the following convention: $\psi$ is a Lie algebras homomorphism when we take for Lie algebra 
$\mathcal G$ of the Lie group $G$ the Lie algebra or \emph{right invariant} vector fields on $G$ if $\Psi$
is an action on the left, and the Lie algebra of \emph{left invariant} vector fields on $G$ if $\Psi$
is an action on the right. 
\end{prop}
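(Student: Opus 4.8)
The plan is to treat the two assertions — that $\psi$ is well defined and that it is a Lie algebras morphism for the stated invariance convention — separately, and to reduce the bracket identity to a computation with flows. First I would fix $x\in M$ and consider the \emph{orbit map} $\Psi^x:G\to M$, $g\mapsto\Psi_g(x)$, which is smooth because $\Psi$ is. Identifying $\mathcal G$ with $T_eG$ through the value at the neutral element $e$ of the invariant vector fields, the definition reads $\psi(X)(x)=T_e\Psi^x(X_e)$, so that $X\mapsto\psi(X)(x)$ is the \emph{linear} map $T_e\Psi^x$; smoothness in $x$ again follows from smoothness of $\Psi$. Thus $\psi$ is a well-defined linear map $\mathcal G\to A^1(M)$, and everything reduces to the bracket relation $\psi([X,Y])=[\psi(X),\psi(Y)]$.

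The key preliminary step is to identify the flow of the fundamental vector field. Treating first a right action (composition rule $\Psi_{g_1}\circ\Psi_{g_2}=\Psi_{g_2g_1}$), I would use the one-parameter subgroup relation $\exp(sX)=\exp(s_0X)\exp\bigl((s-s_0)X\bigr)$ together with the composition rule to show that for each $x$ the curve $s\mapsto\Psi_{\exp(sX)}(x)$ is an integral curve of $\psi(X)$. Hence $\psi(X)$ is complete with reduced flow $\Phi^{\psi(X)}_s=\Psi_{\exp(sX)}$. Next I would compute how $\psi(Y)$ transforms under the diffeomorphism $\Psi_g$: writing out $\Psi_g\circ\Psi_{\exp(tY)}\circ\Psi_{g^{-1}}$ and applying the composition rule collapses the conjugation to $\Psi_{g^{-1}\exp(tY)g}=\Psi_{\exp(t\,\Ad_{g^{-1}}Y)}$, whence $(\Psi_g)_*\psi(Y)=\psi(\Ad_{g^{-1}}Y)$. (For a left action the analogous computation gives instead $(\Phi_g)_*\psi(Y)=\psi(\Ad_g Y)$.)

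With these two facts in hand I would invoke the standard identity expressing the Lie bracket of vector fields as a Lie derivative via the flow, ${\mathcal L}(\psi(X))\psi(Y)=\frac{\d}{\d s}\bigl((\Phi^{\psi(X)}_{-s})_*\psi(Y)\bigr)\big|_{s=0}=[\psi(X),\psi(Y)]$. Substituting the flow from the first step and the transformation rule from the second (with $g=\exp(-sX)$, so that $g^{-1}=\exp(sX)$) gives, for the right action, $[\psi(X),\psi(Y)]=\frac{\d}{\d s}\psi\bigl(\Ad_{\exp(sX)}Y\bigr)\big|_{s=0}$. Pulling the derivative through the linear map $\psi$ and using $\frac{\d}{\d s}\Ad_{\exp(sX)}Y\big|_{s=0}=\ad_XY$ yields $[\psi(X),\psi(Y)]=\psi(\ad_XY)$.

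Finally I would tie the result to the invariance convention. The point is that $\ad_XY=\frac{\d}{\d s}\Ad_{\exp(sX)}Y\big|_{s=0}$ equals the bracket $[X,Y]$ computed with the \emph{left-invariant} vector fields, so for a right action $\psi$ is genuinely a homomorphism with that convention. For a left action the same scheme produces $(\Phi_g)_*\psi(Y)=\psi(\Ad_gY)$ and hence the opposite sign, $[\psi(X),\psi(Y)]=-\psi(\ad_XY)$; switching to the \emph{right-invariant} vector fields negates the bracket of $\mathcal G$ and restores the homomorphism property, exactly as stated. The main obstacle is not any single computation but the careful sign bookkeeping: one must track how the composition rule of the action, the inversion appearing in the conjugation, and the choice of left- versus right-invariant bracket each contribute a sign, and then verify that precisely the stated pairing (left action with right-invariant, right action with left-invariant) makes these signs cancel so that $\psi$ preserves, rather than reverses, the bracket.
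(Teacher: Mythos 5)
Your proof is correct, but it takes a genuinely different route from the paper's. The paper's argument is a one-liner built on $f$-relatedness: for a left action, the \emph{right-invariant} vector field on $G$ with value $X$ at $e$ and the fundamental vector field $X_M$ are related by the orbit map $g\mapsto\Psi_g(x)$ (and symmetrically for a right action with left-invariant fields); since $f$-related vector fields have $f$-related brackets, evaluating at $e$ immediately gives $\psi([X,Y])=[\psi(X),\psi(Y)]$ with the stated bracket convention, with no flows and no adjoint representation appearing at all. Your argument instead identifies the flow of $\psi(X)$ as $\Psi_{\exp(sX)}$, establishes the equivariance $(\Psi_g)_*\psi(Y)=\psi(\Ad_{g^{-1}}Y)$ (resp.\ $\psi(\Ad_gY)$), and differentiates via $\mathcal{L}\bigl(\psi(X)\bigr)\psi(Y)=[\psi(X),\psi(Y)]$. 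This is longer and demands the sign bookkeeping you describe, but it has the advantage of producing, as byproducts, exactly the two facts the paper needs later anyway: the flow of a fundamental vector field (used in \ref{InvarianceOfGibbsState} and \ref{PartialInvarianceOfGeneralizedGibbsState}) and the statement that $(\Phi_{g^{-1}})_*(X_M)$ is the fundamental field of $\Ad_{g^{-1}}(X)$ (used in the proof of Souriau's equivariance theorem \ref{SouriauEquivarianceTheorem}). The paper's route is the more economical proof of this particular proposition; yours packages more reusable information. Both correctly land on the pairing of left actions with right-invariant brackets and right actions with left-invariant brackets.
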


\begin{proof}
If $\Psi$ is an action of $G$ on $M$ on the left (respectively, on the right), 
the vector field on $G$ which is right invariant (respectively, left invariant) and whose value
at $e$ is $X$, and the associated fundamental vector field $X_M$ on $M$, are compatible by the map 
$g\mapsto\Psi_g(x)$. Therefore the map $\psi:{\mathcal G}\to A^1(M)$ is a Lie algebras homomorphism, 
if we take for definition of the bracket on $\mathcal G$ the bracket of right invariant 
(respectively, left invariant) vector fields on $G$.
\end{proof}

\begin{defis}\label{PresymplecticOrPoissonAction}
When $M$ is a presymplectic (or a symplectic, or a Poisson) manifold, an action 
$\Psi$ of a Lie group $G$ (respectively, an action $\psi$ of a Lie algebra 
$\mathcal G$) on the manifold $M$ 
is called a presymplectic (or a symplectic, or a Poisson) 
action if for each $g\in G$, $\Psi_g$ is a presymplectic, or a symplectic, or a Poisson 
diffeomorphism of $M$ (respectively, if for each $X\in{\mathcal G}$, $\psi(X)$ is a presymplectic,
or a symplectic, or a Poisson
vector field on $M$.
\end{defis} 

\begin{defis}\label{DefisHamiltonianAction} 
An action $\psi$ of a Lie algeba $\mathcal G$ on a presymplectic or symplectic manifold
$(M,\omega)$, or on a Poisson manifold $(M,\Lambda)$, is said to be \emph{Hamiltonian} if for each 
$X\in{\mathcal G}$, the vector field $\psi(X)$ on $M$ is Hamiltonian.
\par\smallskip  

An action $\Psi$ (either on the left or on the right) of a Lie group $G$ on a presymplectic or symplectic manifold $(M,\omega)$, or on a Poisson manifold $(M,\Lambda)$, is said to be \emph{Hamiltonian} if that action is presymplectic, or symplectic, or Poisson (according to the structure of $M$), and if in addition the associated action of the Lie algebra $\mathcal G$ of $G$ is Hamiltonian.
\end{defis}   

\begin{rmk} A Hamiltonian action of a Lie group, or of a Lie algebra, on a presymplectic, symplectic or Poisson manifold,
is automatically a presymplectic, symplectic or Poisson action. This result immediately follows from 
\ref{HamiltonImpliesSymplectic}
\end{rmk}

\subsection{Momentum maps of Hamiltonian actions}

\begin{prop}\label{ExistenceMomentum} 
Let $\psi$ be a Hamiltonian action of a finite-dimensional Lie algebra $\mathcal G$
on a presymplectic, symplectic or Poisson manifold $(M,\omega)$ or $(M,\Lambda)$. There exists 
a smooth map $J:M\to{\mathcal G}^*$, taking its values in the dual space ${\mathcal G}^*$ of the Lie algebra
$\mathcal G$, such that for each $X\in {\mathcal G}$ the Hamiltonian vector field $\psi(X)$ on $M$ admits 
as Hamiltonian the function $J_X:M\to \RR$, defined by
 $$J_X(x)=\bigl\langle J(x),X\bigr\rangle\,,\quad x\in M\,.
 $$
The map $J$ is called a \emph{momentum map} for the Lie algebra action $\psi$. When $\psi$ is the action of the Lie algebra $\mathcal G$ of a Lie group $G$ associated to a Hamiltonian action $\Psi$ of a Lie group
$G$, $J$ is called a \emph{momentum map} for the Hamiltonian Lie group action $\Psi$.  
\end{prop}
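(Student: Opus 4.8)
The plan is to reduce the existence of a global momentum map to the defining property of a Hamiltonian action together with the linearity of $\psi$. By Definition~\ref{DefisHamiltonianAction}, for each $X\in\mathcal G$ the vector field $\psi(X)$ is Hamiltonian, so a Hamiltonian function for it exists; the only thing to arrange is that these Hamiltonians can be chosen to depend \emph{linearly} on $X$, for then the assignment $X\mapsto H_X(x)$ is, for each fixed $x$, a linear form on $\mathcal G$ and hence defines an element $J(x)\in\mathcal G^*$.

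First I would fix a basis $(e_1,\dots,e_n)$ of $\mathcal G$ and, using the hypothesis that the action is Hamiltonian, choose for each $i$ a smooth function $J_i:M\to\RR$ that is a Hamiltonian for $\psi(e_i)$ --- that is, $\mathi\bigl(\psi(e_i)\bigr)\omega=-\d J_i$ in the presymplectic or symplectic case, or $\psi(e_i)=\Lambda^\sharp(\d J_i)$ in the Poisson case. For an arbitrary $X=\sum_i X^i e_i$ I set $J_X=\sum_i X^i J_i$, and I define $J:M\to\mathcal G^*$ by $\bigl\langle J(x),e_i\bigr\rangle=J_i(x)$, equivalently $J=\sum_i J_i\,e^i$ in the dual basis $(e^1,\dots,e^n)$ of $\mathcal G^*$.

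The verification is then immediate from linearity. Since $\psi$ is in particular a linear map, $\psi(X)=\sum_i X^i\,\psi(e_i)$; and since $v\mapsto\mathi(v)\omega$, the exterior derivative $\d$, and the bundle morphism $\Lambda^\sharp$ are all linear, one obtains $\mathi\bigl(\psi(X)\bigr)\omega=-\d J_X$ (respectively $\psi(X)=\Lambda^\sharp(\d J_X)$). Thus $J_X$ is a Hamiltonian for $\psi(X)$, and $J_X(x)=\bigl\langle J(x),X\bigr\rangle$ holds by construction. Smoothness of $J$ follows at once from the smoothness of the finitely many functions $J_i$.

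I do not expect a genuine obstacle here: the construction uses only pointwise linearity in $X$, and the non-uniqueness of each $J_i$ (it is defined up to an additive locally constant function in the symplectic case, or up to a Casimir function in the Poisson case) causes no difficulty, since any admissible choice works. The point worth stressing is that this proposition claims \emph{only} the existence of Hamiltonians depending linearly on $X$; it does \emph{not} assert that $X\mapsto J_X$ is a morphism for the Lie and Poisson brackets, nor that $J$ is equivariant. Those finer properties may fail in general, and are precisely what the symplectic cocycle of Example~\ref{ExamplesPoissonManifolds}.3 is designed to measure, but they play no role in the present statement.
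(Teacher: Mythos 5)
Your proof is correct and is exactly the standard basis-and-linearity argument that the paper itself omits, deferring instead to \cite{LibermannMarle1987}, \cite{LaurentPichereauVanhaecke} and \cite{OrtegaRatiu2004}, where this is the proof given. Your closing remark correctly identifies that the proposition asserts only pointwise linear dependence on $X$ and not equivariance or compatibility with brackets, which is indeed treated separately later in the paper.
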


The proof of that result, which is easy, can be found for example in 
\cite{LibermannMarle1987}, \cite{LaurentPichereauVanhaecke} or \cite{OrtegaRatiu2004}.  

\begin{rmk}\label{NonUnicityMomentum}
The momentum map $J$ is not unique: 
\begin{itemize}

\item{}
when $(M,\omega)$ is a connected symplectic manifold, 
$J$ is determined up to addition of an arbitrary constant element in ${\mathcal G}^*$; 

\item{}
when $(M,\Lambda)$ is a connected Poisson manifold, the momentum map $J$ is determined up to addition of an arbitrary ${\mathcal G}^*$-valued smooth map which, coupled with any $X\in{\mathcal G}$, yields a Casimir of the 
Poisson algebra of $(M,\Lambda)$, \emph{i.e.} a smooth function on $M$ 
whose Poisson bracket with any other smooth function on that manifold
is the function identically equal to $0$.
\end{itemize}
\end{rmk}

\subsection{Noether's theorem in Hamiltonian formalism}

\begin{theo}[Noether's theorem in Hamiltonian formalism]\label{NoetherHamilton}
Let $X_f$ and $X_g$ be two Hamiltonian vector fields on a presymplectic or symplectic manifold
$(M,\omega)$, or on a Poisson manifold $(M,\Lambda)$, which admit as Hamiltonians, respectively, the 
smooth functions $f$ and $g$ on the manifold $M$. The function $f$ remains constant on 
each integral curve of $X_g$ if and only if $g$ remains constant on each integral curve of $X_f$.
\end{theo}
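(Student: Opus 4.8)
The plan is to translate both conditions in the statement into the vanishing of a single skew-symmetric quantity and then invoke its antisymmetry. First I would observe that, for any smooth vector field $Y$ and any smooth function $h$ on $M$, the function $h$ remains constant along every integral curve of $Y$ if and only if its derivative in the direction of $Y$ vanishes at every point, that is, if and only if $\mathi(Y)\d h = 0$ identically on $M$. Thus the whole assertion reduces to showing that $\mathi(X_g)\d f$ vanishes identically if and only if $\mathi(X_f)\d g$ does.

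Next I would compute both directional derivatives in terms of the common pairing of $X_f$ and $X_g$. In the symplectic and presymplectic cases, the defining relations $\mathi(X_f)\omega=-\d f$ and $\mathi(X_g)\omega=-\d g$, combined with the interior-product identity $\mathi(X)\mathi(Y)\omega=\omega(Y,X)$, yield
$$\mathi(X_g)\d f=-\mathi(X_g)\mathi(X_f)\omega=-\omega(X_f,X_g)\,,\qquad \mathi(X_f)\d g=-\mathi(X_f)\mathi(X_g)\omega=\omega(X_f,X_g)\,.$$
In the Poisson case, the equivalent characterization of a Hamiltonian vector field recalled above gives directly $\mathi(X_g)\d f=\{g,f\}$ and $\mathi(X_f)\d g=\{f,g\}$. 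In either setting the two directional derivatives are interchanged, up to sign, by the skew-symmetry of $\omega$ (respectively of the Poisson bracket), so that $\mathi(X_g)\d f=-\mathi(X_f)\d g$ at every point of $M$.

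The conclusion is then immediate: since these two functions are everywhere opposite, one vanishes identically precisely when the other does, which is exactly the claimed equivalence. I do not expect any genuine obstacle, as the result is essentially a restatement of the antisymmetry of the bracket; the only points requiring care are fixing the sign conventions in the interior-product identity consistently with the formula $\{f,g\}=\mathi(X_f)\d g=-\mathi(X_g)\d f$ used earlier, and, in the presymplectic case, noting that although the Hamiltonian vector fields $X_f$ and $X_g$ need not be unique, the identity $\mathi(X_g)\d f=-\mathi(X_f)\d g$ holds for every admissible choice, so that the statement is meaningful and the three cases are covered by one and the same argument.
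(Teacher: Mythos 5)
Your argument is correct and follows essentially the same route as the paper: both reduce the statement to the pointwise identity $\mathi(X_g)\d f=-\mathi(X_f)\d g$, obtained from the skew-symmetry of $\omega$ (equivalently, of the Poisson bracket of two Hamiltonians, which the paper notes makes sense even in the presymplectic case), with signs consistent with the convention $\{f,g\}=\mathi(X_f)\d g=\omega(X_f,X_g)$. Your added remark that the identity holds for every admissible choice of Hamiltonian vector field in the presymplectic case is a worthwhile precision that the paper leaves implicit.
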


\begin{proof}
The function $f$ is constant on each integral curve of $X_g$ if and only if $\mathi(X_g)\d f=0$, since each
integral curve of $X_g$ is connected. We can use the Poisson bracket, even when $M$ is a presymplectic manifold,
since the Poisson bracket of two Hamiltonians on a presymplectic manifold still can be defined. So we can write
 $$\mathi(X_g)\d f=\{g,f\}=-\{f,g\}=-\mathi(X_f)\d g\,.\eqno{\qedhere}
 $$
\end{proof}

\begin{coro}[of Noether's theorem in Hamiltonian formalism]\label{CoroNoetherHamilton}
Let $\psi:{\mathcal G}\to A^1(M)$ be a
Hamiltonian action of a finite-dimensional Lie algebra $\mathcal G$ on a presymplectic 
or symplectic manifold $(M,\omega)$, or on a Poisson manifold $(M,\Lambda)$, and 
let $J:M\to{\mathcal G}^*$ be a momentum map of this action. Let $X_H$ be a Hamiltonian vector field
on $M$ admitting as Hamiltonian a smooth function $H$. If for each $X\in{\mathcal G}$ we have
$\mathi\bigl(\psi(X)\bigr)(\d H)=0$, the momentum map $J$ remains constant on each integral curve of $X_H$. 
\end{coro}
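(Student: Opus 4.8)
The plan is to show that the hypothesis, which asserts that each component $J_X$ of the momentum map is constant along the integral curves of $X_H$, combined with Noether's theorem in Hamiltonian formalism (\ref{NoetherHamilton}), forces $H$ itself to be constant along the integral curves of each fundamental vector field $\psi(X)$, and then to turn this symmetric relationship around to conclude that the $J_X$ are constant along the integral curves of $X_H$. The key observation is that for each $X\in{\mathcal G}$, the vector field $\psi(X)$ is Hamiltonian with Hamiltonian $J_X$, by the very definition of the momentum map (\ref{ExistenceMomentum}). Thus we are in the setting of Theorem \ref{NoetherHamilton} with $f=J_X$ and $g=H$: the two vector fields $X_{J_X}=\psi(X)$ and $X_H$ are Hamiltonian vector fields admitting $J_X$ and $H$ as Hamiltonians respectively.

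First I would fix an arbitrary $X\in{\mathcal G}$ and record the hypothesis in the form $\mathi\bigl(\psi(X)\bigr)(\d H)=0$. The next step is to translate this condition, which says that $H$ remains constant along the integral curves of $\psi(X)$, using Noether's theorem in Hamiltonian formalism. Applying Theorem \ref{NoetherHamilton} with $f=H$ and $g=J_X$ (so that $X_f=X_H$ and $X_g=\psi(X)$), the statement that $H$ is constant on each integral curve of $\psi(X)=X_{J_X}$ is equivalent to the statement that $J_X$ is constant on each integral curve of $X_H$. This is precisely the conclusion we want, pointwise in $X$.

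The final step is to upgrade the per-$X$ statement to the statement that the ${\mathcal G}^*$-valued map $J$ itself is constant along each integral curve of $X_H$. Since for every $X\in{\mathcal G}$ the real-valued function $J_X=\langle J(\cdot),X\rangle$ is constant along a given integral curve of $X_H$, and since a point of ${\mathcal G}^*$ is determined by its pairings with all $X\in{\mathcal G}$, the map $J$ takes a constant value along that curve; this is the same reasoning already used in the proof of Corollary \ref{CoroNoetherLagrange}.

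I do not expect any genuine obstacle here: the result is essentially a direct application of the symmetry between $f$ and $g$ in Theorem \ref{NoetherHamilton}. The only point requiring a small care is the correct bookkeeping of which function plays the role of $f$ and which plays the role of $g$, together with the remark that $\psi(X)$ is precisely the Hamiltonian vector field of $J_X$; once these identifications are in place the proof is immediate, and the passage from the scalar conclusions to the vector-valued conclusion is the routine argument that a linear functional vanishing condition holding for all $X$ characterizes constancy in ${\mathcal G}^*$.
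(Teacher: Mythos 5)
Your proposal is correct and follows essentially the same route as the paper, which simply applies Theorem \ref{NoetherHamilton} to the pairs formed by $X_H$ and the fundamental vector fields $\psi(X)$ (for $X$ ranging over a basis of $\mathcal G$); your write-up merely makes explicit the identification $X_{J_X}=\psi(X)$ and the passage from the scalar statements to the ${\mathcal G}^*$-valued one, both of which the paper leaves implicit.
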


\begin{proof}
This result is obtained by applying \ref{NoetherHamilton} to the pairs of Hamiltonian vector fields made
by $X_H$ and each vector field associated to an element of a basis of $\mathcal G$. 
\end{proof}

\subsection{Symplectic cocycles}

\begin{theo}[J.M.~Souriau]\label{SouriauEquivarianceTheorem}
Let $\Phi$ be a Hamiltonian action (either on the left or on the right)  of a Lie group $G$ on 
a connected symplectic manifold $(M,\omega)$ and let $J:M\to{\mathcal G}^*$ be 
a momentum map of this action. 
There exists an affine action $A$ (either on the left or on the right) of the Lie group $G$ on the dual 
${\mathcal G}^*$ of its Lie algebra $\mathcal G$  such that the momentum map $J$ is equivariant 
with respect to the actions $\Phi$ of $G$ on $M$ and $A$ of $G$ on ${\mathcal G}^*$, \emph{i.e.} such that
 $$J\circ\Phi_g(x)=A_g\circ J(x)\quad\hbox{for all } g\in G\,,\ x\in M\,.
 $$
The action $A$ can be written, with $g\in G$ and $\xi\in{\mathcal G}^*$, 
 \begin{equation*}
 \begin{cases}
        A(g,\xi)=\Ad^*_{g^{-1}}(\xi)+\theta(g)& \text{if $\Phi$ is an action on the left,}\\
        A(\xi,g)=\Ad^*_{g}(\xi) -\theta(g^{-1})& \text{if $\Phi$ is an action on the right.}
 \end{cases}
 \end{equation*}
\end{theo}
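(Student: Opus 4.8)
The plan is to analyze how the momentum map $J$ transforms under the action $\Phi$, and to extract the affine action $A$ from this transformation law. I will treat the left-action case in detail; the right-action case follows by an analogous computation.

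First I would fix $g \in G$ and consider the map $J \circ \Phi_g : M \to \mathcal{G}^*$. The key observation is that for each $X \in \mathcal{G}$, the function $\langle J \circ \Phi_g, X\rangle = J_X \circ \Phi_g$ is a Hamiltonian for the pulled-back vector field $\Phi_g^*(\psi(X))$. Since $\Phi_g$ is a symplectomorphism (by \ref{HamiltonImpliesSymplectic}, as the action is Hamiltonian hence symplectic), pulling back the defining equation $\mathi(\psi(X))\omega = -\d J_X$ shows that $\Phi_g^*(\psi(X))$ is Hamiltonian with Hamiltonian $J_X \circ \Phi_g$. Now the fundamental-vector-field calculation identifies $\Phi_g^*(\psi(X))$ with $\psi(\Ad_{g^{-1}} X)$, so that $J_X \circ \Phi_g$ and $J_{\Ad_{g^{-1}} X} = \langle J, \Ad_{g^{-1}} X\rangle = \langle \Ad^*_{g^{-1}} J, X\rangle$ are both Hamiltonians for the same vector field. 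Because $M$ is connected and symplectic, two Hamiltonians for one vector field differ by a constant (\ref{NonUnicityMomentum}), so there exists an element $\theta(g) \in \mathcal{G}^*$ with
$$J \circ \Phi_g(x) = \Ad^*_{g^{-1}} J(x) + \theta(g)\,, \quad x \in M\,.$$
This is exactly the claimed formula, and it simultaneously defines the map $\theta : G \to \mathcal{G}^*$.

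Next I would verify that $A_g(\xi) = \Ad^*_{g^{-1}}\xi + \theta(g)$ genuinely defines an action of $G$ on $\mathcal{G}^*$. The homomorphism property $A_{g_1} \circ A_{g_2} = A_{g_1 g_2}$, imposed on the equivariance relation, forces the cocycle identity
$$\theta(g_1 g_2) = \theta(g_1) + \Ad^*_{g_1^{-1}} \theta(g_2)\,,$$
which I would derive by substituting the formula into $J \circ \Phi_{g_1 g_2} = J \circ \Phi_{g_1} \circ \Phi_{g_2}$ and comparing the constant terms (using that $\Ad^*$ is itself a representation). This identity is precisely what makes $A$ an affine action: the linear part $\xi \mapsto \Ad^*_{g^{-1}}\xi$ is the coadjoint action, and $\theta$ is the additive translation cocycle twisting it.

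I expect the main obstacle to be the bookkeeping in identifying $\Phi_g^*(\psi(X))$ with $\psi(\Ad_{g^{-1}} X)$ with the correct placement of the inverse and the correct convention for left versus right actions, since the statement uses $\Ad^*_{g^{-1}}$ for left actions but $\Ad^*_g$ for right actions, and the sign of $\theta$ flips accordingly. The cleanest route is to differentiate $\Phi_{g} \circ \Phi_{\exp(sX)} \circ \Phi_{g^{-1}} = \Phi_{g \exp(sX) g^{-1}} = \Phi_{\exp(s\, \Ad_g X)}$ at $s = 0$, which yields the transformation law for fundamental vector fields under $\Phi_g$; feeding this carefully through the symplectomorphism argument produces the exponent $g^{-1}$ in the left-action case. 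For the right-action case I would repeat the argument using the convention $\Psi_{g_1} \circ \Psi_{g_2} = \Psi_{g_2 g_1}$ from \ref{DefiActionLieGroupLieAlgebra}, which reverses the relevant inverse and replaces $\theta(g)$ by $-\theta(g^{-1})$, completing the proof.
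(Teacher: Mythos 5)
Your proposal is correct and follows essentially the same route as the paper: both arguments show that $J_X\circ\Phi_g$ and $\langle\Ad^*_{g^{-1}}J,X\rangle$ are Hamiltonians for the same fundamental vector field (your $\Phi_g^*\bigl(\psi(X)\bigr)$ being the paper's $(\Phi_{g^{-1}})_*(X_M)$), invoke connectedness of $M$ to conclude they differ by a constant $\langle\theta(g),X\rangle$, and then verify the cocycle identity to see that $A$ is an affine action. The only difference is cosmetic: you carry out explicitly the verification that $A$ is an action (which the paper defers to Proposition \ref{CocycleProperty}).
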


\begin{proof}
Let us assume that $\Phi$ is an action on the left. The fundamental vector field $X_M$ associated to
each $X\in{\mathcal G}$ is Hamiltonian, with the function
$J_X:M\to \RR$, given by
 $$J_X(x)=\bigl\langle J(x),X\bigr\rangle\,,\quad x\in M\,,
 $$
as Hamiltonian. For each $g\in G$ the direct image $(\Phi_{g^{-1}})_*(X_M)$ of $X_M$ by the symplectic diffeomerphism
$\Phi_{g^{-1}}$ is Hamiltonian, with $J_X\circ\Phi_g$ as Hamiltonian. An easy calculation
shows that this vector field is the fundamental vector field associated to $\Ad_{g^{-1}}(X)\in{\mathcal G}$. 
The function
 $$x\mapsto \bigl\langle J(x), \Ad_{g^{-1}}(X)\bigr\rangle 
           =\bigl\langle \Ad^*_{g^{-1}}\circ J(x),X\bigr\rangle
 $$
is therefore a Hamiltonian for that vector field. These two functions defined on the connected manifold $M$, which both are admissible Hamiltonians for the same Hamiltonian vector field, differ only by a constant (which may depend on $g\in G$).
We can set, for any $g\in G$,
 $$\theta(g)=J\circ\Phi_g(x)
                     -\Ad^*_{g^{-1}}\circ J(x)
 $$
and check that the map $A:G\times{\mathcal G}^*\to{\mathcal G}^*$ defined in the statement 
is indeed an action for which $J$ is equivariant.
\par\smallskip

A similar proof, with some changes of signs, holds when $\Phi$ is an action on the right.
\end{proof}

\begin{prop}\label{CocycleProperty}
Under the assumptions and with the notations of \ref{SouriauEquivarianceTheorem}, the map $\theta:G\to{\mathcal G}^*$
is a cocycle of the Lie group $G$ with values in ${\mathcal G}^*$, for the coadjoint representation.
It means that is satisfies, for all $g$ and $h\in G$,
 $$\theta(gh)=\theta(g)+\Ad^*_{g^{-1}}\bigl(\theta(h)\bigr)\,.
 $$
More precisely $\theta$ is a symplectic cocycle. It means that its differential 
$T_e\theta:T_eG\equiv{\mathcal G}\to{\mathcal G}^*$ at the neutral element $e\in G$ 
can be considered as a skew-symmetric bilinear form on $\mathcal G$:
 $$\Theta(X,Y)=\bigl\langle T_e\theta(X),Y\bigr\rangle=-\bigl\langle T_e\theta(Y),X\bigr\rangle\,.
 $$
The skew-symmetric bilinear form $\Theta$ is a symplectic cocycle of the Lie algebra $\mathcal G$. It means that it is skew-symmetric and satisfies, for all $X$, $Y$ and $Z\in{\mathcal G}$,
 $$\Theta\bigl([X,Y],Z\bigr) + \Theta\bigl([Y,Z],X\bigr) + \Theta\bigl([Z,X],Y\bigr)=0\,.
 $$
\end{prop}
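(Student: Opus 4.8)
The plan is to work entirely from the defining relation of $\theta$ obtained in the proof of \ref{SouriauEquivarianceTheorem} (treating, as there, the case where $\Phi$ is a left action; the right case is symmetric): for every $x\in M$,
$$\theta(g)=J\circ\Phi_g(x)-\Ad^*_{g^{-1}}\circ J(x)\,,$$
the right-hand side being independent of $x$ because $M$ is connected. First I would establish the group $1$-cocycle identity. Applying this relation at the point $\Phi_h(x)$ for the element $g$, and then inserting the relation for $\theta(h)$, one obtains, using that $\Phi$ is a left action ($\Phi_{gh}=\Phi_g\circ\Phi_h$),
$$J\circ\Phi_{gh}(x)=\theta(g)+\Ad^*_{g^{-1}}\bigl(\theta(h)\bigr)+\Ad^*_{g^{-1}}\Ad^*_{h^{-1}}\circ J(x)\,.$$
Comparing with $\theta(gh)=J\circ\Phi_{gh}(x)-\Ad^*_{(gh)^{-1}}\circ J(x)$ and using $\Ad^*_{g^{-1}}\Ad^*_{h^{-1}}=\Ad^*_{(gh)^{-1}}$ (the very homomorphism property that made $A$ an action in \ref{SouriauEquivarianceTheorem}) yields $\theta(gh)=\theta(g)+\Ad^*_{g^{-1}}\bigl(\theta(h)\bigr)$, which is the asserted cocycle property for the coadjoint representation.

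The key step for the remaining assertions is to compute $T_e\theta$ explicitly. Pairing $\theta(\exp sX)$ with $Y\in{\mathcal G}$ and differentiating at $s=0$, the term $\langle J(\Phi_{\exp sX}(x)),Y\rangle=J_Y\circ\Phi_{\exp sX}(x)$ differentiates to $\psi(X)[J_Y](x)=\{J_X,J_Y\}(x)$, since $\psi(X)$ is the Hamiltonian vector field of $J_X$ (\ref{ExistenceMomentum}) and $\{f,g\}=\mathi(X_f)\d g$; the term $\langle \Ad^*_{\exp(-sX)}J(x),Y\rangle=\langle J(x),\Ad_{\exp(-sX)}Y\rangle$ differentiates to $-\langle J(x),[X,Y]\rangle=-J_{[X,Y]}(x)$. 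Hence
$$\Theta(X,Y)=\bigl\langle T_e\theta(X),Y\bigr\rangle=\{J_X,J_Y\}(x)+J_{[X,Y]}(x)\,,$$
a quantity which, consistently with the left-hand side, does not depend on $x$. I expect the main obstacle to be the bookkeeping of signs and normalizations here: the direction of $\Ad^*_{g^{-1}}$, the bracket on ${\mathcal G}$ induced by a left action through \ref{LieAlgebraActionDeterminedByLieGroupAction}, and the sign in $\mathi(X_f)\omega=-\d f$ all enter, and they fix the precise signs above without affecting the structural conclusions drawn below.

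Skew-symmetry of $\Theta$ is then immediate from this formula: exchanging $X$ and $Y$ turns $\{J_X,J_Y\}$ into its opposite (skew-symmetry of the Poisson bracket) and $J_{[X,Y]}$ into $J_{[Y,X]}=-J_{[X,Y]}$ (skew-symmetry of the Lie bracket together with the linearity of $X\mapsto J_X$), so that $\Theta(Y,X)=-\Theta(X,Y)$. This is exactly what allows $T_e\theta$ to be read as a skew-symmetric bilinear form $\Theta$ on ${\mathcal G}$.

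Finally, for the Lie-algebra cocycle identity I would substitute $J_{[X,Y]}=\Theta(X,Y)-\{J_X,J_Y\}$ into $\Theta([X,Y],Z)=\{J_{[X,Y]},J_Z\}+J_{[[X,Y],Z]}$. Because $\Theta(X,Y)$ is a constant, its Poisson bracket with $J_Z$ vanishes, whence $\{J_{[X,Y]},J_Z\}=-\{\{J_X,J_Y\},J_Z\}$. Summing cyclically over $(X,Y,Z)$, the Poisson terms cancel by the Jacobi identity for the Poisson bracket, while the terms $J_{[[X,Y],Z]}$ cancel by the Jacobi identity for the Lie bracket together with the linearity of $X\mapsto J_X$. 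This gives
$$\Theta\bigl([X,Y],Z\bigr)+\Theta\bigl([Y,Z],X\bigr)+\Theta\bigl([Z,X],Y\bigr)=0\,,$$
which completes the proof.
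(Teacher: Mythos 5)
Your proposal is correct and follows exactly the route the paper indicates: the group cocycle identity is extracted from $\Phi_{gh}=\Phi_g\circ\Phi_h$ together with the defining relation $\theta(g)=J\circ\Phi_g(x)-\Ad^*_{g^{-1}}\circ J(x)$, and the Lie-algebra statements follow from the resulting formula $\Theta(X,Y)=\{J_X,J_Y\}\pm J_{[X,Y]}$ via the Jacobi identities for the Poisson and Lie brackets — which is precisely the computation the paper delegates to \cite{LibermannMarle1987}, \cite{Souriau1969} and \cite{Marle2014}. Your remark that the sign ambiguity in front of $J_{[X,Y]}$ (coming from the choice of bracket on $\mathcal G$ for a left action, cf.\ \ref{LieAlgebraActionDeterminedByLieGroupAction}) does not affect skew-symmetry or the cocycle identity is accurate, so there is no gap.
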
 

\begin{proof}
These properties easily follow from the fact that when $\Phi$ is an action on the left, for $g$ and $h\in G$,
$\Phi_g\circ\Phi_h=\Phi_{gh}$ (and a similar equality when $\Phi$ is an action on the right).
The interested reader will find more details in \cite{LibermannMarle1987}, \cite{Souriau1969} or \cite{Marle2014}.
\end{proof} 

\begin{prop}\label{MomentumPoisson}
Still under the assumptions and with the notations of \ref{SouriauEquivarianceTheorem},
the composition law which associates to each pair
$(f,g)$ of smooth real-valued functions on ${\mathcal G}^*$ the function $\{f,g\}_\Theta$ given by
 $$\{f,g\}_{\Theta}(x)=\bigl\langle x,[\d f(x),\d g(x)]\bigr\rangle
  -{\Theta}\bigl(\d f(x),\d g(x)\bigr)\,,\quad x\in{\mathcal G}^*\,,
 $$
($\mathcal G$ being identified with its bidual ${\mathcal G}^{**}$),
determines a Poisson structure on ${\mathcal G}^*$, and the momentum map
$J:M\to{\mathcal G}^*$ is a Poisson map, $M$ being endowed with the Poisson structure associated to its
symplectic structure.
\end{prop}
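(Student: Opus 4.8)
The plan is to reduce the verification to the \emph{linear} functions on $\mathcal{G}^*$ and then to exploit the infinitesimal form of the equivariance already established in \ref{SouriauEquivarianceTheorem}. For $X\in\mathcal{G}$ write $\hat X\in C^\infty(\mathcal{G}^*,\RR)$ for the linear function $\hat X(\xi)=\langle\xi,X\rangle$; under the identification $\mathcal{G}^{**}\cong\mathcal{G}$ one has $\d\hat X(\xi)=X$ at every $\xi$, and $\hat X\circ J=J_X$. The point of the reduction is that, for a fixed $x\in M$ with $\xi=J(x)$, both sides of the Poisson-morphism identity $\{f\circ J,g\circ J\}=\{f,g\}_\Theta\circ J$ depend on $f,g$ only through the covectors $\d f(\xi),\d g(\xi)\in\mathcal{G}$: the right-hand side by the very shape of $\{\,,\}_\Theta$, and the left-hand side because $\d(f\circ J)(x)=(T_xJ)^{T}\bigl(\d f(\xi)\bigr)$, so the Hamiltonian vector field $X_{f\circ J}(x)$ defined by $\mathi(X_{f\circ J})\omega=-\d(f\circ J)$ depends only on $\d f(\xi)$. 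Hence it suffices to prove the identity for $f=\hat X$, $g=\hat Y$, that is, to compute $\{J_X,J_Y\}$ on $M$.

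For that computation I use that the action is Hamiltonian with momentum map $J$, so $\psi(X)$ is the Hamiltonian vector field of $J_X$ (\ref{ExistenceMomentum}): $\mathi\bigl(\psi(X)\bigr)\omega=-\d J_X$. Therefore $\{J_X,J_Y\}=\mathi\bigl(\psi(X)\bigr)\d J_Y=\langle TJ\circ\psi(X),Y\rangle$. I then differentiate the equivariance relation $J\circ\Phi_g=A_g\circ J$ of \ref{SouriauEquivarianceTheorem} along $g=\exp(sX)$ at $s=0$: the left-hand side produces $TJ\circ\psi(X)$, the linear part $\Ad^*_{g^{-1}}$ of $A_g$ produces $-\ad^*_X\circ J$, and the translation part $\theta(g)$ produces the constant $T_e\theta(X)$. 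This gives the infinitesimal equivariance $TJ\circ\psi(X)=-\ad^*_X\circ J+T_e\theta(X)$, and pairing with $Y$, together with $\Theta(X,Y)=\langle T_e\theta(X),Y\rangle$ from \ref{CocycleProperty}, yields the fundamental bracket relation $\{J_X,J_Y\}=J_{[X,Y]}-\Theta(X,Y)=\{\hat X,\hat Y\}_\Theta\circ J$, where $J_{[X,Y]}(x)=\langle J(x),[X,Y]\rangle$ and $\Theta(X,Y)$ is a constant. With the reduction above this shows that $J$ is a Poisson map.

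It remains to check that $\{\,,\}_\Theta$ is genuinely a Poisson bracket, which is the content of \ref{ExamplesPoissonManifolds}.3. Skew-symmetry is immediate, and the Leibniz rule holds because $\{\,,\}_\Theta$ is built from the first differentials of its arguments, hence is a derivation in each slot; the only substantial axiom is the Jacobi identity. Since $\{\,,\}_\Theta$ is affine in $\xi$, its Jacobiator is determined by its values on the generating linear functions $\hat X,\hat Y,\hat Z$, where it reduces to the sum of the Jacobi identity for $[\,,]$ in $\mathcal{G}$ and the cocycle identity $\Theta([X,Y],Z)+\Theta([Y,Z],X)+\Theta([Z,X],Y)=0$; the latter is exactly the symplectic-cocycle property of $\Theta$ proved in \ref{CocycleProperty}. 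Alternatively, once $J$ is known to be a Poisson morphism and the fundamental relation $\{J_X,J_Y\}=J_{[X,Y]}-\Theta(X,Y)$ holds, the Jacobi identity on $M$ forces the same cocycle identity, giving a second route to the conclusion.

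The case of a right action is handled by the same argument with the sign changes already recorded in \ref{SouriauEquivarianceTheorem}. The one genuine obstacle is the \emph{bookkeeping of signs}: the outcome of the differentiation above depends on the convention $\mathi(X)\omega=-\d H$, on the choice (fixed in \ref{LieAlgebraActionDeterminedByLieGroupAction}) of right- versus left-invariant vector fields defining the bracket of $\mathcal{G}$, and on the coadjoint and $\ad^*$ conventions forced by $A$ being a genuine action. These must be tracked consistently so that the fundamental relation emerges with precisely the signs appearing in $\{\,,\}_\Theta$.
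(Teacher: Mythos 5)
Your proposal is correct and follows exactly the route the paper indicates: the paper's own proof is only a sketch (it refers the Poisson-bracket axioms back to \ref{ExamplesPoissonManifolds}, says that the Poisson-morphism property of $J$ "can be proven by first looking at linear functions on ${\mathcal G}^*$", and defers the details to \cite{Marle2014}), and you supply precisely those details — the reduction to linear functions, the infinitesimal equivariance giving $\{J_X,J_Y\}=J_{[X,Y]}-\Theta(X,Y)$, and the Jacobi identity via the cocycle identity of \ref{CocycleProperty}. Your explicit caveat about tracking the sign conventions ($\mathi(X_f)\omega=-\d f$, right- versus left-invariant fields, and the $\ad^*$ convention) is well placed, since those are the only points where the computation could silently go wrong.
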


\begin{proof}
The fact that the bracket $(f,g)\mapsto\{f,g\}_\Theta$ on $C^\infty({\mathcal G}^*,\RR)$ is a
Poisson bracket was already indicated in \ref{ExamplesPoissonManifolds}. It can be verified by easy
calculations. The fact that $J$ is a Poisson map can be proven by first looking at linear functions on 
${\mathcal G}^*$, \emph{i.e.} elements in $\mathcal G$. The reader will find a detailed proof in 
\cite{Marle2014}.  
\end{proof}

\begin{rmk}
When the momentum map $J$ is replaced by another momentum map $J_1=J+\mu$, where $\mu\in{\mathcal G}^*$ is a constant, the symplectic Lie group cocycle $\theta$ and the symplectic Lie algebra cocycle $\Theta$ are 
replaced by $\theta_1$ and $\Theta_1$, respectively, given by
 \begin{align*}
  \theta_1(g)&=\theta(g)+\mu -\Ad^*_{g^{-1}}(\mu)\,,\quad g\in G\,,\\ 
   \Theta_1(X,Y)&=\Theta(X,Y)+\bigl\langle\mu,[X,Y]\bigr\rangle\,,\quad X\ \hbox{and}\ 
   Y\in{\mathcal G}\,.
 \end{align*}
These formulae show that $\theta_1-\theta$ and $\Theta_1-\Theta$ are \emph{symplectic coboudaries} of the Lie
group $G$ and the Lie algebra $\mathcal G$. In other words, the \emph{cohomology classes} of the cocycles
$\theta$ and $\Theta$ only depend on the Hamiltonian action $\Phi$ of $G$ on the symplectic manifold
$(M,\omega)$. 
\end{rmk}

\subsection{The use of symmetries in Hamiltonian Mechanics}\label{SymmetriesInHamiltonianMechanics}

\subsubsection{Symmetries of the phase space}\label{SymmetriesPhaseSpace}
Hamiltonian Symmetries are often used for the search of solutions of the equations of motion
of mechanical systems. The symmetries considered are those of the \emph{phase space} of the mechanical
system. This space is very often a \emph{symplectic manifold}, 
either the cotangent bundle to the configuration 
space with its canonical symplectic structure, or a more general symplectic manifold.
Sometimes, after some simplifications, the phase space is a \emph{Poisson manifold}. 
\par\smallskip

The \emph{Marsden-Weinstein reduction procedure} \cite{MarsdenWeinstein74, Meyer73}
or one of its generalizations \cite{OrtegaRatiu2004} is the method most often used to
facilitate the determination of solutions of the equations of motion. 
In a first step, a possible value of the momentum map is chosen and the subset 
of the phase space on which the momentum map takes this value is determined.
In a second step, that subset (when it is a smooth manifold) is quotiented 
by its isotropic foliation. The quotient manifold is a symplectic manifold of 
a dimension smaller than that of the original phase space, and one has an easier 
to  solve Hamiltonian system on that reduced phase space. 
\par\smallskip

When Hamiltonian symmetries are used  for the reduction of the dimension of the phase space of a mechanical system, the symplectic cocycle of the Lie group of symmetries action, or of the Lie algebra 
of symmetries action, is almost always the \emph{zero cocycle}.
\par\smallskip

For example, if the goup of symmetries is the canonical lift to the cotangent bundle of a group of symmetries of the configuration space, not only the 
\emph{canonical symplectic form}, but the \emph{Liouville $1$-form} 
of the cotangent bundle itself
remains invariant under the action of the symmetry group, and this fact implies that the symplectic
cohomology class of the action is zero.

\subsubsection{Symmetries of the space of motions}\label{SymmetriesSpaceOfMotions}
A completely different way of using symmetries was initiated by Jean-Marie Souriau, who proposed
to consider the symmetries of the \emph{manifold of motions} of the mechanical system.
He observed that the Lagrangian and Hamiltonian formalisms, 
in their usual formulations, \emph{involve the choice of a particular reference  frame}, in which the motion is described. This choice
\emph{destroys a part of the natural symmetries of the system}.
\par\smallskip

For example, in classical (non-relativistic) Mechanics, the natural symmetry group of an isolated
mechanical system must contain the symmetry group of the \emph{Galilean space-time}, called the
\emph{Galilean group}. This group is of \emph{dimension 10}. It contains not only the 
\emph{group of Euclidean displacements of space} which is of \emph{dimension 6} and the 
\emph{group of time translations} which is of \emph{dimension 1}, but the 
\emph{group of linear changes of Galilean reference frames} which is of \emph{dimension 3}.
\par\smallskip

If we use the \emph{Lagrangian  formalism} or the \emph{Hamiltonian formalism}, 
the Lagrangian or the Hamiltonian of the system 
\emph{depends on the reference frame}: \emph{it is not invariant 
with respect to linear changes of Galilean reference frames}.
\par\smallskip

It may seem strange to consider the set of all possible motions of
a system, {which is unknown} as long as we have not determined all these possible 
motions. One may ask if it is really useful when we want to determine not all 
possible motions, but only one motion with prescribed initial data, since that 
motion is just one point of the (unknown) manifold of motion!
\par\smallskip

Souriau's answers to this objection are the following.

\par\smallskip\noindent
{\rm 1.\quad} We know that the manifold of motions has a 
\emph{symplectic structure}, and  very often many things are 
known about its \emph{symmetry properties}.

\par\smallskip\noindent
{\rm 2.\quad} In classical (non-relativistic) mechanics, 
there exists a natural mathematical object which \emph{does not depend 
on the choice of a particular reference frame} 
(even if the decriptions given to that object by different 
observers depend on the reference frame used by these observers):
it is the \emph{evolution space} of the
system. 
\par\smallskip

The knowledge of the equations which govern the system's 
evolution allows the full mathematical description of the 
\emph{evolution space}, even when these equations are not yet solved.
\par\smallskip

Moreover, the symmetry properties of the \emph{evolution space} 
are the same as those of the manifold of motions.
\par\smallskip

For example, the \emph{evolution space} of a classical mechanical system 
with configuration manifold $N$ is

\begin{enumerate}

\item{} in the Lagrangian formalism, the space $\RR\times TN$ endowed with the presymplectic
form $\d\widehat\varpi_L$, whose kernel is of dimension $1$ when the Lagrangian $L$ is hyper-regular,

\item{} in the Hamiltonian formalism, the space $\RR\times T^*N$ with the presymplectic form
$\d\widehat\varpi_H$, whose kernel too is of dimension $1$.
\end{enumerate}

The Poincaré-Cartan $1$-form $\widehat\varpi_L$ in the Lagrangian formalism, or $\widehat\varpi_H$ in the 
Hamiltonian formalism, depends on the choice of a particular reference frame, made for using the Lagrangian or the Hamiltonian formalism. 
But their exterior differentials, the presymplectic forms
$\d\widehat\varpi_L$ or $\d\widehat\varpi_H$, \emph{do not depend on that choice}, modulo a simple change of variables in the evolution space. 
\par\smallskip

Souriau defined this presymplectic form in a framework 
more general than those of Lagrangian or Hamiltonian formalisms, and called it the
\emph{Lagrange form}. In this more general setting, it may not be an exact $2$-form. Souriau proposed as a new 
\emph{Principle}, the assumption that it always projects on the space of motions
of the systems as a 
\emph{symplectic form}, even in Relativistic Mechanics 
in which the definition of an evolution space is not clear. He called
this new principle the \emph{Maxwell Principle}.
\par\smallskip

\emph{V.~Bargmann} proved that the symplectic cohomology of the Galilean group is of dimension $1$, and 
Souriau proved that the cohomology class of its action on the manifold of motions of
an isolated classical (non-relativistic) mechanical system can be identified with the
\emph{total mass} of the system \cite{Souriau1969}, chapter III, page 153.
\par\smallskip

Readers interested in the Galilean group and momentum maps of its actions are referred to the recent book by G.~de~Saxcé and C.~Vallée \cite{deSaxceVallee2016}.

\section{Statistical Mechanics and Thermodynamics}\label{StatisticalMechanicsThermodynamics} 

\subsection{Basic concepts in Statistical Mechanics}\label{BasicConceptsInStatisticalMechanics}

During the XVIII--th and XIX--th centuries, the idea that material bodies (fluids as well as solids)
are assemblies of a very large number of small, moving particles, began to be considered by some scientists,
notably Daniel Bernoulli (1700--1782), Rudolf Clausius (1822--1888), James Clerk Maxwell 
(1831--1879) and Ludwig Eduardo Boltzmann (1844--1906), 
as a reasonable possibility. Attemps were made to explain the nature of some measurable macroscopic quantities (for example the temperature of a material body, the pressure exerted by a gas on the walls of the vessel in which it is contained), and the laws which govern the variations of these macroscopic quantities, by application of the laws of Classical Mechanics to the motions of these very small particles. Described in
the framework of the Hamiltonian formalism, the material body is considered as a Hamiltonian system whose phase space is a very high dimensional symplectic manifold $(M,\omega)$, since an element of that space gives a perfect information about the positions and the velocities of all the particles of the system. The experimental determination of the exact state of the system being impossible, one only can use the 
probability of presence, at each instant, of the state of the system in various parts of the phase space.
Scientists introduced the concept of a \emph{statistical state}, defined below.

\begin{defi}\label{DefiStatisticalState} 
Let $(M,\omega)$ be a symplectic manifold. A \emph{statistical state} is a probability measure
$\mu$ on the manifold $M$.
\end{defi}   

\subsubsection{The Liouville measure on a symplectic manifold}\label{LiouvilleMeasure}
On each symplectic manifold $(M,\omega)$, with $\dim M=2n$, there exists a positive measure 
$\lambda_\omega$, called the \emph{Liouville measure}. Let us briefly recall its definition.
Let $(U,\varphi)$ be a Darboux chart of $(M,\omega)$
(\ref{HamiltonianFormalismOnSymplecticManifolds}). 
The open subset $U$ of $M$ is, by means of the diffeomorphism $\varphi$, 
identified with an open subset $\varphi(U)$ of $\RR^{2n}$ on which the coordinates 
(Darboux coordinates) will be denoted by $(p_1,\ldots,p_n,x^1,\ldots,x^n)$. With 
this identification, the Liouville measure (restricted to $U$) is simply the 
Lebesgue measure on the open subset $\varphi(U)$ of $\RR^{2n}$. In other words, 
for each Borel subset $A$ of $M$ contained in $U$, we have
 $$\lambda_\omega(A)=\int_{\varphi(A)}\d p_1\ldots\,\d p_n\,\d x^1\ldots\,\d x^n\,.
 $$   
One can easily check that this definition does not depend on the choice of the Darboux coordinates
$(p_1,\ldots,p_n,x^1,\ldots, x^n)$ on $\varphi(A)$. By using an atlas of Darboux charts on $(M,\omega)$,
one can easily define $\lambda_\omega(A)$ for any Borel subset $A$ of $M$.

\begin{defi}\label{DefiContinuousOrSmoothStatisticalState} 
A statistical state $\mu$ on the syplectic manifold $(M,\omega)$ is said to be \emph{continuous}
(respectively, is said to be \emph{smooth}) if it has a continuous (respectively, a smooth) density with respect to the Liouville measure $\lambda_\omega$, \emph{i.e.} if there exists a continuous function 
(respectively, a smooth function) $\rho:M\to \RR$ such that, for each Borel subset $A$ of $M$
 $$\mu(A)=\int_A\rho\d \lambda_\omega\,.
 $$  
\end{defi}

\begin{rmk}\label{RemarkTotalProbaIs1}
The density $\rho$ of a continuous statistical state on $(M,\omega)$ takes its values in
$\RR^+$ and of course satisfies
 $$\int_M\rho\d\lambda_\omega=1\,.
 $$
\end{rmk}

For simplicity we only consider in what follows continuous, very often even smooth statistical states.

\subsubsection{Variation in time of a statistical state}\label{TimeVariationOfStatisticalState}
Let $H$ be a smooth time independent Hamiltonian on a symplectic manifold $(M,\omega)$, $X_H$ 
the associated Hamiltonian vector field and $\Phi^{X_H}$ its reduced flow. We consider the mechanical system whose time evolution is described by the flow of $X_H$. 
\par\smallskip

If the state of the system at time
$t_0$, assumed to be perfectly known, is a point $z_0\in M$, 
its state at time $t_1$ is the point $z_1=\Phi^{X_H}_{t_1-t_0}(z_0)$.
\par\smallskip

Let us now assume that the state of the system at time $t_0$ is not perfectly known, but that a
continuous probability measure on the phase space $M$, whose density with respect to the Liouville measure 
$\lambda_\omega$ is $\rho_0$, describes the probability distribution of presence of the state of the system
at time $t_0$. In other words, $\rho_0$ is the density of the statistical state of the system at time
$t_0$. For any other time $t_1$, the map $\Phi^{X_H}_{t_1-t_0}$ is a symplectomorphism, therefore leaves
invariant the Liouville measure $\lambda_\omega$. The probability density $\rho_1$ of the statistical state 
of the system at time $t_1$ therefore satisfies, for any $z_0\in M$ for which
$x_1=\Phi^{X_H}_{t_1-t_0}(x_0)$ is defined,
 $$\rho_1(x_1)=\rho_1\bigl(\Phi^{X_H}_{t_1-t_0}(x_0)\bigr)=\rho_0(x_0)\,.
 $$ 
Since $\bigl(\Phi^{X_H}_{t_1-t_0}\bigr)^{-1}=\Phi^{X_H}_{t_0-t_1}$, we can write
 $$\rho_1=\rho_0\circ\Phi^{X_H}_{t_0-t_1}\,.
 $$

\begin{defi}\label{DefiEntropyOfStatisticalState}
Let $\rho$ be the density of a continuous statistical state $\mu$ on the symplectic manifold 
$(M,\omega)$. The number
 $$s(\rho)=\int_M\rho\log\left(\frac{1}{\rho}\right)\d\lambda_\omega
 $$
is called the \emph{entropy} of the statistical state $\mu$ or, with a slight abuse of language, 
the \emph{entropy} of the density $\rho$.
\end{defi}

\begin{rmks}\hfill
\label{ConventionsForEntropyDefinition}

\par\smallskip\noindent 
{\rm 1.\quad}By convention we state that $0\log 0=0$. With that convention 
the function $x\mapsto x\log x$ is continuous on $\RR^+$.
If the integral on the right hand side of the equality which defines
$s(\rho)$ does not converge, we state that $s(\rho)=-\infty$. With these conventions, 
$s(\rho)$ exists for any continuous probability density $\rho$.

\par\smallskip\noindent
{\rm 2.\quad} The above definition (\ref{DefiEntropyOfStatisticalState}) of the entropy 
of a statistical state, founded on ideas developed by Boltzmann in his Kinetic Theory
of Gases \cite{Boltzmann}, specially in the derivation of his famous (and controversed)
\emph{Theorem \^Eta}, is too related with the ideas of Claude Shannon \cite{Shannon1948}
on Information theorey. The use of Information theory in Thermodynamics was more recently 
proposed by Jaynes \cite{Jaynes1957a, Jaynes1957b} and Mackey
\cite{Mackey1963}. For a very nice discussion of the use of probability concepts in Physics and application of Information theory in Quantum Mechanics, the reader is referred to
the paper by R. Balian \cite{Balian2005}.
\end{rmks}

The entropy $s(\rho)$ of a probability density $\rho$ has very remarkable variational properties discussed in the following definitions and proposition.

\begin{defis}\label{DefisMeanValueVariationsStationarity} 
Let $\rho$ be the density of a smooth statistical state on a symplectic manifold
$(M,\omega)$.
\par\smallskip

{\rm 1.\quad} For each function $f$ defined on $M$, taking its values in $\RR$ or in some finite-dimensional vector space, such that the integral on the right hand side of the
equality
 $${\mathcal E}_\rho(f)= \int_Mf\rho\d\lambda_\omega
 $$
converges, the value ${\mathcal E}_\rho(f)$ of that integral is called the \emph{mean value
of $f$ with respect to $\rho$}.
\par\smallskip

{\rm 2.\quad} Let $f$ be a smooth function on $M$, taking its values in $\RR$ or in some finite-dimensional vector space, satisfying the properties stated above. A 
\emph{ smooth infinitesimal variation of $\rho$ with fixed mean value of $f$}
is a smooth map, defined on the product $]-\varepsilon,\varepsilon[\,\times M$, with values in $\RR^+$, 
where $\varepsilon>0$,
 $$(\tau,z)\mapsto\rho(\tau,z)\,,\quad \tau\in]-\varepsilon,\varepsilon[,\ z\in M\,,
 $$ 
such that 
\begin{itemize}

\item{} for $\tau=0$ and any $z\in M$, $\rho(0,z)=\rho(z)$,

\item{} for each $\tau\in]-\varepsilon,\varepsilon[$\,, $z\mapsto\rho_\tau(z)=\rho(\tau,z)$ is a smooth probability density on $M$ such that
 $${\mathcal E}_{\rho_\tau}(f)= \int_M\rho_\tau f\d\lambda_\omega={\mathcal E}_{\rho}(f)\,.
 $$
\end{itemize}

{\rm 3.\quad} The entropy function $s$ is said to be \emph{stationary} at the probability density $\rho$
with respect to smooth infinitesimal variations of $\rho$ with fixed mean value of $f$, if
for any smooth infinitesimal variation $(\tau, z)\mapsto\rho(\tau,z)$ of $\rho$ with fixed mean value of $f$
 $$\frac{\d s(\rho_\tau)}{\d\tau}\Bigm|_{\tau=0}=0\,.
 $$
\end{defis}

\begin{prop}\label{StationarityOfEntropy}
Let $H:M\to\RR$ be a  smooth Hamiltonian on a symplectic manifold $(M,\omega)$ and $\rho$
be the density of a smooth statistical state on $M$ such that the integral defining the mean value
${\mathcal E}_\rho(H)$ of $H$ with respect to $\rho$ converges. 
The entropy function $s$ is stationary at $\rho$
with respect to smooth infinitesimal variations of $\rho$ with fixed mean value of $H$, if and only if there exists a real $b\in\RR$ such that, for all $z\in M$,
 $$\rho(z)=\frac{1}{P(b)}\exp\bigl(-bH(z)\bigr)\,,\quad\hbox{with}\quad
      P(b)=\int_M\exp(-bH)\d\lambda_\omega\,.
 $$
\end{prop}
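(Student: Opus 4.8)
The plan is to treat this as a constrained variational problem and solve it by the method of Lagrange multipliers, made concrete through the first variation of the entropy functional. First I would fix a smooth infinitesimal variation $(\tau,z)\mapsto\rho(\tau,z)$ of $\rho$ with fixed mean value of $H$, in the sense of Definition~\ref{DefisMeanValueVariationsStationarity}, and set $\delta\rho(z)=\dfrac{\partial\rho(\tau,z)}{\partial\tau}\Bigm|_{\tau=0}$. Differentiating at $\tau=0$ the two constraints carried by such a family, namely $\int_M\rho_\tau\,\d\lambda_\omega=1$ (each $\rho_\tau$ is a probability density) and $\int_M\rho_\tau H\,\d\lambda_\omega=\mathcal E_\rho(H)$, yields the two linear conditions
 $$\int_M\delta\rho\,\d\lambda_\omega=0\,,\qquad\int_M\delta\rho\,H\,\d\lambda_\omega=0\,.$$

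Next I would compute the first variation of the entropy. Writing $s(\rho)=\int_M\rho\log(1/\rho)\,\d\lambda_\omega=-\int_M\rho\log\rho\,\d\lambda_\omega$ (Definition~\ref{DefiEntropyOfStatisticalState}) and using $\dfrac{\d}{\d u}(u\log u)=\log u+1$, differentiation under the integral sign gives
 $$\frac{\d s(\rho_\tau)}{\d\tau}\Bigm|_{\tau=0}=-\int_M\delta\rho\,(\log\rho+1)\,\d\lambda_\omega=-\int_M\delta\rho\,\log\rho\,\d\lambda_\omega\,,$$
where the term coming from the constant $1$ drops out by the first condition above. Hence stationarity of $s$ at $\rho$ is equivalent to the requirement that $\int_M\delta\rho\,\log\rho\,\d\lambda_\omega=0$ for every admissible $\delta\rho$, i.e. for every function $\delta\rho$ which, for the pairing given by integration against $\lambda_\omega$, is orthogonal to both the constant function $1$ and the function $H$.

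The crux is then to deduce from this that $\log\rho$ lies in the two-dimensional space spanned by $1$ and $H$. This is the Lagrange-multiplier step: the linear functional $\delta\rho\mapsto\int_M\delta\rho\,\log\rho\,\d\lambda_\omega$ vanishes on the common kernel of the two functionals $\delta\rho\mapsto\int_M\delta\rho\,\d\lambda_\omega$ and $\delta\rho\mapsto\int_M\delta\rho\,H\,\d\lambda_\omega$, so by elementary linear algebra it is a linear combination of the latter two; thus there exist constants $a,b\in\RR$ with $\int_M\delta\rho\,(\log\rho+a+bH)\,\d\lambda_\omega=0$ for every admissible $\delta\rho$. Letting $\delta\rho$ range over a sufficiently rich class of test functions and invoking the fundamental lemma of the calculus of variations gives $\log\rho=-a-bH$, hence $\rho=e^{-a}\exp(-bH)$; the normalization $\int_M\rho\,\d\lambda_\omega=1$ then forces $e^{-a}=1/P(b)$ with $P(b)=\int_M\exp(-bH)\,\d\lambda_\omega$. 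Conversely, if $\rho=\frac{1}{P(b)}\exp(-bH)$ then $\log\rho=-\log P(b)-bH$ is a combination of $1$ and $H$, so $\int_M\delta\rho\,\log\rho\,\d\lambda_\omega=0$ for every admissible $\delta\rho$ and $s$ is stationary at $\rho$; this direction is immediate.

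The main obstacle I anticipate is the analytic content hidden in the variational step rather than the formal algebra. One must exhibit enough genuine admissible variations $\rho(\tau,z)$ — smooth families of \emph{positive} probability densities with $\mathcal E_{\rho_\tau}(H)$ held constant — to legitimately apply the fundamental lemma; a convenient device is to build variations of exponential-tilting type from test functions orthogonal to $1$ and $H$, for which positivity for small $\tau$ is automatic. In parallel one must control integrability throughout: existence of $\mathcal E_\rho(H)$, finiteness of $s(\rho)$, convergence of $P(b)$, and the legitimacy of differentiating under the integral sign. On a possibly non-compact symplectic manifold $M$ these integrability questions, rather than the Lagrange-multiplier computation, are where the real care lies.
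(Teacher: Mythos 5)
Your proposal is correct and follows essentially the same route as the paper: differentiate the two constraints, compute the first variation of $s$ as $-\int_M\delta\rho\,(1+\log\rho)\,\d\lambda_\omega$, and conclude by the Lagrange multiplier argument that $1+\log\rho+a+bH=0$, with the normalization fixing $a$ in terms of $b$. You are in fact somewhat more explicit than the paper (which simply invokes ``a well known result in calculus of variations'') about the need to produce enough admissible positive variations and about the converse direction, but the substance is identical.
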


\begin{proof}
Let $\tau\mapsto\rho_\tau$ be a smooth infinitesimal variation of $\rho$ with fixed mean value of $H$.
Since $\displaystyle \int_M\rho_\tau\d\lambda_\omega$ and $\displaystyle\int_M\rho_\tau H\d\lambda_\omega$
do not depend on $\tau$, it satisfies, for all $\tau\in]-\varepsilon,\varepsilon[$\,,
 $$\int_M\frac{\partial\rho(\tau,z)}{\partial\tau}\d\lambda_\omega(z)=0\,,
   \int_M\frac{\partial\rho(\tau,z)}{\partial\tau}H(z)\d\lambda_\omega(z)=0\,.
 $$
Moreover an easy calculation leads to 
 $$\frac{\d s(\rho_\tau)}{\d \tau}\Bigm|_{\tau=0}=-\int_M
    \frac{\partial\rho(\tau,z)}{\partial\tau}\Bigm|_{\tau=0}
     (1+\log\bigl(\rho(z)\bigr)\d\lambda_\omega(z)\,.
 $$
A well known result in calculus of variations shows that
the entropy function $s$ is stationary at $\rho$
with respect to smooth infinitesimal variations of $\rho$ with fixed mean value of $H$, if and only if there exist two real constants $a$ and  $b$, called \emph{Lagrange multipliers}, such that, for all $z\in M$,
 $$1+\log(\rho)+a+bH=0\,,
 $$
which leads to
 $$\rho=\exp(-1-a-bH)\,.
 $$
By writing that $\displaystyle\int_M\rho\d\lambda_\omega=1$, we see that $a$ is determined by $b$:
 $$\exp(1+a)=P(b)=\int_M\exp(-bH)\d\lambda_\omega\,.\eqno{\qedhere}
 $$
\end{proof}

\begin{defis}\label{DefiGibbsStatePartitionFunction}
Let $H:M\to\RR$ be a  smooth Hamiltonian on a symplectic manifold $(M,\omega)$. For each $b\in \RR$ such that the integral on the right side of the equality
 $$P(b)=\int_M\exp(-bH)\d\lambda_\omega
 $$
converges, the smooth probability measure on $M$ with density (with respect to the Liouville measure)
 $$\rho(b)=\frac{1}{P(b)}\exp\bigl(-bH\bigr)
 $$
is called the \emph{Gibbs statistical state} associated to $b$. The function $P:b\mapsto P(b)$ is called the
\emph{partition function}.
\end{defis}

The following proposition shows that the entropy function, not only is stationary at any Gibbs statistical state, but in a certain sense attains at that state a strict maximum.

\begin{prop}\label{MaximalityEntropy}
Let $H:M\to\RR$ be a  smooth Hamiltonian on a symplectic manifold $(M,\omega)$ and $b\in \RR$ be such that 
the integral defining the value $P(b)$ of the partition function $P$ at $b$ converges. Let
 $$\rho_b=\frac{1}{P(b)}\exp(-bH)
 $$
be the probability density of the Gibbs statistical state associated to $b$. We assume that
the Hamiltonian $H$ is bounded by below, i.e. that there exists a constant $m$ such that 
$m\leq H(z)$ for any $z\in M$. Then the integral defining
 $${\mathcal E}_{\rho_b}(H)=\int_M\rho_b H\d\lambda_\omega
 $$
converges. For any other smooth probability density $\rho_1$ such that
 $${\mathcal E}_{\rho_1}(H)={\mathcal E}_{\rho_b}(H)\,,
 $$   
we have
 $$s(\rho_1)\leq s(\rho_b)\,,$$
and the equality $s(\rho_1)=s(\rho_b)$ holds if and only if $\rho_1=\rho_b$. 
\end{prop}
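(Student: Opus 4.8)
The plan is to derive both assertions from the elementary \emph{Gibbs inequality} $\log u\le u-1$ for $u>0$, with equality only at $u=1$, applied to the two densities $\rho_1$ and $\rho_b$. It is convenient to reduce first to the case $H\ge 0$: replacing $H$ by $H-m$ multiplies $\exp(-bH)$ by the constant $\exp(bm)$, which cancels against $P(b)$ in the definition of $\rho_b$; hence $\rho_b$, its entropy $s(\rho_b)$, and the constraint $\mathcal{E}_{\rho_1}(H)=\mathcal{E}_{\rho_b}(H)$ are all unchanged, while $\mathcal{E}_{\rho_b}(H)$ merely shifts by $m$. So I assume $m=0$ from now on.

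For the first assertion I must show $\int_M H\exp(-bH)\,\d\lambda_\omega<\infty$. For $0<b'<b$ I would write $H\exp(-bH)=\bigl(H\exp(-(b-b')H)\bigr)\exp(-b'H)$ and use that $u\mapsto u\exp(-(b-b')u)$ is bounded on $[0,\infty)$ by $\frac{1}{(b-b')e}$, so that $\mathcal{E}_{\rho_b}(H)\le\frac{P(b')}{(b-b')e\,P(b)}$. Since $H\ge 0$, the function $P$ is finite on a half-line $[b_*,+\infty)$, and the bound works as soon as some $b'$ with $b_*\le b'<b$ exists, i.e. as soon as $b$ is interior to that half-line. I expect this integrability to be the main obstacle: convergence of $P$ at the single value $b$ does not by itself force $\mathcal{E}_{\rho_b}(H)$ to be finite, and the role of the hypothesis that $H$ is bounded below is precisely to exclude the competing divergence coming from regions where $H\to-\infty$. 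Once $\mathcal{E}_{\rho_b}(H)$ is finite, the identity $s(\rho_b)=b\,\mathcal{E}_{\rho_b}(H)+\log P(b)$, which follows from $\log\rho_b=-bH-\log P(b)$ and $\int_M\rho_b\,\d\lambda_\omega=1$, shows $s(\rho_b)$ is finite as well.

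For the inequality I would rewrite the entropy difference as a relative entropy. Using $\log\rho_b=-bH-\log P(b)$ and the constraint $\mathcal{E}_{\rho_1}(H)=\mathcal{E}_{\rho_b}(H)$, one checks that $\int_M\rho_1\log\rho_b\,\d\lambda_\omega$ and $\int_M\rho_b\log\rho_b\,\d\lambda_\omega$ both equal $-b\,\mathcal{E}_{\rho_b}(H)-\log P(b)$; the finiteness of the former uses that $\mathcal{E}_{\rho_1}(H)$ is finite, which is guaranteed by the constraint together with the first part. Hence
\begin{align*}
 s(\rho_b)-s(\rho_1)
  &=\int_M\rho_1\log\rho_1\,\d\lambda_\omega-\int_M\rho_b\log\rho_b\,\d\lambda_\omega\\
  &=\int_M\rho_1\log\rho_1\,\d\lambda_\omega-\int_M\rho_1\log\rho_b\,\d\lambda_\omega
   =\int_M\rho_1\log\frac{\rho_1}{\rho_b}\,\d\lambda_\omega\,.
\end{align*}

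Finally I would show this last integral is nonnegative, with equality exactly when $\rho_1=\rho_b$. Applying $\log u\le u-1$ to $u=\rho_b/\rho_1$ on the set $\{\rho_1>0\}$ gives $\rho_1\log\frac{\rho_b}{\rho_1}\le\rho_b-\rho_1$, whence, using the convention $0\log 0=0$ on $\{\rho_1=0\}$,
\begin{align*}
 -\int_M\rho_1\log\frac{\rho_1}{\rho_b}\,\d\lambda_\omega
  \le\int_M(\rho_b-\rho_1)\,\d\lambda_\omega=1-1=0\,,
\end{align*}
so $s(\rho_b)-s(\rho_1)\ge 0$. Because $u\mapsto u-1-\log u$ is strictly positive for $u\ne 1$, equality forces $\rho_b=\rho_1$ almost everywhere, hence everywhere by continuity of the two smooth densities. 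The remaining things to watch are this strict-equality bookkeeping, the care needed on the null set $\{\rho_1=0\}$, and the justification that every displayed integral converges.
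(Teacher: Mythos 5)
Your main argument is essentially the paper's: the tangent\-/line inequality the paper writes for the convex function $x\mapsto x\log(1/x)$ at the point $x_0=\rho_b(z)$ becomes, after the substitution $u=\rho_b/\rho_1$, exactly your Gibbs inequality $\log u\le u-1$, and both proofs hinge on the same identity $\int_M\rho_1\log\rho_b\,\d\lambda_\omega=\int_M\rho_b\log\rho_b\,\d\lambda_\omega$ forced by the constraint ${\mathcal E}_{\rho_1}(H)={\mathcal E}_{\rho_b}(H)$; your handling of the equality case (strict positivity of $u-1-\log u$ off $u=1$, plus the set $\{\rho_1=0\}$, on which equality forces $\rho_b=0$ as well --- note this set need not be null) also matches the paper's. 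The one point where you genuinely depart is the convergence of ${\mathcal E}_{\rho_b}(H)$, and your suspicion is well founded: the paper disposes of it by bounding $\rho_b\exp(-bH)$ by $\exp(-mb)\rho_b$, which establishes the convergence of $P(2b)$ but says nothing about $\int_M H\exp(-bH)\,\d\lambda_\omega$. Your estimate $H\exp(-bH)\le\frac{1}{(b-b')e}\,\exp(-b'H)$ (after normalising $m=0$) is a correct repair, at the price of assuming $b$ interior to the half-line on which $P$ is finite; that assumption is not in the statement and cannot simply be dropped, since on a symplectic manifold of infinite Liouville volume one can arrange $P(b)<\infty$ with $H$ bounded below and yet ${\mathcal E}_{\rho_b}(H)=+\infty$ (take $M=\RR^2$ and $H$ growing like $\frac{2}{b}(\log r+\log\log r)$). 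So for the entropy inequality itself your proof is the paper's proof in information\-/theoretic clothing; for the integrability claim yours is the more defensible version, and you have correctly located the weak point of the published argument.
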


\begin{proof}
Since $m\leq H$, the function $\rho_b\exp(-bH)$  satisfies 
$0\leq \rho_b\exp(-bH)\leq \exp(-mb)\rho_b$, 
therefore is integrable on $M$. Let $\rho_1$ 
be any smooth probability density on $M$ satisfying
${\mathcal E}_{\rho_1}(H)={\mathcal E}_{\rho_b}(H)$.   
The function defined on $\RR^+$
 $$x\mapsto h(x)=\begin{cases}
       x\log\left({\displaystyle\frac{1}{x}}\right)& \text{ if $x>0$}\\
         0& \text{if $x=0$}\end{cases}
 $$
being convex, its graph is below the tangent at any of its points $\bigl(x_0, h(x_0)\bigr)$. We therefore have, for all $x>0$ and $x_0>0$,
 $$h(x)\leq h(x_0)-(1+\log x_0)(x-x_0)=x_0 -x(1+\log x_0)\,.
 $$  
With $x=\rho_1(z)$ and $x_0=\rho_b(z)$, $z$ being any element in
$M$, that inequality becomes
 $$h\bigl(\rho_1(z)\bigr)=\rho_1(z)\log\left(\frac{1}{\rho_1(z)}\right)\leq \rho_b(z)
    -\bigl(1+\log \rho_b(z)\bigr)\rho_1(z)\,.
 $$
By integration over $M$, using the fact that $\rho_b$ is the probability density of the Gibbs state associated to $b$, we obtain
 $$s(\rho_1)\leq 1 -1 -\int_M\rho_1\log \rho_b\d\lambda_\omega=s(\rho_b)\,.
 $$
We have proven the inequality $s(\rho_1)\leq s(\rho_b)$. If $\rho_1=\rho_b$, we have of course 
the equality $s(\rho_1)=s(\rho_b)$. Conversely if $s(\rho_1)=s(\rho_b)$, the functions
defined on $M$
 $$z\mapsto\varphi_1(z)=\rho_1(z)\log\left(\frac{1}{\rho_1(z)}\right)\quad\hbox{and}\quad
   z\mapsto\varphi(z)=\rho_b(z)-\bigl(1+\log \rho_b(z)\bigr)\rho_1(z)
 $$
are continuous on $M$ except, maybe, for $\varphi$, at points $z$ at which $\rho_b(z)=0$ and 
$\rho_1(z)\neq 0$, but the set of such points is of measure $0$ since $\varphi$ is integrable.
They satisfy the inequality $\varphi_1\leq \varphi$. Both are integrable on $M$ and have the same integral.
The function $\varphi-\varphi_1$ is everywhere $\geq 0$, 
is integrable on $M$ and its integral is 
$0$. That function is therefore everywhere equal to $0$ on $M$. 
We can write, for any $z\in M$,
 $$\rho_1(z)\log\left(\frac{1}{\rho_1(z)}\right)=\rho_b(z)-\bigl(1+\log \rho_b(z)\bigr)\rho_1(z)\,.\eqno{(*)}
 $$     
For each $z\in M$ such that $\rho_1(z)\neq 0$, we can divide that equality by $\rho_1(z)$. We obtain
 $$\frac{\rho_b(z)}{\rho_1(z)} - \log\left(\frac{\rho_b(z)}{\rho_1(z)}\right)=1\,.$$
Since the function $x\mapsto x - \log x$ reaches its minimum, equal to $1$, for a unique value of $x>0$,
that value being $1$, we see that for each $z\in M$ at which $\rho_1(z)>0$, we have $\rho_1(z)=\rho_b(z)$.
At points $z\in M$ at which $\rho_1(z)=0$, the above equality $(*)$ shows that $\rho_b(z)=0$. Therefore 
$\rho_1=\rho_b$.   
\end{proof}

The following proposition shows that a Gibbs statistical state remains invariant under the flow of the Hamiltonian vector field $X_H$. In that sense, one can say that a Gibbs statistical state is a
statistical equilibrium state.

\begin{prop}\label{InvarianceOfGibbsState}   
Let $H$ be a smooth Hamiltonian bounded by below on a symplectic manifold $(M,\omega)$, $b\in \RR$ be
such that the integral defining the value $P(b)$ of the partition function $P$ at $b$ converges. The 
Gibbs state associated to $b$ remains invariant under the flow of of the Hamiltonian vector field $X_H$.  
\end{prop}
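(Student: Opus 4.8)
The plan is to exploit the fact that the Gibbs density $\rho_b=\frac{1}{P(b)}\exp(-bH)$ depends on the point $z\in M$ only through the value $H(z)$, and to show that the flow $\Phi^{X_H}_t$ of $X_H$ leaves invariant \emph{both} ingredients out of which the Gibbs measure is built: the Liouville measure $\lambda_\omega$ and the Hamiltonian $H$. Once these two invariances are established, invariance of the measure $\mu_b$ with density $\rho_b$ with respect to $\lambda_\omega$ follows immediately, since a measure that is an everywhere-defined function of invariants, taken against an invariant reference measure, is itself invariant.

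First I would recall that $X_H$ is a Hamiltonian vector field, hence (by Proposition~\ref{HamiltonImpliesSymplectic}) a symplectic vector field, so that for each $t$ the diffeomorphism $\Phi^{X_H}_t$ is a symplectomorphism wherever it is defined. As already used in~\ref{TimeVariationOfStatisticalState}, a symplectomorphism preserves the Liouville measure, so $(\Phi^{X_H}_t)_*\lambda_\omega=\lambda_\omega$. Next I would show that $H$ is a first integral of its own flow: since
 $${\mathcal L}(X_H)H=\mathi(X_H)\d H=\{H,H\}=0\,,$$
the function $t\mapsto H\bigl(\Phi^{X_H}_t(z)\bigr)$ has vanishing derivative, whence $H\circ\Phi^{X_H}_t=H$ for every $t$ for which the flow is defined. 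Consequently
 $$\rho_b\circ\Phi^{X_H}_t=\frac{1}{P(b)}\exp\bigl(-b\,H\circ\Phi^{X_H}_t\bigr)=\frac{1}{P(b)}\exp(-bH)=\rho_b\,.$$

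To conclude I would invoke the evolution law for statistical states established in~\ref{TimeVariationOfStatisticalState}: the density at time $t$ of a statistical state whose density at time $0$ is $\rho_b$ equals $\rho_b\circ\Phi^{X_H}_{-t}$. By the computation just made this equals $\rho_b$, so the statistical state is literally unchanged by the flow, which is exactly the asserted invariance. I do not expect a genuine obstacle here; the argument is a direct assembly of results already proved. The only point requiring a little care is the interplay of the two invariances — namely that invariance of the density function together with invariance of the reference measure yields invariance of the resulting probability measure — together with the mild technical caveat that $\Phi^{X_H}_t$ need not be complete, so the identities above are to be read pointwise on the (open) domain where the flow is defined, which suffices for the statement.
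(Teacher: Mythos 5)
Your proof is correct and follows essentially the same route as the paper: the paper's argument is precisely that $H$ (hence $\rho_b$) is constant along the integral curves of $X_H$ and that the flow preserves the Liouville measure, so the Gibbs measure is invariant. You merely make explicit the two supporting facts (via $\{H,H\}=0$ and the evolution law of \ref{TimeVariationOfStatisticalState}) that the paper takes for granted.
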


\begin{proof}
The density $\rho_b$ of the Gibbs state associated to $b$, with respect to the Liouville measure 
$\lambda_\omega$, is
 $$\rho_b= \frac{1}{P(b)}\exp(-bH)\,.
 $$
Since $H$ is constant along each integral curve of $X_H$, $\rho_b$ too is constant along each integral 
curve of $X_H$. Moreover, the Liouville measure $\lambda_\omega$ remains invariant under the flow of $X_H$.
Therefore the Gibbs probability measure associated to $b$ too remains invariant under that flow.
\end{proof}

\subsection{Thermodynamic equilibria and thermodynamic functions}\label{ThermodynamicEquilibria}

\subsubsection{Assumptions made in this section.\quad}\nobreak
\label{AssumptionsInThermodynamicEquilibria} 
Any Hamiltonian $H$ defined on a symplectic manifold
$(M,\omega)$ considered in this section will be assumed to be smooth, bounded by below and such that for
any real $b>0$, each one of the three functions, defined on $M$,
$z\mapsto \exp\bigl(-bH(z)\bigr)$, $z\mapsto \big| H(z)\big|\exp\bigl(-bH(z)\bigr)$ and
$z\mapsto \bigl(H(z)\bigr)^2\exp\bigl(-bH(z)\bigr)$
is everywhere smaller than some function defined on $M$ integrable with respect to the Liouville measure
$\lambda_\omega$. The integrals which define
 $$P(b)=\int_M\exp(-bH)\d\lambda_\omega\quad \hbox{and}\quad 
        {\mathcal E}_{\rho_b}(H)=\int_M H\exp(-bH)\d \lambda_\omega
 $$
therefore converge. 

\begin{prop}\label{PropertiesOfThermodynamicFunctions}
Let $H$ be a Hamiltonian defined on a symplectic manifold $(M,\omega)$ satisfying the assumptions
indicated in \ref{AssumptionsInThermodynamicEquilibria}. For any real $b>0$ let
 $$ P(b)=\int_M\exp(-bH)\d\lambda_\omega\quad\hbox{and}\quad \rho_b=\frac{1}{P(b)}\exp(-bH)
 $$
be the value at $b$ of the partition function $P$ and the probability density of the Gibbs statistical state associated to $b$, and
 $$E(b)={\mathcal E}_{\rho_b}(H)=\frac{1}{P(b)}\int_MH\exp(-bH)\d\lambda_\omega
 $$
be the mean value of $H$ with respect to the probability density $\rho_b$. The first and second derivatives
with respect to $b$ of the partition function $P$  exist, are continuous functions of $b$ given by
 $$\frac{\d P(b)}{\d b}=-P(b)E(b)\,,\quad 
    \frac{\d^2 P(b)}{\d b^2}=\int_M H^2\exp(-bH)\d\lambda_\omega= P(b){\mathcal E}_{\rho_b}(H^2)\,.
 $$
The derivative with respect to $b$ of the function $E$ exists and is a continuous function of $b$ given by 
  $$\frac{\d E(b)}{\d b} = - \frac{1}{P(b)}\int_M\bigl(H-{\mathcal E}_{\rho_b}(H)\bigr)^2\d\lambda_\omega
                         = - {\mathcal E}_{\rho_b}\Bigl(\bigl( H-{\mathcal E}_{\rho_b}(H)\bigr)^2\Bigr)\,.
 $$
Let $S(b)$ be the entropy $s(\rho_b)$ of the Gibbs statistical state associated to $b$. The function $S$
can be expressed in terms of $P$ and $E$ as
 $$S(b)=\log\bigl(P(b)\bigr)+bE(b)\,.
 $$
Its derivative with respect to $b$ exists and is a continuous function of $b$ given by
 $$\frac{\d S(b)}{\d b}=b\frac{\d E(b)}{\d b}\,.
 $$
\end{prop}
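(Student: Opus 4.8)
The plan is to establish the five formulas in the order stated, the central analytic input being the legitimacy of differentiating the integrals defining $P$ and $E$ under the integral sign; everything else reduces to the chain rule and the elementary identity relating $E$ to the logarithmic derivative of $P$.

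First I would justify differentiation under the integral sign. The point requiring care is that the domination hypotheses in \ref{AssumptionsInThermodynamicEquilibria} are stated pointwise in $b$, whereas to differentiate at a value $b_0>0$ one needs an integrable majorant valid uniformly for $b$ in a neighbourhood of $b_0$. To produce one, fix $\delta>0$ with $b_0-\delta>0$ and observe that, since $H$ is bounded below, for every $b\in[b_0-\delta,b_0+\delta]$ one has $\exp(-bH)\leq \exp\bigl(-(b_0-\delta)H\bigr)+\exp\bigl(-(b_0+\delta)H\bigr)$, and similarly after multiplication by $|H|$ or $H^2$. The right-hand sides are finite sums of functions assumed integrable in \ref{AssumptionsInThermodynamicEquilibria}, so each of $\exp(-bH)$, $|H|\exp(-bH)$ and $H^2\exp(-bH)$ admits a fixed integrable majorant on the neighbourhood. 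The classical theorem on differentiation of an integral depending on a parameter then applies and yields $\frac{\d P(b)}{\d b}=-\int_M H\exp(-bH)\,\d\lambda_\omega=-P(b)E(b)$ and $\frac{\d^2P(b)}{\d b^2}=\int_M H^2\exp(-bH)\,\d\lambda_\omega=P(b){\mathcal E}_{\rho_b}(H^2)$, both derivatives being continuous in $b$.

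Next I would treat $E$ and $S$ by algebra alone. From the first formula, $E=-P'/P=-\frac{\d}{\d b}\log P$, so differentiating once more gives $\frac{\d E}{\d b}=-\frac{P''}{P}+\left(\frac{P'}{P}\right)^2=-{\mathcal E}_{\rho_b}(H^2)+\bigl({\mathcal E}_{\rho_b}(H)\bigr)^2$, which is exactly $-{\mathcal E}_{\rho_b}\bigl((H-{\mathcal E}_{\rho_b}(H))^2\bigr)$, the negative of the variance of $H$; its continuity follows from that of $P'$ and $P''$ together with $P>0$. For the entropy, I would substitute $\log\rho_b=-\log P(b)-bH$ into $S(b)=s(\rho_b)=-\int_M\rho_b\log\rho_b\,\d\lambda_\omega$ and use $\int_M\rho_b\,\d\lambda_\omega=1$ and $\int_M H\rho_b\,\d\lambda_\omega=E(b)$ to obtain $S(b)=\log P(b)+bE(b)$. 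Finally, differentiating this relation and using $P'/P=-E$ gives $\frac{\d S}{\d b}=\frac{P'}{P}+E+b\frac{\d E}{\d b}=b\frac{\d E}{\d b}$.

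The only genuine obstacle is the first step: securing the uniform integrable majorant that allows $\frac{\d}{\d b}$ and $\int_M$ to be interchanged, where the sign of the lower bound $m$ for $H$ forces one to bound $\exp(-bH)$ using both endpoints of the $b$-interval rather than just one. Once that interchange is justified, the remaining identities are purely formal manipulations of $P$, $P'$, $P''$ and the definition of the entropy, involving no further analytic estimates.
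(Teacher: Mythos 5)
Your proof is correct and follows essentially the same route as the paper's: differentiation under the integral sign for $P$ (hence $E=-P'/P$), the variance identity ${\mathcal E}_{\rho_b}(H^2)-\bigl({\mathcal E}_{\rho_b}(H)\bigr)^2={\mathcal E}_{\rho_b}\bigl((H-{\mathcal E}_{\rho_b}(H))^2\bigr)$ for $E'$, and substitution of $\log\rho_b=-\log P(b)-bH$ for the entropy. The one thing you add is the explicit uniform integrable majorant $\exp(-bH)\leq\exp\bigl(-(b_0-\delta)H\bigr)+\exp\bigl(-(b_0+\delta)H\bigr)$ on a neighbourhood of $b_0$, which the paper's one-line appeal to the assumptions leaves implicit; your construction is correct and legitimately closes that small gap.
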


\begin{proof}
Using the assumptions \ref{AssumptionsInThermodynamicEquilibria}, 
we see that the functions $b\mapsto P(b)$ and 
$b\mapsto {\mathcal E}_{\rho_b}(H)=E(b)$, defined by integrals on $M$, have a derivative with respect to $b$ which is continuous and which can be calculated by derivation under the sign $\displaystyle \int_M$.  
The indicated results easily follow, if we observe that for any function $f$ on $M$ such that
${\mathcal E}_{\rho_b}(f)$ and  ${\mathcal E}_{\rho_b}(f^2)$ exist, we have the formula, well known in Probability theory,
 $${\mathcal E}_{\rho_b}(f^2)-\bigl({\mathcal E}_{\rho_b}(f)\bigr)^2
         ={\mathcal E}_{\rho_b}\Bigl(\bigl(f -{\mathcal E}_{\rho_b}(f)\bigr)^2\Bigr)\,.\eqno{\qedhere}
 $$
\end{proof}

\subsubsection{Physical meaning of the introduced functions}\label{PhysicalMeaningOfThermodynamicFunctions}
Let us consider a physical system, for example a gas contained in a vessel bounded by 
rigid, thermally insulated walls, at rest in a Galilean reference frame. We assume that 
its evolution can be mathematically described by means of a Hamiltonian system on a 
symplectic manifold $(M,\omega)$ whose Hamiltonian $H$ satisfies the assumptions 
\ref{AssumptionsInThermodynamicEquilibria}. 
For physicists, a Gibbs statistical state, \emph{i.e.} a
probability measure of density $\displaystyle \rho_b=\frac{1}{P(b)}\exp(-bH)$ on $M$, 
is a \emph{thermodynamic equilibrium} of the physical system. The set of possible 
thermodynamic equilibria of the system is therefore indexed by a real parameter $b>0$. 
The following argument will show what physical meaning can have that parameter.
\par\smallskip

Let us consider two similar physical systems, mathematically described by two
Hamiltonian systems, of Hamiltonians $H_1$ on the symplectic manifold $(M_1,\omega_1)$ 
and $H_2$ on the symplectic manifold $(M_2,\omega_2)$. We first assume that 
they are independent and both in thermodynamic equilibrium, with different values 
$b_1$ and $b_2$ of the parameter $b$. We denote by $E_1(b_1)$ and $E_2(b_2)$ 
the mean values of $H_1$ on the manifold $M_1$ with respect to the Gibbs state 
of density $\rho_{1,b_1}$
and of $H_2$ on the manifold $M_2$ with respect to the Gibbs state of density 
$\rho_{2,b_2}$. We assume now that the two systems are coupled in a way allowing 
an exchange of energy. For example, the two vessels
containing the two gases can be separated by a wall allowing a heat transfer between 
them. Coupled together, they make a new physical system, mathematically described 
by a Hamiltonian system on the
symplectic manifold $(M1\times M_2, p_1^*\omega_1+p_2^*\omega_2)$, where
$p_1:M_1\times M_2\to M_1$ and $p_2:M_1\times M_2\to M_2$ are the canonical projections. 
The Hamiltonian of this new system can be made as close to
$H_1\circ p_1+H_2\circ p_2$ as one wishes, by making very small 
the coupling between the two systems.
The mean value of the Hamiltonian of the new system is therefore 
very close to $E_1(b_1)+E_2(b_2)$. 
When the total system will reach a state of thermodynamic equilibrium, 
the probability densities 
of the Gibbs states of its two parts, $\rho_{1,b'}$ on $M_1$ and 
$\rho_{2,b'}$ on $M_2$ will 
be indexed by the same real number $b'>0$, which must be such that
 $$E_1(b')+E_2(b')=E_1(b_1)+E_2(b_2)\,.
 $$   
By \ref{PropertiesOfThermodynamicFunctions}, we have, for all $b>0$,
 $$\frac{\d E_1(b)}{\d b}\leq 0\,,\quad \frac{\d E_2(b)}{\d b}\leq 0\,.
 $$ 
Therefore $b'$ must lie between $b_1$ and $b_2$. If, for example, $b_1<b_2$, we see that
$E_1(b')\leq E_1(b_1)$ and $E_2(b')\geq E_2(b_2)$. In order to reach a state of thermodynamic 
equilibrium, energy must be transferred from the part of the system where $b$ has the smallest value, towards the part of the system where $b$ has the highest value, until, at thermodynamic equilibrium, $b$ has the same value everywhere. Everyday experience shows that thermal energy flows from parts of a system where the temperature is higher, towards parts where it is lower. For this reason physicists consider the real variable
$b$ as a way to appreciate the temperature of a physical system in a state of thermodynamic equilibrium. 
More precisely, they state that
 $$b=\frac{1}{kT}$$
where $T$ is the absolute temperature and $k$ a constant depending on the choice of units of energy and temperature, called \emph{Boltzmann's constant} in honour of the great Austrian scientist Ludwig
Eduard Boltzmann (1844--1906).

For a physical system mathematically described by a Hamiltonian system on a symplectic manifold
$(M,\omega)$, with $H$ as Hamiltonian, in a state of thermodynamic equilibrium, $E(b)$ and $S(b)$ are
the \emph{internal energy} and the \emph{entropy} of the system.

\subsubsection{Towards thermodynamic equilibrium}\label{TowardsThermodynamicEquilibrium}
Everyday experience shows that a physical system, when submitted to external 
conditions which remain unchanged for a sufficiently long time, 
very often reaches a state of thermodynamic equilibrium. At first look, it seems that
Lagrangian or Hamiltonian systems with time-independent Lagrangians or Hamiltonians 
cannot exhibit a similar behaviour. Let us indeed consider a
mechanical system whose configuration space is a smooth manifold $N$, described in the Lagrangian formalism by a smooth time-independent
hyper-regular Lagarangian $L:TN\to \RR$ or, in the Hamiltonian formalism, by the associated
Hamiltonian $H_L:T^*N\to\RR$. Let $t\mapsto \vect{x(t)}$ be a motion of that system, 
$\vect{x_0}=\vect{x(t_0)}$ and $\vect{x_1}=\vect{x(t_0)}$ be the configurations of the system for that motion at times $t_0$ and $t_1$. There exists another motion 
$t\mapsto\vect{x'(t)}$ of the system for which $\vect{x'(t_0)}=\vect{x_1}$ and
$\vect{x'(t_1)}=\vect{x_0}$: since the equations of motion are invariant by time reversal,
the motion $t\mapsto\vect{x'(t)}$ is obtained simply by taking as initial condition at time
$t_0$
$\vect{x'(t_0)}=\vect{x(t_1)}$   and 
$\displaystyle\frac{\d \vect{x'(t)}}{\d t}\Bigm|_{t=t_0}
 =-\frac{\d \vect{x(t)}}{\d t}\Bigm|_{t=t_1}$.
Another more serious argument against a kind of thermodynamic behaviour
of Lagarangian or Hamiltonian systems rests on
the famous recurrence theorem due to H.~Poincaré 
\cite{Poincare1890}. This theorem asserts indeed that when the useful part of the
phase space of the system is of a finite total measure, almost all points in an 
arbitrarily small open subset of the phase space are recurrent, \emph{i.e.}, the motion starting of such a point at time  $t_0$ repeatedly cross that open subset again and again, infinitely many times when $t\to +\infty$.
\par\smallskip
 
Let us now consider, instead of perfectly defined states, \emph{i.e.}, points in phase space, statistical states, and ask the question:
When at time $t=t_0$ a Hamiltonian system on a symplectic manifold $(M,\omega)$
is in a statistical state given by some probability measure of density
$\rho_0$ with respect to the Liouville measure $\lambda_\omega$, does its statistical state
converge, when $t\to+\infty$, towards the probability measure of a Gibbs state?
This question should be made more precise by specifying what physical meaning has a statistical state and in what mathematical sense a statistical state can converge towards the probability measure of a Gibbs state. A positive partial answer was given by Ludwig Boltzmann when, developing his kinetic theory of gases, he proved his famous (but controversed)
\emph{\^Eta theorem} stating that the entropy of the statistical state of a gas of small particles is a monotonously increasing function of time.
This question, linked with time irreversibility in Physics, is still the subject of important researches, both by physicists and by mathematicians. The reader is referred to
the paper \cite{Balian2005} by R.~Balian for a more thorough discussion of that question.  

\subsection{Examples of thermodynamic equilibria}\label{ExamplesThermodynamicEquilibria}

\subsubsection{Classical monoatomic ideal gas}\label{ClassicalIdealGas}
In Classical Mechanics, a dilute gas contained in a vessel at rest in a Galilean reference frame
is mathematically described by a Hamiltonian system made by a large number of very small massive 
particles, which interact by very brief collisions between themselves or with the walls of the vessel, 
whose motions between two collisions are free. Let us first assume that these particles are material points and that no external field is acting on them, other than that describing the interactions by collisions 
with the walls of the vessel. 
\par\smallskip

The Hamiltonian of one particle in a part of the phase space in which its 
motion is free is simply
 $$\frac{1}{2m}\Vert \vect p\Vert^2=\frac{1}{2m}(p_1^2+p_2^2+p_3^2)\,,\quad
                \hbox{with}\quad \vect p=m\vect v\,,
 $$ 
where $m$ is the mass of the particle, $\vect v$ its velocity vector and $\vect p$ its linear momentum vector
(in the considered Galilean reference frame), $p_1$, $p_2$ and $p_3$ the components of $\vect p$ in a
fixed orhtonormal basis of the physical space.
\par\smallskip

Let $N$ be the total number of particles, which may not have all the same mass. 
We use a integer $i\in\{1,\ 2,\ \ldots,\ N\}$ to label 
the particles and denote by $m_i$, $\vect{x_i}$, $\vect{v_i}$, $\vect{p_i}$ the mass and the
vectors position, velocity and linear momentum of the $i$-th particle. 
\par\smallskip

The Hamiltonian of the gas is therefore
 $$ H = \sum_{i=1}^N\frac{1}{2 m_i}\Vert \vect{p_i}\Vert^2 +\ \hbox{terms involving the collisions between particles and with the walls}\,.
 $$
Interactions of the particles with the walls of the vessel
are essential for allowing the motions of particles to remain confined. Interactions between particles
are essential to allow the exchanges between them of energy and momentum, which play an important part 
in the evolution with time of the statistical state of the system. However it appears that while these terms
are very important to determine the system's evolution with time, they can be neglected, when the gas is dilute enough, if we only want to determine the final statistical state of the system, once
a thermodynamic equilibrium is established. The Hamiltonian used will therefore be
 $$ H = \sum_{i=1}^N\frac{1}{2 m_i}\Vert \vect{p_i}\Vert^2\,.
 $$
The partition function is
 $$ P(b)=\int_M\exp(-bH)\d\lambda_\omega=\int_D\exp\left(-b\sum_{i=1}^N\frac{1}{2m_i}\Vert \vect 
             p_i\Vert^2\right)\prod_{i=1}^N(\d\vect{x_i}\d\vect{p_i})\,,
 $$
where $D$ is the domain of the $6N$-dimensional space spanned by the position vectors $\vect{x_i}$ and linear
momentum vectors $\vect{p_i}$ of the particles in which all the $\vect{x_i}$ lie within the vessel containing the gas. An easy calculation leads to
 $$P(b)=V^N\left(\frac{2\pi}{b}\right)^{3N/2}\prod_{i=1}^N ({m_i}^{3/2})
       =\prod_{i=1}^N\left[V\left(\frac{2\pi m_i}{b}\right)^{3/2}\right]\,,
 $$
where $V$ is the volume of the vessel which contains the gas. The probability density of the Gibbs
state associated to $b$, with respect to the Liouville measure, therefore is
 $$\rho_b=\prod_{i=1}^N\left[\frac{1}{V}\left(\frac{b}{2\pi m_i}\right)^{3/2}\exp
    \left(\frac{-b\Vert\vect{p_i}\Vert^2}{2m_i}\right)\right]\,.
 $$
We observe that $\rho_b$ is the product of the probability densities $\rho_{i,b}$ for the $i$-th particle
 $$\rho_{i,b}=\frac{1}{V}\left(\frac{b}{2\pi m_i}\right)^{3/2}\exp
    \left(\frac{-b\Vert\vect{p_i}\Vert^2}{2m_i}\right)\,.
 $$
The $2N$ stochastic vectors $\vect{x_i}$ and $\vect{p_i}$, $i=1,\ \ldots\,,\ N$ are therefore independent.
The position $\vect{x_i}$ of the $i$-th particle is uniformly distributed in the volume of the vessel, while the probability measure of the its linear momentum $\vect{p_i}$ is the classical
\emph{Maxwell-Boltzmann probability distribution of linear momentum} for an ideal gas of particles 
of mass $m_i$, first obtained by Maxwell in 1860. 
Moreover we see that the three components $p_{i\,1}$, $p_{i\,2}$ and $p_{i\,3}$ of the linear momentum 
$\vect{p_i}$ in an orhonormal basis of the physical space are independent stochastic variables. 
\par\smallskip

By using the formulae  given in \ref{PropertiesOfThermodynamicFunctions} the internal energy $E(b)$ and the entropy $S(b)$ of the gas can be easily deduced from the partition function $P(b)$. Their expressions are
 $$E(b)=\frac{3N}{2b}\,,\quad 
   S(b)=\frac{3}{2}\sum_{i=1}^N \log m_i + \left(\frac{3}{2}\bigl(1+\log(2\pi)\bigr) 
       + \log V\right)N-\frac{3N}{2}\log b\,.
 $$
We see that each of the $N$ particles present in the gas has the same contribution 
$\displaystyle\frac{3}{2b}$ to the internal energy $E(b)$, which does not depend on the mass of the particle. Even more: each degree of freedom of each particle, \emph{i.e.} each of the the three components of the the linear momentum of the particle on the three axes of an orthonormal basis, has the same contribution 
$\displaystyle\frac{1}{2b}$ to the internal energy $E(b)$. This result is known in Physics under the name \emph{Theorem of equipartition of the energy at a thermodynamic equilibrium}. It can be easily generalized
for polyatomic gases, in which a particle may carry, in addition to the kinetic energy due to the velocity of its centre of mass, a kinetic energy due to the particle's rotation around its centre of mass. The reader can consult the books by Souriau \cite{Souriau1969} and Mackey \cite{Mackey1963} where the kinetic theory of polyatomic gases is discussed.  
\par\smallskip
  
The pressure in the gas, denoted by $\Pi(b)$ because the notation $P(b)$ is already used for the partition function, is due to the change of linear momentum of the particles which occurs at a collision of the particle with the walls of the vessel containing rhe gas (or with a probe used to measure that pressure). A classical argument in the kinetic theory of gases (see for example \cite{HyperPhysics} or \cite{Gastebois}) leads to
 $$\Pi(b)= \frac{2}{3}\frac{E(b)}{V}=\frac{Nb}{V}\,.
 $$
This formula is the well known \emph{equation of state} of an ideal monoatomic gas relating the number of particles by unit of volume, the pressure and the temperature.
\par\smallskip

With $\displaystyle b=\frac{1}{kT}$, the above expressions are exactly those used in classical 
Thermodynamics for an ideal monoatomic gas.

\subsubsection{Classical ideal monoatomic gas in a gravity field}\label{ClassicalGasWithGravity}
Let us now assume that the gas, contained in a cylindrical vessel of section $\Sigma$ and length $h$, 
with a vertical axis, is submitted to the vertical gravity field of intensity $g$ directed downwards. 
We choose Cartesian coordinates $x$, $y$, $z$, the $z$ axis being vertical directed upwards, 
the bottom of the vessel being in the horizontal surface $z=0$. The Hamiltonian of a free particle 
of mass $m$, position and linear momentum vectors $\vect x$ (components $x$, $y$, $z$) and $\vect p$
(components $p_x$, $p_y$ and $p_z$) is
 $$\frac{1}{2m}(p_x^2+p_y^2+p_z^2) + mgz\,.
 $$
As in the previous section we neglect the parts of the Hamiltonian of the gas corresponding 
to collisions between the particles, or between a particle and the walls of the vessel. 
The Hamiltonian of the gas is therefore
 $$ H = \sum_{i=1}^N\left(\frac{1}{2m_i}(p_{i\,x}^2 + p_{i\,y}^2 + p_{i\,z}^2)+m_igz_i\right)\,.
 $$
Calculations similar to those of the previous section lead to
 \begin{align*}
  P(b)   &=\prod_{i=1}^N\left[\Sigma\left(\frac{2\pi m_i}{b}\right)^{3/2}\frac{1-\exp(-m_igbh)}{m_igb}\right]\,, 
\\
  \rho_b &=\frac{1}{P(b)}\exp\left[-b\sum_{i=1}^N\left(\frac{\Vert\vect{p_i}\Vert^2}{2m_i}+m_igz_i
            \right)\right]\,.\\
 \end{align*}
The expression of $\rho_b$ shows that the $2N$ stochastic vectors $\vect{x_i}$ and $\vect{p_i}$ still are independent, and that for each $i\in\{1,\ldots,N\}$, the probability law of each stochastic vector 
$\vect{p_i}$ is the same as in the absence of gravity, for the same value of $b$. Each stochastic vector 
$\vect{x_i}$ is no more uniformly distributed in the vessel containing the gas: its probability density
is higher at lower altitudes $z$, and this nonuniformity is more important for the heavier particles than for the lighter ones.
\par\smallskip

As in the previous section, the formulae  given in \ref{PropertiesOfThermodynamicFunctions} allow the calculation of $E(b)$ and $S(b)$. We observe that $E(b)$ now includes the potential energy of the gas in the gravity field, therefore should no more be called the internal energy of the gas.

\subsubsection{Relativistic monoatomic ideal gas}\label{RelativistiGas}

In a Galilean reference frame, we consider a relativistic point particle of rest mass $m$, moving at a velocity $\vect v$. We denote by $v$ the modulus of $\vect v$ and by $c$ the modulus of the velocity of light. The motion of the particle can be mathematically described by means of the Euler-Lagrange equations, with the Lagrangian
 $$L=-mc^2\sqrt{1-\frac{v^2}{c^2}}\,.
 $$
The components of the linear momentum $\vect p$ of the particle, in an orthonormal frame at rest in the considered Galilean reference frame, are
 $$p_i=\frac{\partial L}{\partial v^i}=\frac{\displaystyle mv^i}{\displaystyle\sqrt{1-\frac{v^2}{c^2}}}\,,
     \quad\hbox{therefore}\quad \vect p=\frac{\displaystyle m\vect v}{\displaystyle\sqrt{1-\frac{v^2}{c^2}}}\,.
 $$
Denoting by $p$ the modulus of $\vect p$, the Hamiltonian of the particle is
 $$H=\vect p\cdot\vect v-L=\frac{\displaystyle mc^2}{\displaystyle\sqrt{1-\frac{v^2}{c^2}}}
                          =c\sqrt{p^2+m^2c^2}\,.
 $$
Let us consider a relativistic gas, made of $N$ point particles indexed by $i\in\{1,\ldots,N\}$, $m_i$
being the rest mass of the $i$-th particle. With the same assumptions as those made in Section
\ref{ClassicalIdealGas}, we can take for Hamiltonian of the gas
 $$H=c\sum_{i=1}^N\sqrt{{p_i}^2+m^2c^2}\,.
 $$
With the same notations as those of Section \ref{ClassicalIdealGas}, the partition function $P$ of the gas takes the value, for each $b>0$,
 $$P(b)=\int_D\exp\left(-bc\sum_{i=1}^N\sqrt{(p_i)^2+m^2c^2}\right)\prod_{i=1}^N(\d\vect{x_i}\d\vect{p_i})\,. 
 $$
This integral can be expressed in terms of the Bessel function $K_2$, whose expression is, for each $x>0$,
 $$K_2(x)=x\int_0^{+\infty}\exp(-x\ch \chi)\sh^2\chi\ch\chi\d\chi\,.
 $$
We have
 \begin{align*}
   P(b)&=\left(\frac{4\pi Vc}{b}\right)^N\prod_{i=1}^N
         \bigl({m_i}^2K_2(m_ibc^2)\bigr)\,,\\
   \rho_b&=\frac{1}{P(b)}\exp\left(-bc\sum_{i=1}^N\sqrt{{p_i}^2+{m_i}^2c^2}\right)\,.
 \end{align*}
This probability density of the Gibbs state shows that the $2N$ stochastic vectors 
$\vect{x_i}$ and $\vect{p_i}$ are independent, that each $\vect{x_i}$ is uniformly distributed 
in the vessel containing the gas and that the probability density of each $\vect{p_i}$ is 
exactly the probability distribution of the linear momentum of particles in a relativistic gas 
called the \emph{Maxwell-J\"uttner distribution}, obtained by Ferencz J\"uttner (1878--1958) 
in 1911, discussed in the book by the Irish mathematician and physicist J.~L.~Synge \cite{Synge1957}.
\par\smallskip

Of course, the formulae  given in \ref{PropertiesOfThermodynamicFunctions} allow the calculation of 
the internal energy $E(b)$, the entropy $S(b)$ and the pressure $\Pi(b)$ of the relativistic gas.

\subsubsection{Relativistic ideal gas of massless particles}\label{RelativisticGasOfMasslessParticles}
We have seen in the previous chapter that in an inertial reference frame, the Hamiltonian of a relativistic point particle of rest mass $m$ is $c\sqrt{p^2+m^2 c^2}$, where $p$ is the modulus of the linear momentum vector $\vect p$ of the particle in the considered reference frame. This expression still has a meaning when the rest mass $m$ of the particle is $0$. In an orthonormal reference frame, the equations of motion of a particle whose motion is mathematically described by a Hamiltonian system with Hamiltonian 
 $$H=cp=c\sqrt{{p_1}^2+{p_2}^2+{p_3}^2}
 $$ 
are
 \begin{equation*}\left\{
 \begin{aligned}
  \frac{\d x^i}{\d t}&=\frac{\partial H}{\partial p_i}=c\,\frac{p_i}{p}\,\\
  \frac{\d p_i}{\d t}&=-\frac{\partial H}{\partial x^i}=0\,,
 \end{aligned}\quad(1\leq i\leq 3)\,,
 \right. 
 \end{equation*}
which shows that the particle moves on a straight line at the velocity of light $c$. It seems therefore
reasonable to describe a gas of $N$ photons in a vessel of volume $V$ at rest in an inertial reference frame
by a Hamiltonian system, with the Hamiltonian
 $$H=c\sum_{i=1}^N\Vert\vect{p_i}\Vert=c\sum_{i=1}^N\sqrt{{p_{i\,1}}^2+{p_{i\,2}}^2+{p_{i\,3}}^2}\,.
 $$
With the same notations as those used in the previous section, the partition function $P$ of the gas 
takes the value, for each $b>0$,
 $$P(b)=\int_D\exp\left(-bc\sum_{i=1}^N\Vert\vect{p_i}\Vert\right)\prod_{i=1}^N(\d\vect{x_i}\d\vect{p_i})
       =\left(\frac{8\pi V}{c^3b^3}\right)^N\,. 
 $$
The probability density of the corresponding Gibbs state, with respect to the Liouville measure 
$\lambda_\omega=\prod_{i=1}^N(\d\vect{x_i}\d\vect{p_i})$,  is
 $$\rho_b= \prod_{i=1}^N\left(\frac{c^3b^3}{8\pi V}\right)\exp(-bc\Vert\vect{p_i}\Vert)\,.
 $$
This formula appears in the books by Synge \cite{Synge1957} and Souriau \cite{Souriau1969}. Physicists consider it as not adequate for the description of a gas of photons contained in a vessel at thermal equilibrium because the number of photons in the vessel, at any given temperature, 
cannot be imposed: it results from the processes of absorption and emission of photons by the walls of
the vessel, heated at the imposed temperature, which spontaneously occur. In other words, this number is a stochastic function 
whose probability law is imposed by Nature. Souriau proposes, in his book \cite{Souriau1969}, a way
to account for the possible variation of the number of photons. Instead of using the 
\emph{phase space} of the system of $N$ massless relativistic particles contained in a vessel, 
he uses the \emph{manifold of motions}
$M_N$ of that system (which is symplectomorphic to its phase space). He considers that the manifold of 
motions $M$ of a system of photons in the vessel is the disjoint union
 $$M=\bigcup_{N\in\NN} M_N\,,
 $$ 
of all the manifolds of motions $M_N$ of a system of $N$ massless relativistic particles in the vessel, for all possible values of $N\in\NN$. Fo $N=0$ the manifold $M_0$ is reduced to a singleton with, as
Liouville measure, the measure which takes the value $1$ on the only non empty part of that manifold 
(the whole manifold $M_0$). Moreover, since any photon cannot be distinguished from any other photon, two motions of the system with the same number $N$ of massless particles which only differ by the labelling of these particles must be considered as identical. Souriau considers too that since the number $N$ of photons freely adjusts itself, the value of the parameter $\displaystyle b=\frac{1}{kT}$ must, at thermodynamic equilibrium, be the same in all parts $M_N$ of the system, $N\in\NN$.
He uses too the fact that a photon can have two different states of (circular) polarization. With these assumptions the value at any $b$ of the partition function of the system is
 $$P(b)=\sum_{N=0}^{+\infty}\frac{1}{N!}\left(\frac{16\pi V}{c^3b^3}\right)^N
       =\exp\left(\frac{16\pi V}{c^3b^3}\right)\,.
 $$  
The number $N$ of photons in the vessel at thermodynamic equilibrium is a stochastic function which takes the value $n$ with the probability
 $$\text{Probability}\bigl([N=n]\bigr)=\frac{1}{N!}\left(\frac{16\pi V}{c^3b^3}\right)^N
                                       \exp\left(-\frac{16\pi V}{c^3b^3}\right)\,.
 $$ 
The expression of the partition function $P$ allows the calculation of the internal energy, the entropy and all other thermodynamic functions of the system. However, the formula so obtained for the distribution of photons of various energies at a given temperature does not agree with the law, in very good agreement with experiments, obtained by Max Planck
(1858--1947) in 1900. An assembly of photons in thermodynamic equilibrium 
evidently cannot be described as a classical Hamiltonian system. This fact played an important part for the development of Quantum Mechanics.

\subsubsection{Specific heat of solids}\label{SpecificHeatOfSolids}
The motion of a one-dimensional harmonic oscillator can be described by a Hamiltonian system
with, as Hamiltonian,
 $$H(p,q)=\frac{p^2}{2m}+\frac{\mu q^2}{2}\,.
 $$
The idea that the heat energy of a solid comes from the small vibrations, at a microscopic scale,
of its constitutive atoms, lead physicists to attempt to mathematically describe a solid 
as an assembly of a large number $N$ of three-dimensional harmonic oscillators. By dealing separately 
with each proper oscillation mode, the solid can even be described as an assembly of $3N$ 
one-dimensional harmonic oscillators. Exanges of energy between these oscillators is allowed
by the existence of small couplings between them. However, for the determination of the thermodynamic 
equilibria of the solid we will, as in the previous section for ideal gases, consider as negligible 
the energy of interactions between the oscillators. We therefore take for Hamiltonian of the solid
 $$ H=\sum_{i=1}^{3N}\left(\frac{{p_i}^2}{2m_i}+\frac{\mu_i {q_i}^2}{2}\right)\,.
 $$
The value of the paritition function $P$,   for any $b>0$, is
 $$P(b)=\int_{\RR^{6N}}\exp\left[-b\sum_{i=1}^{3N}\left(\frac{{p_i}^2}{2m_i}+\frac{\mu_i {q_i}^2}{2}
         \right)\right]\prod_{i=1}^{3N}(\d p_i\d q_i)=\prod_{i=1}^{3N}\left(\frac{1}{\nu_i}\right)b^{-3N}\,,
 $$
where
 $$\nu_i=\frac{1}{2\pi}\sqrt{\displaystyle\frac{\mu_i}{m_i}}
 $$
is the frequency of the $i$-th harmonic oscillator. 
\par\smallskip

The internal energy of the solid is
 $$E(b)=-\frac{\d\log P(b)}{\d b}=\frac{3N}{b}\,.
 $$
We observe that it only depends on the the temperature and on the number of atoms in the solid, 
not on the frequencies $\nu_i$ of the harmonic oscillators. With
$\displaystyle b=\frac{1}{kT}$ this result is in agreement with the empirical law for the specific heat of solids, in good agreement with experiments at high temperature, discovered in 1819 by the French scientists
Pierre Louis Dulong (1785--1838) and Alexis Thérèse Petit (1791--1820).

\section{Generalization for Hamiltonian actions}\label{GeneralizationForLieGroupsActions}

\subsection{Generalized Gibbs states}\label{GeneralizedGibbsStates}
In his book \cite{Souriau1974} and in several papers \cite{Souriau1966, Souriau1975, Souriau1984},
J.-M.~Souriau extends 
the concept of a Gibbs state for a Hamiltonian action of a 
Lie group $G$ on a symplectic manifold $(M,\omega)$. Usual Gibbs states defined in 
section \ref{StatisticalMechanicsThermodynamics} for a smooth Hamiltonian $H$ on a 
symplectic manifold $(M,\omega)$ appear as special cases, in which the Lie group 
is a one-parameter group.
If the symplectic manifold $(M,\omega)$ is the \emph{phase space} of the Hamiltonian system, that one-parameter
group, whose parameter is the time $t$, is the group of evolution, as a function of time,
of the state of the system, starting from its state at some arbitrarily chosen initial time $t_0$. 
If $(M,\omega)$ is the symplectic manifold of all the \emph{motions} of the system, that one-parameter group,
whose parameter is a real $\tau\in\RR$, is the transformation group which maps one motion of the system with some initial state at time $t_0$ onto the motion of the system with the same initial state at another time
$(t_0+\tau)$. We discuss below this generalization.

\subsubsection{Notations and conventions}\label{NotationsConventionsForGeneralizedGibbsStates} 
In this section, $\Phi:G\times M\to M$ is a Hamiltonian action
(for example on the left) of a Lie group $G$ on a symplectic manifold $(M,\omega)$. 
We denote by $\mathcal G$ the Lie algebra of $G$, by ${\mathcal G}^*$ its dual space 
and by $J:M\to{\mathcal G}^*$ a momentum map of the action $\Phi$. 

\begin{defis}\label{DefiGeneralizedGibbsState}
Let $b\in {\mathcal G}$ be such that the integrals on the right hand sides of the equalities
  \begin{align*}
   P(b)&=\int_M\exp\bigr(-\langle J, b\rangle\bigr)\d\lambda_\omega\quad\text{and}\\
   E_J(b)&={\mathcal E}_{\rho_b}(J)=\frac{1}{P(b)}\int_M J\exp\bigr(-\langle J, b\rangle\bigr)\d\lambda_\omega
 \end{align*}
converge. The smooth probability measure on $M$ with density (with respect to the Liouville measure
$\lambda_\omega$ on $M$)
 $$\rho_b=\frac{1}{P(b)}\exp\bigl(-\langle J,b\rangle\bigr)
 $$
is called the \emph{generalized Gibbs statistical state} associated to $b$. The functions $b\mapsto P(b)$ 
and $b\mapsto E_J(b)$ so defined on the subset of $\mathcal G$ made by elements $b$ for which the integrals defining $P(b)$ and $E(J,b)$ converge are called the \emph{partition function associated to the momentum map} $J$
and the \emph{mean value of $J$ at generalized Gibbs states}. 
\end{defis}

The following Proposition generalizes \ref{MaximalityEntropy}.

\begin{prop}\label{MaximalityEntropyGeneralized}
Let $b\in{\mathcal G}$ be such that the integrals defining $P(b)$ and $E_J(b)$ in Proposition
\ref{DefiGeneralizedGibbsState} converge, and $\rho_b$ be the density of the generalized Gibbs state
associated to $b$. The entropy $s(\rho_b)$, which will be denoted by $S(b)$, exists and is given by
 $$S(b)=\log \bigl(P(b)\bigr)+\bigl\langle E_J(b),b\bigr\rangle
       =\log\bigl(P(b)\bigr)-\Bigl\langle D\bigl(\log P(b)\bigr), b\Bigr\rangle\,.\eqno{(*)}
 $$
Moreover, for any other smooth probability density $\rho_1$ such that
 $${\mathcal E}_{\rho_1}(J)={\mathcal E}_{\rho_b}(J)=E_J(b)\,,
 $$   
we have
 $$s(\rho_1)\leq s(\rho_b)\,,$$
and the equality $s(\rho_1)=s(\rho_b)$ holds if and only if $\rho_1=\rho_b$. 
\end{prop}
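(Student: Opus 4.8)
The plan is to reduce everything to the scalar situation already treated in Proposition~\ref{MaximalityEntropy}. The key observation is that the generalized Gibbs density $\rho_b=\frac{1}{P(b)}\exp\bigl(-\langle J,b\rangle\bigr)$ is nothing but the ordinary Gibbs density (\ref{DefiGibbsStatePartitionFunction}) of the real-valued Hamiltonian $H_b:=\langle J,b\rangle$ taken at the value $1$ of the scalar parameter, and $P(b)=\int_M\exp(-H_b)\,\d\lambda_\omega$ is the corresponding partition function. The standing convergence hypotheses on $P(b)$ and $E_J(b)$ play exactly the role that boundedness below of the Hamiltonian played in \ref{MaximalityEntropy}: they guarantee that the integrals entering the entropy and those entering its competitors are finite, so that boundedness below can be dispensed with.

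First I would establish the formula for $S(b)$ by a direct computation. Since $\log\rho_b=-\log P(b)-\langle J,b\rangle$, substituting into the definition $s(\rho_b)=-\int_M\rho_b\log\rho_b\,\d\lambda_\omega$ and using $\int_M\rho_b\,\d\lambda_\omega=1$ together with $\mathcal E_{\rho_b}(J)=E_J(b)$ yields $S(b)=\log\bigl(P(b)\bigr)+\langle E_J(b),b\rangle$; in particular $s(\rho_b)$ is finite. The second expression follows from the identity $D\bigl(\log P(b)\bigr)=-E_J(b)$: differentiating $P(b)=\int_M\exp(-\langle J,b\rangle)\,\d\lambda_\omega$ under the integral sign gives $DP(b)=-\int_M J\exp(-\langle J,b\rangle)\,\d\lambda_\omega=-P(b)E_J(b)$, whence $D\bigl(\log P(b)\bigr)=-E_J(b)$ and $\langle D(\log P(b)),b\rangle=-\langle E_J(b),b\rangle$.

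For the inequality $s(\rho_1)\le s(\rho_b)$ I would repeat verbatim the convexity argument of \ref{MaximalityEntropy}, with the scalar function $\langle J,b\rangle$ in the role formerly played by $bH$. Using the tangent-line majorization of the convex function $h(x)=x\log(1/x)$, namely $h(x)\le x_0-x(1+\log x_0)$, with $x=\rho_1(z)$ and $x_0=\rho_b(z)$, and integrating over $M$, the cross term produced by $\log\rho_b$ is $\langle\mathcal E_{\rho_1}(J),b\rangle$; the constraint $\mathcal E_{\rho_1}(J)=E_J(b)$ turns this into $\langle E_J(b),b\rangle$, so the right-hand side collapses to $S(b)=s(\rho_b)$. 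The equality case is then handled exactly as in \ref{MaximalityEntropy}: the nonnegative integrable function $\varphi-\varphi_1$ has vanishing integral, hence vanishes almost everywhere, and since $x\mapsto x-\log x$ attains its minimum $1$ only at $x=1$, one concludes $\rho_1=\rho_b$ at every point where $\rho_1>0$, the pointwise identity forcing $\rho_b=0$ wherever $\rho_1=0$.

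The routine part is the entropy computation and the convexity inequality, which transcribe the scalar proof once $\langle J,b\rangle$ is recognized as a genuine Hamiltonian. The only point requiring real care is the second equality for $S(b)$: justifying the differentiation of $\log P$ under the integral sign needs a dominating integrable function for the $b$-derivative of $\exp(-\langle J,b\rangle)$ throughout a neighborhood of $b$, i.e. the analogue of the integrability hypotheses of \ref{AssumptionsInThermodynamicEquilibria} rather than mere convergence of $P(b)$ and $E_J(b)$ at the single point $b$. I would therefore either strengthen the standing assumption so that such domination holds, or state the second formula only at points $b$ interior to the domain of $P$ where $\log P$ is differentiable.
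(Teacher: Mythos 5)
Your proposal is correct and follows essentially the same route as the paper: the formula for $S(b)$ is obtained by substituting $\log\rho_b=-\log P(b)-\langle J,b\rangle$ into the definition of entropy and using $D\bigl(\log P\bigr)(b)=-E_J(b)$, and the maximality statement is obtained by rerunning the convexity argument of Proposition \ref{MaximalityEntropy} with $\langle J,b\rangle$ in the role of $bH$, which is exactly what the paper does. Your closing remark that the second equality in $(*)$ tacitly requires differentiability of $\log P$ at $b$ (i.e.\ the domination hypotheses of \ref{AssumptionsForGeneralizedThermodynamicFunctions}, not just convergence of the two integrals at the single point $b$) is a fair and welcome precision that the paper glosses over.
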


\begin{proof} The equality $(*)$ immediately follows from 
$\displaystyle\log\left(\frac{1}{\rho_b}\right)=\log\bigl(P(b)\bigr)+\langle J,b\rangle$, 
and from $D\bigl(\log P(b)\bigr)=-E_J(b)$. 
The remaining of the proof is the same as that of Proposition
\ref{MaximalityEntropy}.
\end{proof}

\begin{rmks}\hfill
\label{NonInvarianceOfGeneralizedGibbsStates}

\par\smallskip\noindent
{\rm 1.\quad} The second equality $(*)$ above,
 $S(b)=\log\bigl(P(b)\bigr)-\Bigl\langle D\bigl(\log P(b)\bigr), b\Bigr\rangle
 $,
expresses the fact that the functions $\log\bigl(P(b)\bigr)$ and $-S(b)$ are 
\emph{Legendre transforms} of each other: they are linked by the same relation 
as the relation which links a smooth Lagrangian
$L$ and the associated energy $E_L$.

\par\smallskip\noindent
{\rm 2.\quad} The Liouville measure $\lambda_\omega$ remains invariant under the Hamiltonian action $\Phi$, since the symplectic form $\omega$ itself remains invariant under that action. However, we have not a full analogue of Proposition \ref{InvarianceOfGibbsState} because the momentum map $J$ does not remain invariant under the action $\Phi$. We only have the partial anologue stated below.

\par\smallskip\noindent
{\rm 3.\quad}
Legendre transforms were used by F.~Massieu in Thermodynamics in his very early works
\cite{Massieu1869a, Massieu1869b}, more systematically presented in \cite{Massieu1876},
in which he introduced his \emph{characteristic functions} 
(today called \emph{thermodynamic potentials}) allowing the determination of all the thermodynamic functions of a physical system by partial derivations of a suitably chosen characteristic function. For a modern presentation of that subject the reader is referred to \cite{Balian2015} and \cite{Callen}, chapter 5, pp.~131--152.  

\end{rmks}

\begin{prop}\label{PartialInvarianceOfGeneralizedGibbsState}   
Let $b\in{\mathcal G}$ be such that the integrals defining $P(b)$ and $M(b)$ in Proposition
\ref{DefiGeneralizedGibbsState} converge. The generalized Gibbs state
associated to $b$ remains invariant under the restriction of the Hamiltonian action $\Phi$ to the 
one-parameter subgroup of $G$ generated by $b$, $\bigl\{\exp(\tau b)\bigm|\tau\in\RR\bigr\}$.
\end{prop}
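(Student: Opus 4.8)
The plan is to mimic the proof of Proposition \ref{InvarianceOfGibbsState}, which treats the special case of the one-parameter group of time translations; the only new ingredient is to identify the infinitesimal generator of the relevant one-parameter subgroup with a Hamiltonian vector field whose Hamiltonian is a single component of the momentum map $J$.

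First I would recall that the density of the generalized Gibbs state is $\rho_b=\frac{1}{P(b)}\exp\bigl(-\langle J,b\rangle\bigr)$, so that the measure to be studied is $\rho_b\,\lambda_\omega$. Since $\Phi$ is a Hamiltonian, hence symplectic, action, each $\Phi_g$ preserves $\omega$ and therefore preserves the Liouville measure $\lambda_\omega$ (as already noted in Remark \ref{NonInvarianceOfGeneralizedGibbsStates}). Consequently, to prove invariance of $\rho_b\,\lambda_\omega$ under $\Phi_{\exp(\tau b)}$, it suffices to prove that the density $\rho_b$ is constant along the orbits of the one-parameter subgroup $\bigl\{\exp(\tau b)\bigm|\tau\in\RR\bigr\}$, equivalently that the function $\langle J,b\rangle$ is constant along these orbits.

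Next I would identify the relevant vector field. By Proposition \ref{LieAlgebraActionDeterminedByLieGroupAction}, the reduced flow of the fundamental vector field $\psi(b)$ associated to $b\in{\mathcal G}$ is exactly $\tau\mapsto\Phi_{\exp(\tau b)}$, so the orbits of that one-parameter subgroup are precisely the integral curves of $\psi(b)$. By Proposition \ref{ExistenceMomentum}, the function $J_b:x\mapsto\langle J(x),b\rangle$ is a Hamiltonian for $\psi(b)$, that is $\psi(b)=X_{J_b}$. It then remains to observe that a Hamiltonian is constant along the integral curves of its own Hamiltonian vector field: indeed $\mathi(X_{J_b})\,\d J_b=\{J_b,J_b\}=0$ by skew-symmetry of the Poisson bracket. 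Hence $\langle J,b\rangle$ is constant along the orbits of $\bigl\{\exp(\tau b)\bigr\}$, the density $\rho_b$ is invariant, and therefore the generalized Gibbs measure $\rho_b\,\lambda_\omega$ is invariant under the restricted action.

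There is no serious obstacle here; the only conceptual point to keep in mind---and the reason the statement is merely a \emph{partial} analogue of Proposition \ref{InvarianceOfGibbsState}, as stressed in Remark \ref{NonInvarianceOfGeneralizedGibbsStates}---is that, although the full momentum map $J$ generally fails to be invariant under $\Phi$, the single real-valued component $\langle J,b\rangle$ singled out by $b$ is invariant under the flow it itself generates. This is exactly what makes the argument succeed for the one-parameter subgroup generated by $b$, and in general for no larger subgroup of $G$.
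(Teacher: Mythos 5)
Your argument is correct and is essentially the paper's own proof: you identify the orbits of $\bigl\{\exp(\tau b)\bigr\}$ with the integral curves of the Hamiltonian vector field whose Hamiltonian is $\langle J,b\rangle$, observe that this function is constant along its own flow, and then invoke the invariance of the Liouville measure exactly as in Proposition \ref{InvarianceOfGibbsState}. Your closing remark on why the invariance holds only for the subgroup generated by $b$ is a correct and welcome clarification of Remark \ref{NonInvarianceOfGeneralizedGibbsStates}.
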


\begin{proof}
The orbits of the action on $M$ of the subgroup $\bigl\{\exp(\tau b)\bigm|\tau\in\RR\bigr\}$ of $G$
are the integral curves of the Hamiltonian vector field whose Hamiltonian is $\langle J,b\rangle$, 
which of course is constant on each of these curves. Therefore the proof of \ref{InvarianceOfGibbsState}  
is valid for that subgroup. 
\end{proof}

\subsection{Generalized thermodynamic functions}\label{GeneralizedThermodynamicFunctions}

\subsubsection{Assumptions made in this section.\quad}
\label{AssumptionsForGeneralizedThermodynamicFunctions}
Notations and conventions being the same as in \ref{NotationsConventionsForGeneralizedGibbsStates}, 
let $\Omega$ be the largest open subset of the Lie algebra $\mathcal G$ of $G$ containing all
$b\in {\mathcal G}$ satisfying the following properties:

\begin{itemize}
\item{}
the functions defined on $M$, with values, respectively,
in $\RR$ and in the dual ${\mathcal G}^*$ of $\mathcal G$,
 $$z\mapsto \exp\Bigl(-\big\langle J(z),b\bigr\rangle\Bigl)\quad\hbox{and}\quad
   z\mapsto J(z)\exp\Bigl(-\big\langle J(z),b\bigr\rangle\Bigl)
 $$
are integrable on $M$ with respect to the Liouville measure $\lambda_\omega$; 

\item{} moreover their integrals are differentiable with repect to $b$, their differentials are continuous and can be calculated by differentiation under the sign $\int_M$.
\end{itemize}

It is assumed in this section that the considered Hamiltonian action $\Phi$ of the Lie group $G$ on the 
symplectic manifold $(M,\omega)$ and its momentum map $J$ are such that
the open subset $\Omega$ of $\mathcal G$ is not empty. This condition is not always satisfied
when $(M,\omega)$ is a cotangent bundle, but of course it is satisfied
when it is a compact manifold.

\begin{prop}\label{PropertiesOfGeneralizedThermodynamicFunctions}
Let $\Phi:G\times M\to M$ be a Hamiltonian action of a Lie group $G$ on a symplectic manifold 
$(M,\omega)$ satisfying the assumptions indicated in \ref{AssumptionsForGeneralizedThermodynamicFunctions}. The partition function $P$ associated to the momentum map
$J$ and the mean value $E_J$ of $J$ for generalized Gibbs states (\ref{DefiGeneralizedGibbsState}) 
are defined and continuously differentiable on the open subset $\Omega$ of $\mathcal G$. For each $b\in\Omega$, the differentials at $b$
of the functions $P$ and $\log P$ (which are linear maps defined on $\mathcal G$, with values in $\RR$, in other words elements of ${\mathcal G}^*$) are given by  
 $$ DP(b)=-P(b)E_J(b)\,,\quad D(\log P)(b)=-E_J(b)\,.
 $$
For each $b\in\Omega$, the differential at $b$ of the map $E_J$ (which is a linear map defined on $\mathcal G$,
with values in its dual ${\mathcal G}^*$) is given by
 $$\bigl\langle DE_J(b)(Y),Z\bigr\rangle=\bigl\langle E_J(b),Y\bigr\rangle\bigl\langle E_J(b),Z\bigr\rangle
   -{\mathcal E}_{\rho_b}\bigl(\langle J,Y\rangle\langle J,Z\rangle\bigr)\,,\quad
      \hbox{with}\ Y\ \hbox{and}\ Z\in{\mathcal G}\,,
 $$
where we have written, as in \ref{DefisMeanValueVariationsStationarity},
 $${\mathcal E}_{\rho_b}\bigl(\langle J,Y\rangle\langle J,Z\rangle\bigr)
   =\frac{1}{P(b)}\int_M\langle J,Y\rangle\langle J,Z\rangle\exp\bigl(-\langle J,b\rangle\bigr)\d\lambda_\omega\,.
 $$
At each $b\in\Omega$, the differential of the entropy function $S$ (\ref{MaximalityEntropyGeneralized}), 
which is a linear map defined on $\mathcal G$, with values in $\RR$, 
in other words an element of ${\mathcal G}^*$, is given by
 $$\bigl\langle DS(b),Y\bigr\rangle=\bigl\langle DE_J(b)(Y),b\bigr\rangle\,,\quad Y\in{\mathcal G}\,.
 $$
\end{prop}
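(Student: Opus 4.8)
The plan is to obtain all four formulas by differentiating the defining integrals under the integral sign, exactly as in the proof of Proposition \ref{PropertiesOfThermodynamicFunctions}, the only change being that the scalar parameter $b\in\RR$ there is replaced by a Lie algebra element $b\in\Omega\subset\mathcal G$ and ordinary derivatives by differentials. The hypotheses collected in \ref{AssumptionsForGeneralizedThermodynamicFunctions} are exactly what is needed to legitimate these operations: they guarantee that on $\Omega$ the functions $P$ and $E_J$ are defined, are continuously differentiable, and that $DP$ and $DE_J$ may be computed by differentiating under $\int_M$. Thus the existence and continuous differentiability asserted in the first sentence of the statement follow directly from the assumptions, and the remaining work is purely computational.

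First I would compute $DP$. For $Y\in\mathcal G$, differentiating $P(b)=\int_M\exp(-\langle J,b\rangle)\d\lambda_\omega$ in the direction $Y$ brings down a factor $-\langle J,Y\rangle$, so that $\langle DP(b),Y\rangle=-\int_M\langle J,Y\rangle\exp(-\langle J,b\rangle)\d\lambda_\omega=-P(b)\langle E_J(b),Y\rangle$, that is $DP(b)=-P(b)E_J(b)$; dividing by $P(b)$ gives $D(\log P)(b)=-E_J(b)$.

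Next I would compute $DE_J$. Writing $\langle E_J(b),Z\rangle=\mathcal E_{\rho_b}(\langle J,Z\rangle)=\frac1{P(b)}\int_M\langle J,Z\rangle\exp(-\langle J,b\rangle)\d\lambda_\omega$ and differentiating in the direction $Y$ by the product rule produces two contributions: the derivative of the prefactor $1/P(b)$, which by the formula for $DP$ yields $\langle E_J(b),Y\rangle\langle E_J(b),Z\rangle$, and the derivative under the integral sign, which yields $-\mathcal E_{\rho_b}(\langle J,Y\rangle\langle J,Z\rangle)$; their sum is the stated expression for $\langle DE_J(b)(Y),Z\rangle$. Finally, for the entropy I would differentiate $S(b)=\log P(b)+\langle E_J(b),b\rangle$ in the direction $Y$, using $D(\log P)(b)=-E_J(b)$ and the product rule for the bilinear pairing; the two terms $\pm\langle E_J(b),Y\rangle$ cancel and leave $\langle DS(b),Y\rangle=\langle DE_J(b)(Y),b\rangle$.

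The computations are routine, so the only genuine point requiring care — and hence the natural main obstacle — is the analytic justification of differentiation under the integral sign and of the continuity of the resulting differentials. This is precisely what the hypotheses of \ref{AssumptionsForGeneralizedThermodynamicFunctions} were designed to provide, so in the present setting the obstacle is assumed away and the proof reduces to the algebra above. As a consistency check one notes that, since $E_J=-D(\log P)$, the differential $DE_J(b)$ is minus the second differential of $\log P$ at $b$, which explains why the bilinear expression for $\langle DE_J(b)(Y),Z\rangle$ is symmetric in $Y$ and $Z$.
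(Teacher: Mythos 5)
Your proposal is correct and follows exactly the route of the paper's own proof: differentiation under the integral sign, justified by the assumptions of \ref{AssumptionsForGeneralizedThermodynamicFunctions}, followed by the product-rule computations for $E_J$ and $S$ (the paper merely calls these ``easy but tedious calculations'' without writing them out, whereas you supply them explicitly and correctly).
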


\begin{proof}
By assumptions \ref{AssumptionsForGeneralizedThermodynamicFunctions}, the differentials of $P$ and $E_J$ can be calculated by differentiation under the sign $\int_M$. Easy (but tedious) calculations lead to the indicated results. 
\end{proof}

\begin{coro}\label{CorollaryPropertiesOfGeneralizedThermodynamicFunctions} 
With the same assumptions and notations as those in Proposition 
\ref{PropertiesOfGeneralizedThermodynamicFunctions}, for any $b\in\Omega$ and $Y\in{\mathcal G}$,
 $$\bigl\langle DE_J(b)(Y),Y\bigr\rangle=-\frac{1}{P(b)}
         \int_M\bigl\langle J-E_J(b),Y\bigr\rangle^2\d\lambda_\omega\leq 0\,.
 $$
\end{coro}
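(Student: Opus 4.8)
The plan is to obtain this as an immediate specialization of the formula for $DE_J(b)$ established in Proposition~\ref{PropertiesOfGeneralizedThermodynamicFunctions}, combined with the elementary variance identity already recorded in the proof of Proposition~\ref{PropertiesOfThermodynamicFunctions}. No new analytic input is needed: the convergence and differentiability of all the integrals involved are already guaranteed by the assumptions of \ref{AssumptionsForGeneralizedThermodynamicFunctions} under which $E_J$ is differentiable on $\Omega$.

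First I would set $Z=Y$ in the expression
$$\bigl\langle DE_J(b)(Y),Z\bigr\rangle=\bigl\langle E_J(b),Y\bigr\rangle\bigl\langle E_J(b),Z\bigr\rangle-{\mathcal E}_{\rho_b}\bigl(\langle J,Y\rangle\langle J,Z\rangle\bigr)\,,$$
which yields
$$\bigl\langle DE_J(b)(Y),Y\bigr\rangle=\bigl\langle E_J(b),Y\bigr\rangle^2-{\mathcal E}_{\rho_b}\bigl(\langle J,Y\rangle^2\bigr)\,.$$
Next I would introduce the real-valued function $f=\langle J,Y\rangle$ on $M$, i.e. $z\mapsto\langle J(z),Y\rangle$. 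By the definition of $E_J(b)$ and linearity of the pairing one has ${\mathcal E}_{\rho_b}(f)=\bigl\langle E_J(b),Y\bigr\rangle$, so the right-hand side above is precisely $\bigl({\mathcal E}_{\rho_b}(f)\bigr)^2-{\mathcal E}_{\rho_b}(f^2)$, the negative of the variance of $f$ with respect to the generalized Gibbs probability density $\rho_b$.

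Finally I would apply the identity ${\mathcal E}_{\rho_b}(f^2)-\bigl({\mathcal E}_{\rho_b}(f)\bigr)^2={\mathcal E}_{\rho_b}\bigl((f-{\mathcal E}_{\rho_b}(f))^2\bigr)$ used in \ref{PropertiesOfThermodynamicFunctions}, valid for any probability density, and note that $f-{\mathcal E}_{\rho_b}(f)=\langle J-E_J(b),Y\rangle$. This gives
$$\bigl\langle DE_J(b)(Y),Y\bigr\rangle=-{\mathcal E}_{\rho_b}\bigl(\langle J-E_J(b),Y\rangle^2\bigr)=-\frac{1}{P(b)}\int_M\langle J-E_J(b),Y\rangle^2\exp\bigl(-\langle J,b\rangle\bigr)\d\lambda_\omega\,.$$
Since the integrand is a square multiplied by the strictly positive weight $\exp(-\langle J,b\rangle)$, and $P(b)>0$, the quantity is manifestly $\leq 0$. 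There is essentially no obstacle: the only point demanding attention is the bookkeeping of the Gibbs weight $\exp(-\langle J,b\rangle)/P(b)$, which is written implicitly in the integral of the statement and must be restored to see that one is genuinely integrating ${\mathcal E}_{\rho_b}$ of a nonnegative function.
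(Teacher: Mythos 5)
Your proposal is correct and follows exactly the paper's own route: the paper's proof consists of setting $Z=Y$ in the formula for $DE_J(b)$ from Proposition \ref{PropertiesOfGeneralizedThermodynamicFunctions} and invoking the variance identity ${\mathcal E}_{\rho_b}(f^2)-\bigl({\mathcal E}_{\rho_b}(f)\bigr)^2={\mathcal E}_{\rho_b}\bigl((f-{\mathcal E}_{\rho_b}(f))^2\bigr)$ already used in the proof of \ref{PropertiesOfThermodynamicFunctions}. Your remark that the Gibbs weight $\exp\bigl(-\langle J,b\rangle\bigr)$ is implicitly missing from the displayed integral is also well taken; the same omission occurs in the analogous formula of \ref{PropertiesOfThermodynamicFunctions}, and restoring it is indeed necessary to read the right-hand side as $-{\mathcal E}_{\rho_b}$ of a nonnegative function.
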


\begin{proof} This result follows from the well known result in Probability theory already
used in the proof of \ref{PropertiesOfThermodynamicFunctions}.
\end{proof}

The momentum map $J$ of the Hamiltonian action $\Phi$ is not uniquely determined: for any
constant $\mu\in{\mathcal G}^*$, $J_1=J+\mu$ too is a momentum map for $\Phi$. The following
proposition indicates how the generalized thermodynamic functions $P$, $E_J$ and $S$
change when $J$ is replaced by $J_1$.

\begin{prop}\label{EffectChangeOfJ}
With the same assumptions and notations as those in Proposition 
\ref{PropertiesOfGeneralizedThermodynamicFunctions}, let $\mu\in{\mathcal G}^*$ be a constant. When the momentum map $J$ is replaced by $J_1=J+\mu$, the open subset $\Omega$ of $\mathcal G$ remains unchanged,
while the generalized thermodynamic functions $P$, $E_J$ and $S$, are replaced, respectively, 
by $P_1$, $E_{J_1}$ and $S_1$, given by
 $$P_1(b)=\exp\bigl(-\langle \mu,b\rangle\bigr)P(b),\quad
   E_{J_1}(b)=E_J(b)+\mu\,,\quad S_1(b)=S(b)\,.
 $$
The Gibbs satistical state and its density $\rho_b$ with respect to the Liouville measure $\lambda_\omega$
remain unchanged. 
\end{prop}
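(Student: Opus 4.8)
The plan is to reduce the whole statement to the single observation that the Gibbs density $\rho_b$ is insensitive to the replacement of $J$ by $J_1=J+\mu$, after which each of the four assertions follows by a one-line computation. The starting point is that $\mu\in{\mathcal G}^*$ is a \emph{constant} element, so that for a fixed $b\in{\mathcal G}$ the number $\langle\mu,b\rangle$ does not depend on the point $z\in M$. Consequently, pointwise on $M$,
$$\exp\bigl(-\langle J_1,b\rangle\bigr)=\exp\bigl(-\langle\mu,b\rangle\bigr)\exp\bigl(-\langle J,b\rangle\bigr)\,,$$
the two exponentials differing only by the strictly positive constant factor $\exp(-\langle\mu,b\rangle)$.

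From this factorization I would first settle the invariance of $\Omega$. The integrand $z\mapsto\exp(-\langle J_1(z),b\rangle)$ is integrable with respect to $\lambda_\omega$ if and only if $z\mapsto\exp(-\langle J(z),b\rangle)$ is, since they differ by a constant factor; the same holds for the ${\mathcal G}^*$-valued integrand $z\mapsto J_1(z)\exp(-\langle J_1(z),b\rangle)$ once one expands $J_1=J+\mu$, and the differentiability-under-the-integral conditions of \ref{AssumptionsForGeneralizedThermodynamicFunctions} are likewise preserved. Hence the defining conditions for $\Omega$ are unchanged, so $\Omega$ remains the same open subset of $\mathcal G$. Integrating the displayed factorization then yields $P_1(b)=\exp(-\langle\mu,b\rangle)P(b)$ at once.

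The crux is then the cancellation of this constant factor in the density: dividing the factorized exponential by $P_1(b)=\exp(-\langle\mu,b\rangle)P(b)$, the factors $\exp(-\langle\mu,b\rangle)$ cancel and I obtain $\rho_{1,b}=\rho_b$. Since $\rho_{1,b}=\rho_b$ and $\mu$ is constant with $\int_M\rho_b\,\d\lambda_\omega=1$, the mean value computation gives $E_{J_1}(b)={\mathcal E}_{\rho_b}(J+\mu)={\mathcal E}_{\rho_b}(J)+\mu=E_J(b)+\mu$. The entropy functional $s$ of \ref{DefiEntropyOfStatisticalState} depends only on the density, so $S_1(b)=s(\rho_{1,b})=s(\rho_b)=S(b)$; alternatively one may substitute into the formula $S_1(b)=\log P_1(b)+\langle E_{J_1}(b),b\rangle$ of \ref{MaximalityEntropyGeneralized} and watch the terms $-\langle\mu,b\rangle$ and $+\langle\mu,b\rangle$ cancel.

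There is essentially no hard step here: everything hinges on recognizing $\exp(-\langle\mu,b\rangle)$ as a point-independent constant that factors out of every integral and then cancels between the numerator and the normalization $P_1(b)$. The only point requiring a little care is the verification that the technical hypotheses defining $\Omega$ (integrability together with differentiability under the integral sign) transfer from $J$ to $J_1$, and this is immediate because the two momentum maps differ only by the constant scalar factor in the scalar integrand and by the bounded constant shift $\mu$ inside the ${\mathcal G}^*$-valued integrand.
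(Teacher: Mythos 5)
Your proposal is correct and follows the same route as the paper, which simply records the factorization $\exp(-\langle J+\mu,b\rangle)=\exp(-\langle\mu,b\rangle)\exp(-\langle J,b\rangle)$ and leaves the rest as easy calculations. You have just carried out those calculations explicitly (invariance of $\Omega$, cancellation of the constant factor in the density, and the resulting formulas for $P_1$, $E_{J_1}$ and $S_1$), so the argument matches the paper's in substance.
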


\begin{proof} We have 
 $$\exp\bigl(-\langle J+\mu, b\rangle=\exp\bigl(-\langle\mu,b\rangle\bigl)
                                       \exp\bigl(-\langle J,b\rangle\bigl)\,.
 $$
The indicated results follow by easy calculations.
\end{proof}

The following proposition indicates how the generalized thermodynamic functions
$P$, $E_J$ and $S$ vary along orbits of the adjoint action of the Lie group $G$
on its Lie algebra $\mathcal G$.

\begin{prop}\label{EffectAdjointActionOfG}
The assumptions and notations are the same as those in Proposition 
\ref{PropertiesOfGeneralizedThermodynamicFunctions}. The open subset $\Omega$ of $\mathcal G$
is an union of orbits of the adjoint action of $G$ on $\mathcal G$. In other words,
for each $b\in\Omega$ and each $g\in G$, $\Ad_g b\in\Omega$. Moreover, let $\theta:G\to{\mathcal G}^*$
be the symplectic cocycle of $G$ for the coadjoin action of $G$ on ${\mathcal G}^*$ such that, 
for any $g\in G$,
 $$ J\circ\Phi_g=\Ad^*_{g^{-1}}\circ\, J+\theta(g)\,.
 $$ 
Then for each $b\in\Omega$ and each $g\in G$
 \begin{align*} 
       P(\Ad_gb)  &=\exp\Bigl(\bigl\langle\theta(g^{-1}),b\bigr\rangle\Bigr)P(b)
                   =\exp\Bigl(-\bigl\langle\Ad^*_g\theta(g),b\bigr\rangle\Bigr)P(b)\,,\\
       E_J(\Ad_gb)&=\Ad^*_{g^{-1}}E_J(b)+\theta(g)\,,\\
       S(\Ad_gb)  &=S(b)\,. 
 \end{align*}
\end{prop}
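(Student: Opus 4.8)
The plan is to reduce everything to a single change of variables in the defining integrals, using the diffeomorphism $\Phi_g$ together with the equivariance relation $J\circ\Phi_g=\Ad^*_{g^{-1}}\circ J+\theta(g)$ furnished by Theorem \ref{SouriauEquivarianceTheorem}. The crucial structural fact is that $\Phi_g$ is a symplectomorphism, hence preserves the Liouville measure $\lambda_\omega$ (Remark \ref{NonInvarianceOfGeneralizedGibbsStates}), so substituting $z=\Phi_g(w)$ leaves $\d\lambda_\omega$ unchanged. First I would compute the pairing appearing in every exponential: using $J(\Phi_g(w))=\Ad^*_{g^{-1}}J(w)+\theta(g)$ and the convention $\langle\Ad^*_h\xi,X\rangle=\langle\xi,\Ad_h X\rangle$ for the coadjoint action, one finds
 $$\bigl\langle J(\Phi_g(w)),\Ad_g b\bigr\rangle=\bigl\langle J(w),b\bigr\rangle+\bigl\langle\Ad^*_g\theta(g),b\bigr\rangle\,,$$
the first term surviving because $\Ad_{g^{-1}}\Ad_g=\id$ and the second being a constant independent of $w$. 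This single identity drives the entire proof.

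From it the $\Omega$-invariance is immediate: after the measure-preserving substitution $z=\Phi_g(w)$, the integrand $z\mapsto\exp\bigl(-\langle J(z),\Ad_g b\rangle\bigr)$ equals a fixed positive constant times $w\mapsto\exp\bigl(-\langle J(w),b\rangle\bigr)$, and likewise for the $\mathcal G^*$-valued integrand and for the differentiability-under-the-integral conditions; hence the integrals converge and are differentiable at $\Ad_g b$ if and only if they do at $b$, so $\Ad_g b\in\Omega$ whenever $b\in\Omega$. The same substitution gives $P(\Ad_g b)=\exp\bigl(-\langle\Ad^*_g\theta(g),b\rangle\bigr)P(b)$ directly, the constant exponential factoring out of the integral.

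Next I would treat $E_J$ by the identical substitution in the numerator $\int_M J\exp\bigl(-\langle J,\Ad_g b\rangle\bigr)\,\d\lambda_\omega$: the constant exponential cancels against the matching factor in $P(\Ad_g b)$, the term $\Ad^*_{g^{-1}}J(w)$ integrates to $\Ad^*_{g^{-1}}E_J(b)$ by pulling the linear map $\Ad^*_{g^{-1}}$ outside the integral, and the constant term $\theta(g)$ integrates to $\theta(g)$ since $\frac1{P(b)}\int_M\exp\bigl(-\langle J,b\rangle\bigr)\,\d\lambda_\omega=1$; this yields $E_J(\Ad_g b)=\Ad^*_{g^{-1}}E_J(b)+\theta(g)$. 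Then $S(\Ad_g b)=S(b)$ follows purely algebraically: substituting the two formulas just obtained into $S(b)=\log P(b)+\langle E_J(b),b\rangle$ (Proposition \ref{MaximalityEntropyGeneralized}) and using $\langle\Ad^*_{g^{-1}}E_J(b),\Ad_g b\rangle=\langle E_J(b),b\rangle$ together with $\langle\theta(g),\Ad_g b\rangle=\langle\Ad^*_g\theta(g),b\rangle$, the two cross terms $\pm\langle\Ad^*_g\theta(g),b\rangle$ cancel, leaving $\log P(b)+\langle E_J(b),b\rangle=S(b)$.

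The genuine difficulties here are purely bookkeeping, and that is where I expect the main obstacle. One must fix the variance and sign conventions consistently — the direction of the substitution ($z=\Phi_g(w)$ rather than $z=\Phi_{g^{-1}}(w)$) and the coadjoint convention under which $g\mapsto\Ad^*_{g^{-1}}$ is the left action — and verify they match the equivariance relation exactly as stated, since an inconsistent choice produces $\theta(g)$ where $\theta(g^{-1})$ is wanted. Finally, to reconcile the two displayed forms of $P(\Ad_g b)$ I would invoke the group-cocycle identity $\theta(gh)=\theta(g)+\Ad^*_{g^{-1}}\theta(h)$ of Proposition \ref{CocycleProperty}: taking $h=g$ with $g$ replaced by $g^{-1}$ gives $\Ad^*_g\theta(g)=-\theta(g^{-1})$, which identifies $\exp\bigl(-\langle\Ad^*_g\theta(g),b\rangle\bigr)$ with $\exp\bigl(\langle\theta(g^{-1}),b\rangle\bigr)$.
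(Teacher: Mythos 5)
Your proposal is correct and follows essentially the same route as the paper: a measure-preserving change of variables by the symplectomorphism $\Phi_g$ combined with the equivariance relation $J\circ\Phi_g=\Ad^*_{g^{-1}}\circ J+\theta(g)$ and the cocycle identity $\theta(g^{-1})=-\Ad^*_g\theta(g)$. You merely write the substitution from the other end (the paper rewrites $\Ad^*_gJ$ as $J\circ\Phi_{g^{-1}}-\theta(g^{-1})$ before integrating) and you spell out the $E_J$ and $S$ computations that the paper leaves as ``easily follow''; both are the same argument.
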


\begin{proof}
We have
 \begin{align*}
  P(\Ad_gb)&=\int_M\exp\bigl(-\langle J,\Ad_gb\rangle\bigr)\d\lambda_\omega
            =\int_M\exp\bigl(-\langle \Ad^*_g J,b\rangle\bigr)\d\lambda_\omega\\
           &=\int_M\exp\Bigl(-\bigl\langle J\circ\Phi_{g^{-1}}-\theta(g^{-1},b\bigr\rangle\Bigr)
              \d\lambda_\omega\\
           &=\exp\Bigl(\bigl\langle\theta(g^{-1}),b\bigr\rangle\Bigr)P(b)
            =\exp\Bigl(-\bigl\langle\Ad^*_g\theta(g),b\bigr\rangle\Bigr)P(b)\,,\\
 \end{align*}
since $\theta(g^{-1})=-\Ad^*_g\theta(g)$. By using 
\ref{PropertiesOfGeneralizedThermodynamicFunctions} and \ref{MaximalityEntropyGeneralized},
the other results easily follow.
\end{proof} 

\begin{rmk}\label{EquivarianceOfEJ}
The equality
 $$ E_J(\Ad_gb)=\Ad^*_{g^{-1}}E_J(b)+\theta(g)$$
means that the map $E_J:\Omega\to{\mathcal G}^*$ is equivariant with respect to the adjoint action
of $G$ on the open subset $\Omega$ of its Lie algebra $\mathcal G$ and its affine action on the left
on ${\mathcal G}^*$
 $$(g,\xi)\mapsto \Ad^*_{g^{-1}}\xi+\theta(g)\,,\quad g\in G\,,\quad \xi\in{\mathcal G}^*\,.
 $$
\end{rmk} 

\begin{prop}\label{EffectInfinitesimalAdjointAction}
The assumptions and notations are the same as those in Proposition 
\ref{PropertiesOfGeneralizedThermodynamicFunctions}. 
For each $b\in\Omega$ and each $X\in{\mathcal G}$, we have
   \begin{align*}
    \bigl\langle E_J(b),[X,b]\bigr\rangle&=\bigl\langle\Theta(X),b\bigr\rangle\,,\\
    DE_J(b)\bigl([X,b]\bigr)&=-\ad^*_XE_J(b) + \Theta(X)\,,
   \end{align*}
where $\Theta=T_e\theta:{\mathcal G}\to{\mathcal G}^*$ is the $1$-cocycle of the Lie algebra ${\mathcal G}$
associated to the $1$-cocycle $\theta$ of the Lie group $G$.
\end{prop}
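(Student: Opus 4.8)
The plan is to obtain both identities by differentiating the finite transformation laws of Proposition \ref{EffectAdjointActionOfG} along the one-parameter subgroup $s\mapsto\exp(sX)$ at $s=0$. The infinitesimal facts I will use are the standard formulas
$$\frac{\d}{\d s}\Ad_{\exp(sX)}b\Bigm|_{s=0}=[X,b]\,,\qquad
  \frac{\d}{\d s}\theta\bigl(\exp(sX)\bigr)\Bigm|_{s=0}=T_e\theta(X)=\Theta(X)\,,$$
together with the differential formula $D(\log P)(b)=-E_J(b)$ from Proposition \ref{PropertiesOfGeneralizedThermodynamicFunctions} and the defining relation $\langle\ad^*_X\xi,Y\rangle=\langle\xi,[X,Y]\rangle$ for the infinitesimal coadjoint action.

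For the first identity, I would start from the partition-function law of Proposition \ref{EffectAdjointActionOfG}, written in logarithmic form as $\log P(\Ad_g b)=\langle\theta(g^{-1}),b\rangle+\log P(b)$. Setting $g=\exp(sX)$ and differentiating at $s=0$, the left-hand side becomes $\langle D(\log P)(b),[X,b]\rangle=-\langle E_J(b),[X,b]\rangle$ by the chain rule, while the right-hand side becomes $\langle-\Theta(X),b\rangle$ since $\frac{\d}{\d s}\theta(\exp(-sX))\bigm|_{s=0}=-\Theta(X)$. Cancelling the common sign yields $\langle E_J(b),[X,b]\rangle=\langle\Theta(X),b\rangle$.

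For the second identity, I would differentiate the equivariance relation $E_J(\Ad_g b)=\Ad^*_{g^{-1}}E_J(b)+\theta(g)$ of Proposition \ref{EffectAdjointActionOfG}, again along $g=\exp(sX)$ at $s=0$. By the chain rule the left-hand side gives $DE_J(b)([X,b])$. On the right-hand side, the cocycle term contributes $\Theta(X)$, and it remains to compute $\frac{\d}{\d s}\Ad^*_{\exp(-sX)}E_J(b)\bigm|_{s=0}$. With the convention $\langle\Ad^*_g\xi,Y\rangle=\langle\xi,\Ad_g Y\rangle$ (the one making $g\mapsto\Ad^*_{g^{-1}}$ a left representation, consistent with Theorem \ref{SouriauEquivarianceTheorem}), pairing against an arbitrary $Y\in{\mathcal G}$ gives $\frac{\d}{\d s}\langle E_J(b),\Ad_{\exp(-sX)}Y\rangle\bigm|_{s=0}=\langle E_J(b),-[X,Y]\rangle=-\langle\ad^*_X E_J(b),Y\rangle$, so the linear part differentiates to $-\ad^*_X E_J(b)$. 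Collecting terms yields $DE_J(b)([X,b])=-\ad^*_X E_J(b)+\Theta(X)$.

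The computation presents no real obstacle; the only point requiring care is the sign bookkeeping in differentiating the coadjoint factor $\Ad^*_{g^{-1}}$, where one must fix the pairing convention and track the inverse consistently. As a consistency check, one recovers the first identity from the second by pairing with $b$ and invoking the entropy invariance $S(\Ad_g b)=S(b)$ from Proposition \ref{EffectAdjointActionOfG}: together with the formula $\langle DS(b),Y\rangle=\langle DE_J(b)(Y),b\rangle$ of Proposition \ref{PropertiesOfGeneralizedThermodynamicFunctions} this forces $\langle DE_J(b)([X,b]),b\rangle=0$, and substituting the second identity reproduces $\langle\ad^*_X E_J(b),b\rangle=\langle E_J(b),[X,b]\rangle=\langle\Theta(X),b\rangle$.
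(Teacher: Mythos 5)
Your proposal is correct and follows essentially the same route as the paper: both identities are obtained by differentiating the two finite transformation laws of Proposition \ref{EffectAdjointActionOfG} along $g=\exp(sX)$ at $s=0$, the only cosmetic difference being that you work with $\log P$ and $D(\log P)(b)=-E_J(b)$ where the paper differentiates $P(\Ad_g b)$ directly and then divides by $-P(b)$. Your sign bookkeeping for $\Ad^*_{g^{-1}}$ and the concluding consistency check are both sound.
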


\begin{proof}
Let us set $g=\exp(\tau X)$ in the first equality in \ref{EffectAdjointActionOfG}, 
derive that equality with respect to $\tau$, and evaluate the result at $\tau=0$.
We obtain
 $$DP(b)\bigl([X,b]\bigr)=-P(b)\bigl\langle\Theta(X),b\bigr\rangle\,.
 $$
Since, by the first equality of \ref{PropertiesOfGeneralizedThermodynamicFunctions},
$DP(b)=-P(b)E_J(b)$, the first stated equality follows.
\par\smallskip

Let us now set $g=\exp(\tau X)$ in the second equality in \ref{EffectAdjointActionOfG}, 
derive that equality with respect to $\tau$, and evaluate the result at $\tau=0$.
We obtain the second equality stated.
\end{proof}

\begin{coro}\label{cocycleThetab}
With the assumptions and notations of \ref{EffectInfinitesimalAdjointAction}, let us define,
for each $b\in\Omega$, a linear map $\Theta_b:{\mathcal G}\to{\mathcal G}^*$ by setting
 $$\Theta_b(X)=\Theta(X) -\ad^*_XE_J(b)\,.
 $$
The map $\Theta_b$ is a symplectic $1$-cocycle of the Lie algebra $\mathcal G$ for the coadjoint
representation, which satisfies
 $$\Theta_b(b)=0\,.
 $$
Moreover if we replace the momentum map $J$ by $J_1=J+\mu$, with $\mu\in{\mathcal G}^*$ constant,
the $1$-cocycle $\Theta_b$ remains unchanged.
\end{coro}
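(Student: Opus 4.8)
The plan is to derive all three assertions directly from the identities already established in Proposition~\ref{EffectInfinitesimalAdjointAction}, together with the elementary behaviour of $\Theta$ and $E_J$ recorded in \ref{CocycleProperty} and \ref{EffectChangeOfJ}; no new analytic input is needed. First I would check that $\Theta_b$ is a symplectic cocycle. Pairing with $Y\in{\mathcal G}$ and using $\langle\ad^*_XE_J(b),Y\rangle=\langle E_J(b),[X,Y]\rangle$, the associated bilinear form is
$$\bigl\langle\Theta_b(X),Y\bigr\rangle=\Theta(X,Y)-\bigl\langle E_J(b),[X,Y]\bigr\rangle\,.$$
Thus $\Theta_b$ differs from the symplectic cocycle $\Theta$ only by the term $(X,Y)\mapsto-\langle E_J(b),[X,Y]\rangle$, which is precisely a symplectic coboundary of the kind appearing in the remark following \ref{MomentumPoisson} (here attached to the constant $-E_J(b)\in{\mathcal G}^*$). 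I would conclude by noting that this added term is manifestly skew-symmetric and that its cyclic sum $\langle E_J(b),[[X,Y],Z]+[[Y,Z],X]+[[Z,X],Y]\rangle$ vanishes by the Jacobi identity; since $\Theta$ already has both properties by \ref{CocycleProperty}, so does $\Theta_b$.

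For the vanishing $\Theta_b(b)=0$ I would simply set $X=b$ in the second identity of \ref{EffectInfinitesimalAdjointAction},
$$DE_J(b)\bigl([X,b]\bigr)=-\ad^*_XE_J(b)+\Theta(X)\,.$$
Since $[b,b]=0$ the left-hand side vanishes, giving $\Theta(b)=\ad^*_bE_J(b)$, which is exactly $\Theta_b(b)=\Theta(b)-\ad^*_bE_J(b)=0$. For the independence of the choice of momentum map I would combine the two transformation rules already at hand: under $J\mapsto J_1=J+\mu$ one has $E_{J_1}(b)=E_J(b)+\mu$ by \ref{EffectChangeOfJ}, while the associated Lie algebra cocycle becomes $\Theta_1(X)=\Theta(X)+\ad^*_X\mu$ by the remark following \ref{MomentumPoisson}. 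Forming $\Theta_b$ from the data $(\Theta_1,E_{J_1})$ then gives
$$\Theta_1(X)-\ad^*_X\bigl(E_J(b)+\mu\bigr)=\Theta(X)+\ad^*_X\mu-\ad^*_XE_J(b)-\ad^*_X\mu=\Theta_b(X)\,,$$
so the two $\ad^*_X\mu$ contributions cancel and $\Theta_b$ is unchanged.

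I do not expect a genuine obstacle here: the substantive content---that the correction $-\ad^*_{(\cdot)}E_J(b)$ turns $\Theta$ into a cocycle annihilating $b$---is already packaged in the second identity of \ref{EffectInfinitesimalAdjointAction}. The only place demanding attention is the bookkeeping of sign conventions for $\ad^*$ (taken here as the transpose of $\ad$) and for $\Theta$, so that the coboundary term in the first step, the specialization at $X=b$ in the second, and the cancellation of the $\ad^*_X\mu$ terms in the third all come out with the correct signs.
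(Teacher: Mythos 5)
Your proof is correct and follows essentially the same route as the paper: the cocycle identity reduces to the Jacobi identity applied to the coboundary term $-\langle E_J(b),[X,Y]\rangle$, and the invariance under $J\mapsto J+\mu$ comes from the cancellation of the two $\ad^*_X\mu$ contributions, exactly as in the paper. The only (harmless) variation is that you obtain $\Theta_b(b)=0$ by setting $X=b$ in the second identity of \ref{EffectInfinitesimalAdjointAction}, whereas the paper pairs $\Theta_b(b)$ with an arbitrary $X$ and invokes the first identity together with the skew-symmetry of $\Theta$; both give the result in one line.
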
 

\begin{proof}
For $X$, $Y$ and $Z$ in ${\mathcal G}$, we have since $\Theta$ is a $1$-cocycle, 
$\displaystyle\sum_{{\rm circ}(X,Y,Z)}$
meaning a sum over circular permutations of $X$, $Y$ and $Z$, using the Jacobi identity in $\mathcal G$, 
we have
 \begin{align*}
 \sum_{{\rm circ}(X,Y,Z)}\bigl\langle\Theta_b(X),[Y,Z]\bigr\rangle 
      &=\sum_{{\rm circ}(X,Y,Z)}\bigl\langle-\ad^*_XE_J(b),[Y,Z]\bigr\rangle\\
      &=\sum_{{\rm circ}(X,Y,Z)}\bigl\langle-E_J(b),\bigl[X,[Y,Z]\bigr]\bigr\rangle\\
      &=0\,.
 \end{align*}
The linear map $\Theta_b$ is therefore a $1$ cocycle, even a symplectic $1$-cocycle since for all $X$ and $Y\in{\mathcal G}$, $\bigl\langle\Theta_b(X),Y\bigr\rangle
=-\bigl\langle\Theta_b(Y),X\bigr\rangle$. 
\par\smallskip

Using the first equality stated in \ref{EffectInfinitesimalAdjointAction}, we have for any $X\in{\mathcal G}$
 $$
  \bigl\langle\Theta_b(b),X\bigr\rangle
   =\bigl\langle\Theta(b)-\ad^*_bE_J(b),X\bigr\rangle
   =-\bigl\langle\Theta(X),b\bigr\rangle+\bigl\langle E_J(b),[X,b]\bigr\rangle
   =0\,.
 $$
If we replace $J$ by $J_1=J+\mu$, the map $X\mapsto\Theta(X)$ 
is replaced by
$X\mapsto\Theta_1(X)=\Theta(X)+\ad^*_X\mu$ and $E_J(b)$ by $E_{J_1}(b)=E_J(b)+\mu$, therefore $\Theta_b$ remains unchanged.   
\end{proof}

The following lemma will allow us to define, for each $b\in\Omega$, a remarkable symmetric bilinear
form on the vector subspace $[b,{\mathcal G}]=\bigl\{[b,X]\,;X\in{\mathcal G}\bigr\}$ of the
Lie algebra $\mathcal G$.

\begin{lemma}\label{EuclideanProduct}
Let $\Xi$ be a $1$-cocycle of a finite-dimensional Lie algebra $\mathcal G$ for 
the coadjoint representation. For each $b\in\ker\Xi$, let $F_b=[{\mathcal G},b]$ be 
the set of elements $X\in{\mathcal G}$ which can be written $X=[X_1,b]$ for some 
$X_1\in{\mathcal G}$. Then $F_b$ is a vector subspace of ${\mathcal G}$, and the value 
of the right hand side of the equality
 $$\Gamma_b(X,Y)=\bigl\langle \Xi(X_1), Y\bigr\rangle\,,\quad\hbox{with}\ X_1\in{\mathcal G}\,,\ 
   X=[X_1,b]\in   F_b\,,\ Y\in F_b\,,
 $$ 
depends only on $X$ and $Y$, not on the choice of $X_1\in{\mathcal G}$ such that $X=[X_1,b]$. That equality defines a bilinear form $\Gamma_b$ on $F_b$ which is symmetric, \emph{i.e.} satisfies
 $$\Gamma_b(X,Y)=\Gamma_b(Y,X)\quad\hbox{for all}\ X\ \hbox{and}\ Y\in F_b\,.
 $$   
\end{lemma}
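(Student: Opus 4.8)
The plan is to route the entire argument through the skew-symmetric bilinear form attached to the cocycle, $\Theta(X,Y)=\langle\Xi(X),Y\rangle$. Since the relevant $\Xi$ is a symplectic cocycle for the coadjoint representation (this is the structure actually carried by the cocycles $\Theta_b$ to which the lemma will be applied), $\Theta$ is skew-symmetric and satisfies the defining identity
$$\Theta([X,Y],Z)+\Theta([Y,Z],X)+\Theta([Z,X],Y)=0\eqno{(\star)}$$
of \ref{CocycleProperty}. The hypothesis $b\in\ker\Xi$ means $\Xi(b)=0$, i.e. $\Theta(b,Y)=0$ for every $Y\in{\mathcal G}$, hence also $\Theta(Y,b)=0$ by skew-symmetry; this is the only place the kernel hypothesis is used. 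First I would record the trivial structural point that $F_b=[{\mathcal G},b]$ is the image of the linear endomorphism $X_1\mapsto[X_1,b]$ of ${\mathcal G}$, and so is a vector subspace.

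The heart of the proof is a single identity. Feeding the triple $(U,V,b)$ into $(\star)$ gives
$$\Theta([U,V],b)+\Theta([V,b],U)+\Theta([b,U],V)=0\,.$$
The first term vanishes because $\Theta(\cdot,b)=0$, and $[b,U]=-[U,b]$, so this collapses to
$$\Theta([U,b],V)=\Theta([V,b],U)\,,\qquad U,V\in{\mathcal G}\,.\eqno{(\dagger)}$$
Both assertions of the lemma then reduce to $(\dagger)$ combined with skew-symmetry.

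For well-definedness, suppose $X=[X_1,b]=[X_1',b]$, so $W:=X_1-X_1'$ satisfies $[W,b]=0$; for $Y=[Y_1,b]\in F_b$ I must check $\Theta(W,Y)=0$. Indeed $\Theta(W,[Y_1,b])=-\Theta([Y_1,b],W)=-\Theta([W,b],Y_1)=0$ by skew-symmetry, then $(\dagger)$, then $[W,b]=0$. Thus $\Gamma_b(X,Y)$ is independent of the choice of $X_1$; linearity in $Y$ is clear, and linearity in $X$ follows by fixing a linear right inverse of $X_1\mapsto[X_1,b]$ and using well-definedness, so $\Gamma_b$ is bilinear on $F_b$. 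For symmetry, writing $X=[X_1,b]$ and $Y=[Y_1,b]$,
$$\Gamma_b(X,Y)=\Theta(X_1,[Y_1,b])=-\Theta([Y_1,b],X_1)=-\Theta([X_1,b],Y_1)=\Theta(Y_1,[X_1,b])=\Gamma_b(Y,X)\,,$$
using skew-symmetry, $(\dagger)$, and skew-symmetry in turn.

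The one genuine obstacle is to spot that both claims are governed by $(\dagger)$ and that the term killed in passing from $(\star)$ to $(\dagger)$ is exactly $\Theta([U,V],b)$, which the kernel hypothesis annihilates; after that the rest is bookkeeping with skew-symmetry. I would also be careful that skew-symmetry is genuinely needed: if one instead phrases the cocycle condition as $\Xi([X,Y])=\ad^*_X\Xi(Y)-\ad^*_Y\Xi(X)$, the antisymmetric part of $\Gamma_b$ reduces to $\langle\Xi([X_1,Y_1]),b\rangle$, which is forced to vanish only because $\Theta(\cdot,b)=0$, and this in turn rests on $\Theta$ being skew. Working with the convention-free identity $(\star)$ sidesteps all sign-convention issues with $\ad^*$.
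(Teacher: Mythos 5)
Your proof is correct and follows essentially the same route as the paper's: both arguments rest on the cyclic (symplectic cocycle) identity evaluated with $b$ in one slot together with $\Xi(b)=0$, and your identity $(\dagger)$ is exactly the computation the paper carries out twice, once for well-definedness and once for symmetry. Your side remark that skew-symmetry of $\bigl\langle\Xi(\cdot),\cdot\bigr\rangle$ is genuinely needed --- so the hypothesis should really read \emph{symplectic} $1$-cocycle, as it is in the application to $\Theta_b$ --- identifies an imprecision in the statement that the paper's own proof also implicitly relies on.
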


\begin{proof}
Let $X_1$ and $X'_1\in{\mathcal G}$ be such that $[X_1,b]=[X'_1,b]=X$. 
Let $Y_1\in{\mathcal G}$ be such that $[Y_1,b]=Y$. We have
  \begin{align*}
   \bigl\langle\Xi(X_1-X'_1),Y\bigr\rangle
    &=\bigl\langle\Xi(X_1-X'_1),[Y_1,b]\bigr\rangle\\
    &=-\bigl\langle\Xi(Y_1),[b,X_1-X'_1]\bigr\rangle-\bigl\langle\Xi(b),[X_1-X'_1,Y_1]\bigr\rangle\\
    &=0
  \end{align*}
since $\Xi(b)=0$ and $[b,X_1-X'_1,]=0$. We have shown that 
$\bigl\langle\Xi(X_1),Y\bigr\rangle=\bigl\langle\Xi(X'_1),Y\bigr\rangle$. Therefore
$\Gamma_b$ is a bilinear form on $F_b$. Similarly
  \begin{align*}
   \bigl\langle\Xi(X_1),Y\bigr\rangle
    &=\bigl\langle\Xi(X_1),[Y_1,b]\bigr\rangle
     =-\bigl\langle\Xi(Y_1),[b,X_1]\bigr\rangle-\bigl\langle\Xi(b),[X_1,Y_1]\bigr\rangle
     =\bigl\langle\Xi(Y_1),X\bigr\rangle\,,
  \end{align*}
which proves that $\Gamma_b$ is symmetric. 
\end{proof}

\begin{theo}\label{SymmetricBilinearFormOnGammab}
The assumptions and notations are the same as those in Proposition 
\ref{PropertiesOfGeneralizedThermodynamicFunctions}. For each $b\in\Omega$, there exists on the
vector subspace $F_b=[{\mathcal G},b]$ of elements $X\in{\mathcal G}$ which can be written 
$X=[X_1,b]$ for some $X_1\in{\mathcal G}$, a symmetric negative bilinear form $\Gamma_b$ given by
  $$\Gamma_b(X,Y)=\bigl\langle \Theta_b(X_1), Y\bigr\rangle\,,\quad\hbox{with}\ X_1\in{\mathcal G}\,,\ 
   X=[X_1,b]\in   F_b\,,\ Y\in F_b\,,
 $$ 
where $\Theta_b:{\mathcal G}\to{\mathcal G}^*$ is the symplectic $1$-cocycle defined in 
\ref{cocycleThetab}. 
\end{theo}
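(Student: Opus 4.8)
The plan is to observe that almost all of the assertion — that $F_b$ is a vector subspace, that the right-hand side of the formula for $\Gamma_b$ is independent of the choice of $X_1$, and that $\Gamma_b$ is symmetric — is an immediate application of Lemma \ref{EuclideanProduct} with the particular choice $\Xi=\Theta_b$. To invoke that lemma I first need to check its two hypotheses for $\Theta_b$: that $\Theta_b$ is a $1$-cocycle of $\mathcal G$ for the coadjoint representation, and that $b\in\ker\Theta_b$. Both are furnished by Corollary \ref{cocycleThetab}, which states precisely that $\Theta_b$ is a symplectic $1$-cocycle and that $\Theta_b(b)=0$. Thus the structural part of the statement requires no new work; only the \emph{negativity} of $\Gamma_b$ remains to be established.

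The key step for negativity is to identify $\Theta_b$ with the differential of the mean-value map $E_J$. Combining the second equality of Proposition \ref{EffectInfinitesimalAdjointAction},
 $$DE_J(b)\bigl([X_1,b]\bigr)=-\ad^*_{X_1}E_J(b)+\Theta(X_1)\,,$$
with the definition $\Theta_b(X_1)=\Theta(X_1)-\ad^*_{X_1}E_J(b)$ from Corollary \ref{cocycleThetab}, I obtain the identity
 $$\Theta_b(X_1)=DE_J(b)\bigl([X_1,b]\bigr)=DE_J(b)(X)\,,\quad\hbox{where}\ X=[X_1,b]\in F_b\,.$$
Spotting this identification is the crux of the argument; once it is in hand everything else is routine.

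Feeding this into the defining formula, for $X=[X_1,b]\in F_b$ and $Y\in F_b$ I get
 $$\Gamma_b(X,Y)=\bigl\langle\Theta_b(X_1),Y\bigr\rangle=\bigl\langle DE_J(b)(X),Y\bigr\rangle\,.$$
By the explicit expression of $DE_J(b)$ in Proposition \ref{PropertiesOfGeneralizedThermodynamicFunctions}, the bilinear form $(X,Y)\mapsto\langle DE_J(b)(X),Y\rangle$ on all of $\mathcal G$ is manifestly symmetric (reconfirming the symmetry of $\Gamma_b$), and its restriction to $F_b$ is exactly $\Gamma_b$. Its diagonal values are controlled by Corollary \ref{CorollaryPropertiesOfGeneralizedThermodynamicFunctions}, which gives
 $$\Gamma_b(X,X)=\bigl\langle DE_J(b)(X),X\bigr\rangle
   =-\frac{1}{P(b)}\int_M\bigl\langle J-E_J(b),X\bigr\rangle^2\d\lambda_\omega\leq 0\,.$$
Hence $\Gamma_b$ is negative, completing the proof. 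The only genuine obstacle is recognizing that the abstract cocycle $\Theta_b$ coincides with $DE_J(b)$ on $F_b$; the negativity is then inherited directly from the variance interpretation of $DE_J(b)$ already recorded in Corollary \ref{CorollaryPropertiesOfGeneralizedThermodynamicFunctions}.
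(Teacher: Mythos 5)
Your proof is correct and follows essentially the same route as the paper: the structural claims (well-definedness and symmetry) are delegated to Lemma \ref{EuclideanProduct} via Corollary \ref{cocycleThetab}, and negativity is obtained by recognizing $\bigl\langle\Theta_b(X_1),[X_1,b]\bigr\rangle=\bigl\langle DE_J(b)\bigl([X_1,b]\bigr),[X_1,b]\bigr\rangle$ from Proposition \ref{EffectInfinitesimalAdjointAction} and then applying Corollary \ref{CorollaryPropertiesOfGeneralizedThermodynamicFunctions}. The only (harmless) addition is your remark that the explicit formula for $DE_J(b)$ independently reconfirms the symmetry.
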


\begin{proof} We have seen in \ref{cocycleThetab} that $b\in\ker\Theta_b$. 
The fact that the equality given in the statement above defines indeed a symmetric 
bilinear form on $F_b$ directly follows from
Lemma \ref{EuclideanProduct}. We only have to prove that this symmetric bilinear 
form is negative. Let $X\in F_b$ and $X_1\in{\mathcal G}$ such that $X=[X_1,b]$. Using 
\ref{EffectInfinitesimalAdjointAction}
and \ref{CorollaryPropertiesOfGeneralizedThermodynamicFunctions}, we have 
 \begin{align*}\Gamma_b(X,X)
       &=\bigl\langle\Theta_b(X_1),[X_1,b]\bigr\rangle
        =\bigl\langle\Theta(X_1)-\ad^*_{X_1}E_J(b),[X_1,b]\bigr\rangle 
        =\bigl\langle DE_J(b)[X_1,b],[X_1,b]\bigr\rangle\\
       &\leq 0\,.
 \end{align*}
The symmetric bilinear form $\Gamma_b$ on $F_b$ is therefore negative.
\end{proof}

\begin{rmk} The symmetric negative bilinear forms encountered in 
\ref{SymmetricBilinearFormOnGammab} and 
\ref{CorollaryPropertiesOfGeneralizedThermodynamicFunctions} seem to be linked with
the Fisher metric in Information Geometry discussed in
\cite{Barbaresco2014, Barbaresco2015, Barbaresco2016}.
\end{rmk}

\subsection{Examples of generalized Gibbs states}\label{ExamplesGeneralizedGibbsStates}

\subsubsection{Action of the group of rotations on a sphere}\label{RotationSphere}
The symplectic manifold $(M,\omega)$ considered here is the two-dimensional sphere of radius $R$
centered at the origin $O$ of a three-dimensional oriented Euclidean vector space $\vect E$, equipped with
its area element as symplectic form. The group $G$ of rotations around the origin 
(isomorphic to $\SO(3)$) acts on the sphere $M$ by a Hamiltonian action. The Lie algebra $\mathcal G$ of
$G$ can be identified with $\vect E$, the fundamental vector field on $M$ associated to an element
$\vect b$ in ${\mathcal G}\equiv \vect E$ being the vector field on $M$ whose value at a point
$m\in M$ is given by the vector product $\vect b\times \vect{Om}$. The dual ${\mathcal G}^*$ of
$\mathcal G$ will be too identified with $\vect E$, the coupling by duality being given by the 
Euclidean scalar product. The momentum map $J:M\to{\mathcal G}^*\equiv \vect E$ is given by
 $$J(m)=-R\,\vect{Om}\,,\quad m\in M\,.
 $$
Therefore, for any $\vect b\in{\mathcal G}\equiv \vect E$,
 $$\bigl\langle J(m),\vect b\bigr\rangle=-R\,\vect{Om}\cdot\vect b\,.
 $$
Let $\vect b$ be any element in ${\mathcal G}\equiv \vect E$. To calculate the partition function
$P(\vect b)$ we choose an orthonormal basis $(\vect{e_x}, \vect{e_y}, \vect{e_z})$ of $\vect E$ such that
$\vect b=\Vert \vect b\Vert\vect{e_z}$, with $\Vert \vect b\Vert\in\RR^+$, and we use angular coordinates 
$(\varphi,\theta)$ on the sphere $M$. The coordinates of a point $m\in M$ are
 $$x=R\cos\theta\cos\varphi\,,\quad y=R\cos\theta\sin\varphi\,,\quad z=R\sin\theta\,.
 $$
We have
 $$P(\vect b)=\int_0^{2\pi}\left(\int_{-\pi/2}^{\pi/2}R^2\exp(R\Vert\vect b\Vert\sin\theta\,\d\theta\right)
               \d\varphi
               =\frac{4\pi R}{\Vert\vect b\Vert}\,\sh\bigl(R\Vert\vect b\Vert\bigr)\,.
 $$
The probability density (with respect to the natural area measure on the sphere $M$)
of the generalized Gibbs state associated to $\vect b$ is
 $$\rho_b(m)=\frac{1}{P(\vect b)}\,\exp(\vect{Om}\cdot\vect b)\,,\quad m\in M\,.$$
We observe that $\rho_b$ reaches its maximal value at the point $m\in M$ such that
$\displaystyle\vect{Om}=\frac{R\vect b}{\Vert\vect b\Vert}$ and its minimal value at the
diametrally opposed point.

\subsubsection{The Galilean group, its Lie algebra and its actions}
\label{GalileanGroup}

In view of the presentation, made below, of
some physically meaningful generalized Gibbs states for Hamiltonian actions
of subgroups of the Galilean group, we recall in this section some notions about
the space-time of classical (non-relativistic) Mechanics, the Galilean group, its Lie algebra
and its Hamiltonian actions. The interested reader will find a much more detailed treatment
on these subjects in the book by Souriau \cite{Souriau1969} or in the recent book 
by G.~de~Saxcé and C.~Vallée \cite{deSaxceVallee2016}. The paper \cite{deSaxceVallee2012}
presents a nice application of Galilean invariance in Thermodynamics.  
\par\smallskip

The space-time of classical Mechanics is a four-dimensional real affine space which, once an
inertial reference frame, units of length and time, orthonormal bases of space  and time
are chosen, can be identified with $\RR^4\equiv\RR^3\times \RR$ (coordinates $x$, $y$, $z$, $t$).
The first three coordinates $x$, $y$ and $z$ can be considered as the three components of a 
vector $\vect r\in \RR^3$, therefore an element of space-time can be denoted by $(\vect r,t)$. 
However, as the action of the Galilean group will show, the splitting of space-time into space and time is not uniquely determined, it depends on the choice of an inertial reference frame.
In classical Mechanics, there exists an absolute time, but no absolute space. There exists instead a space (which is an Euclidean affine three-dimensional space)
for each value of the time. The spaces for two distinct values of the time 
should be considered as disjoint.
\par\smallskip

The space-time being identified with $\RR^3\times\RR$ as explained above, the Galilean group $G$ 
can be identified with the set of matrices of the form
 $$\begin{pmatrix}
       A&\vect b&\vect d\\0&1&e\\0&0&1
          \end{pmatrix}\,,\quad\hbox{with}\ A\in \SO(3)\,,\ \vect b\ \hbox{and} \vect d\in\RR^3\,,
           \ e\in\RR\,,\eqno{(*)}
 $$ 
the vector space $\RR^3$ being oriented and endowed with its usual Euclidean structure,
the matrix $A\in\SO(3)$ acting on it.
\par\smallskip

The action of the Galilean group $G$ on space-time, identified as indicated above with 
$\RR^3\times \RR$, is the affine action 
 $$\begin{pmatrix}
    \vect r\\t\\1
     \end{pmatrix}
      \mapsto
       \begin{pmatrix}
       A&\vect b&\vect d\\0&1&e\\0&0&1
          \end{pmatrix}
   \begin{pmatrix}\vect r\\t\\1
   \end{pmatrix}
 =\begin{pmatrix}A\vect r+t\vect b+\vect d\\t+e\\1
  \end{pmatrix}\,.
 $$
The Lie algebra $\mathcal G$ of the Galilean group $G$ can be identified 
with the space of matrices of the form
 $$\begin{pmatrix}
       j(\vect \omega)&\vect \beta&\vect \delta\\0&0&\varepsilon\\0&0&0
          \end{pmatrix}\,,\quad\hbox{with}\ \vect\omega\,,\ \vect\beta\ \hbox{and}\ \vect\delta
           \in\RR^3\,,\ \varepsilon\in\RR\,.\eqno{(**)}
 $$
We have denoted by $j(\vect\omega)$ the $3\times 3$ skew-symmetric matrix
 $$j(\vect\omega)=\begin{pmatrix}0&-\omega_z&\omega_y\\
                                 \omega_z&0&-\omega_x\\
                                 -\omega_y&\omega_x&0
                  \end{pmatrix}\,.
 $$
The matrix $j(\vect \omega)$ is an element in the Lie algebra $\so(3)$, and its action on a vector 
$\vect r\in\RR^3$ is given by the vector product
 $$j(\vect\omega)\vect r=\vect\omega\times\vect r\,.
 $$    
Let us consider a mechanical system made by a point particle of mass $m$ whose position 
and velocity at time $t$, in the reference frame allowing the identification of space-time with
$\RR^3\times \RR$, are the vectors $\vect r$ and $\vect v\in \RR^3$. The action of an element
of the Galilean group on $\vect r,\vect v$ and $t$ can be written as
 $$\begin{pmatrix}\vect r&\vect v\\t&1\\1&0
    \end{pmatrix}
     \mapsto
       \begin{pmatrix}
       A&\vect b&\vect d\\0&1&e\\0&0&1
          \end{pmatrix}
\begin{pmatrix}\vect r&\vect v\\t&1\\1&0
\end{pmatrix}
=\begin{pmatrix}A\vect r+t\vect b+\vect d&A\vect v+\vect b\\t+e&1\\1&0
  \end{pmatrix}\,.
$$
Souriau has shown in his book \cite{Souriau1969} that this action is Hamiltonian, with the map
$J$, defined on the evolution space of the particle, with value in the dual ${\mathcal G}^*$
of the Lie algebra $\mathcal G$ of the Galilean group, as momentum map
 $$J(\vect r,t,\vect v,m)=m\left(\vect r\times\vect v,\ \vect r-t\vect v,\ \vect v,\ \frac{1}{2}\Vert
\vect v\Vert^2\right)\,.
 $$ 
Let
$\displaystyle b=\begin{pmatrix}
       j(\vect \omega)&\vect \beta&\vect \delta\\0&0&\varepsilon\\0&0&0
          \end{pmatrix}$ be an element in $\mathcal G$. Its coupling with
$J(\vect r,t,\vect v,m)\in{\mathcal G}^*$ is given by the formula
 $$\bigl\langle J(\vect r,t,\vect v,m),b\bigr\rangle
   =m\Bigl(\vect\omega\cdot(\vect r\times\vect v) - (\vect r-t\vect v)\cdot\vect\beta
      + \vect v\cdot \vect\delta - \frac{1}{2}\Vert\vect v\Vert^2\varepsilon\Bigr)\,. 
 $$ 

\subsubsection{One-parameter subgroups of the Galilean group}
\label{OneParameterSubgroups}
In his book \cite{Souriau1969}, J.-M. Souriau has shown that when the considered 
Lie group action is the action of the full Galilean group on the space of motions of
an isolated mechanical system, the open subset $\Omega$ of the Lie algebra $\mathcal G$
of the Galilean group on which the conditions specified in 
\ref{AssumptionsForGeneralizedThermodynamicFunctions} are satisfied is empty.
In other words, generalized Gibbs states of the full Galilean group do not exist.
However, generalized Gibbs states for one-parameter subgroups of the Galilean group 
do exist which have an interesting physical meaning.
\par\smallskip

Let us consider the element $b$ of $\mathcal G$ given by formula
$(*)$ of \ref{GalileanGroup}, and assume that $\varepsilon\neq 0$. 
The one-parameter subgroup $G_1$ of the Galilean group generated by $b$ is the set of matrices
$\exp(\tau b)$, with $\tau\in\RR$. We have
 $$\exp(\tau b)
   =\begin{pmatrix}
       A(\tau)&\vect b(\tau)&\vect d(\tau)\\
       0&1&\tau\varepsilon\\0&0&1
          \end{pmatrix}\,,
 $$
with
 \begin{align*}
  A(\tau)&=\exp\bigl(\tau j(\vect\omega)\bigr)\,,\\
  \vect b(\tau)&=\left(\sum_{n=1}^\infty\frac{\tau^n}{n!}
                 \bigl(j(\vect\omega)\bigr)^{n-1}\right)\vect\beta\,,\\
  \vect d(\tau)&=\left(\sum_{n=1}^\infty\frac{\tau^n}{n!}
                 \bigl(j(\vect\omega)\bigr)^{n-1}\right)\vect\delta
                +\varepsilon\left(\sum_{n=2}^\infty\frac{\tau^n}{n!}
                 \bigl(j(\vect\omega)\bigr)^{n-2}\right)\vect\beta\,, 
 \end{align*} 
with the usual convention that $\bigl(j(\vect\omega)\bigr)^0$ is the unit matrix.
\par\smallskip

The physical meaning of this one-parameter subgroup of the Galilean group can be understood
as follows.
Let us call \emph{fixed} the affine Euclidean reference frame of space
$(O,\vect{e_x},\vect{e_y},\vect{e_z})$ used to represent, at time $t=0$, a point 
in space by a vector $\vect r$ or by its three components $x$, $y$ and $z$. Let us set
$\displaystyle \tau=\frac{t}{\varepsilon}$. For each time $t\in \RR$, the action of
$\displaystyle A(\tau)=A\left(\frac{t}{\varepsilon}\right)$ maps the fixed reference frame 
$(O,\vect{e_x},\vect{e_y},\vect{e_z})$ onto another affine Euclidean reference frame
$\bigl(O(t),\vect{e_x}(t),\vect{e_y}(t),\vect{e_z}(t)\bigr)$, which we call the
\emph{moving} reference frame. The velocity and the acceleration of the relative  
motion of the moving reference frame
with respect to the fixed reference frame is given, at time $t=0$, by the fundamental 
vector field associated to the element $b$ of the Lie algebra ${\mathcal G}$ of 
the Galilean group: we see that each point in space has a motion composed of a 
rotation around the axis through $O$ parallel to $\vect\omega$, 
at an angular velocity $\displaystyle\frac{\Vert\vect\omega\Vert}{\varepsilon}$,
and simultaneously a uniformly accelerated motion of translation at an initial
velocity $\displaystyle\frac{\vect \delta}{\varepsilon}$ and acceleration
$\displaystyle\frac{\vect \beta}{\varepsilon}$. At time $t$, the velocity and acceleration
of the moving reference frame with respect to its instantaneous position
at that time can be described in a similar manner, but instead of $O$, $\vect\omega$,
$\vect \beta$ and $\vect\delta$ we must use the corresponding transformed elements
by the action of $\displaystyle A(\tau)=A\left(\frac{t}{\varepsilon}\right)$.

\subsubsection{A gas contained in a moving vessel}

We consider a mechanical system made by a gas of $N$ point particles, 
indexed by $i\in\{1,2,\ldots,N\}$, contained in a vessel with rigid, undeformable walls,
whose motion in space is given by the action of the one-parameter subgroup $G_1$ of the
Galilean group made by the
$\displaystyle A\left(\frac{t}{\varepsilon}\right)$, with $t\in\RR$, above described.
We denote by $m_i$, $\vect{r_i}(t)$ and $\vect{v_i}(t)$ the mass, position vector 
and velocity vector, respectively, of the
$i$-th particle at time $t$. 
Since the motion of the vessel containing the gas
is precisely given by the action of $G_1$, the boundary conditions imposed to the system
are invariant by that action, which leaves invariant the evolution 
space of the mechanical system, is Hamiltonian and projects onto a Hamiltonian 
action of $G_1$ on the symplectic manifold of motions of the system. We can therefore 
consider the generalized Gibbs states of the system, as discussed in 
\ref{GeneralizedGibbsStates}. 
We must evaluate the momentum map $J$ of that action and
its coupling with the element $b\in{\mathcal G}$. 
As in \ref{ClassicalIdealGas} we will neglect,
for that evaluation, the contributions of the collisions of the particles between 
themselves and with the walls of the vessel. The momentum map can therefore be evaluated as if all particles were free, and its coupling $\langle J,b\rangle$ with $b$ is the 
sum $\sum_{i=1}^N\langle J_i,b\rangle$ of the momentum map $J_i$ of the $i$-th particle, considered as free, with $b$. We have
 $$\bigl\langle J_i(\vect{r_i},t,\vect{v_i},m_i),b\bigr\rangle
   =m_i\Bigl(\vect\omega\cdot(\vect{r_i}\times\vect{v_i}) - (\vect{r_i}-t\vect{v_i})\cdot\vect\beta
      + \vect{v_i}\cdot \vect\delta - \frac{1}{2}\Vert\vect{v_i}\Vert^2\varepsilon\Bigr)\,. 
 $$ 
Following Souriau \cite{Souriau1969}, chapter IV, pages 299--303, we observe that
$\langle J_i,b\rangle$ is invariant by the action of $G_1$. We can therefore define
$\vect{r_{i\,0}}$, $t_0$ and $\vect{v_{i\,0}}$ by setting
 $$\begin{pmatrix}\vect{r_{i\,0}}&\vect{v_{i\,0}}\\t_0&1\\1&0
    \end{pmatrix}=
     \exp\left(-\frac{t}{\varepsilon}\,b\right)
 \begin{pmatrix}\vect{r_i}&\vect{v_i}\\t&1\\1&0
 \end{pmatrix}
 $$
and write 
 $$\bigl\langle J_i(\vect{r_i},t,\vect{v_i},m_i),b\bigr\rangle
   =\bigl\langle J_i(\vect{r_{i\,0}},t_0,\vect{v_{i\,0}},m_i),b\bigr\rangle\,.
 $$
The vectors $\vect{r_{i\,0}}$ and $\vect{v_{i\,0}}$ have a clear physical meaning: they are
the vectors $\vect{r_i}$ and $\vect{v_i}$ as seen by an observer moving with the
moving affine Euclidean reference frame $\bigl(O(t),\vect{e_x}(t),\vect{e_y}(t),
\vect{e_z}(t)\bigr)$. Moreover, as can be easily verified, $t_0=0$ of course. 
We therefore have  
 \begin{align*}
  \bigl\langle J_i(\vect{r_i},t,\vect{v_i},m_i),b\bigr\rangle
   &=m_i\Bigl(\vect\omega\cdot(\vect{r_{i\,0}}\times\vect{v_{i\,0}}) 
    - \vect{r_{i\,0}}\cdot\vect\beta
      + \vect{v_{i\,0}}\cdot \vect\delta 
     - \frac{1}{2}\Vert\vect{v_{i\,0}}\Vert^2\varepsilon\Bigr)\\
   &=m_i\Bigl(\vect{v_{i\,0}}\cdot(\vect\omega\times\vect{r_{i\,0}} +\vect\delta) 
    - \vect{r_{i\,0}}\cdot\vect\beta
     - \frac{1}{2}\Vert\vect{v_{i\,0}}\Vert^2\varepsilon\Bigr)
 \end{align*}  
where we have used the well known property of the mixed product
 $$\vect\omega\cdot(\vect{r_{i\,0}}\times\vect{v_{i\,0}})=
    \vect{v_{i\,0}}\cdot(\vect\omega\times\vect{r_{i\,0}})\,.
 $$
Let us set
 $$\vect U^*=\frac{1}{\varepsilon}(\vect\omega\times\vect{r_{i\,0}} +\vect\delta)\,.
 $$
Using $\vect{v_{i\,0}}-\vect U^*$ and $\vect U^*$ instead of $\vect{v_{i\,0}}$, 
we can write
 $$\bigl\langle J_i(\vect{r_i},t,\vect{v_i},m_i),b\bigr\rangle
   =m_i\varepsilon
        \left(-\frac{1}{2}\,\Vert\vect{v_{i\,0}}-\vect U^*\Vert^2
       - \vect{r_{i\,0}}\cdot\frac{\vect\beta}{\varepsilon}
        + \frac{1}{2}\,\Vert\vect U^*\Vert^2
           \right)\,.
 $$
We observe that the vector $\vect U^*$ 
only depends on $\varepsilon$, $\vect \omega$,
$\vect \delta$, which are constants once the element $b\in{\mathcal G}$ is chosen,
and of $\vect{r_{i\,0}}$,
not on $\vect{v_{i\,0}}$. It has a clear physical meaning: it is the value of the
velocity of the moving affine reference frame with respect to the fixed affine reference frame,
at point $\vect{r_{i\,0}}$ seen by an observer linked to the moving reference frame.
Therefore the vector $\vect{w_{i\,0}}=\vect{v_{i\,0}}-\vect U^*$ is the \emph{relative velocity}
of the $i$-th particle with respect to the moving affine reference frame, seen 
by an observer linked to the moving reference frame. 
\par\smallskip

The three components of $\vect{r_{i\,0}}$ and the three components of
$\vect{p_{i\,0}}=m_i\vect{w_{i\,0}}$ make a system of Darboux coordinates 
on the six-dimensional symplectic manilold $(M_i,\omega_i)$ of motions of 
the $i$-th particle. With a slight abuse of notations, we can consider the
momentum map $J_i$ as defined on the space of motions of the $i$-th particle, 
instead of being defined on the evolution space of this particle, and write
  $$\bigl\langle J_i(\vect{r_{i\,0}},\vect{p_{i,0}}),b\bigr\rangle
   = - \varepsilon
        \left(\frac{1}{2m_i}\,\Vert\vect{p_{i\,0}}\Vert^2 + m_if_i(\vect{r_{i\,0}})\right)\,,
         \ \vect{p_{i\,0}}=m_i\vect{w_{i\,0}}=m_i(\vect{v_{i\,0}}-\vect U^*)\,,
         \eqno{(*)}
 $$
and
 $$f_i(\vect{r_{i\,0}})=\vect{r_{i\,0}}\cdot\frac{\vect\beta}{\varepsilon}
        - \frac{1}{2\varepsilon^2}\,\Vert\vect \omega\times\vect{r_{i\,0}}\Vert^2
        - \frac{\vect\delta}{\varepsilon}\cdot\left(\frac{\vect\omega}{\varepsilon}
           \,\times\vect{r_{i\,0}}\right)
        -\frac{1}{2\varepsilon^2}\Vert\vect \delta\Vert^2
           \,.
 $$ 
The above equality $(*)$ is well suited for the determination of generalized Gibbs states of the system. Let us set
 $$P_i(b)=\int_{M_i}\exp\bigl(-\langle J_i,b\rangle\bigr)\d\lambda_{\omega_i}\,,\quad
   E_{J_i}(b)=\int_{M_i}J_i\exp\bigl(-\langle J_i,b\rangle\bigr)\d\lambda_{\omega_i}\,.
 $$
The integrals in the right hand sides of these equalities converge if and only if
$\varepsilon<0$. It means that the matrix $b$ belongs to the subset $\Omega$ of the
one-dimensional Lie algebra of the considered one-parameter subgroup $G_1$ of the Galilean group on which generalized Gibbs states can be defined if and only if
$\varepsilon<0$. Assuming that condition satisfied, we can use Definitions 
\ref{DefiGeneralizedGibbsState}. The generalized Gibbs state determined by $b$ has the smooth
density, with respect to the Liouville measure $\prod_{i=1}^N\lambda_{\omega_i}$ on the symplectic manifold of motions $\Pi_{i=1}^N(M_i,\omega_i)$,
 $$\rho(b)=\prod_{i=1}^N\rho_i(b)\,,\quad\hbox{with}\ 
    \rho_i(b)=\frac{1}{P_i(b)}\exp\bigl(-\langle J_i,b\rangle\bigr)\,.
 $$    
The partition function, whose expression is
 $$P(b)=\prod_{i=1}^N P_i(b)\,,
 $$
can be used, with the help of the formulae given in \ref{GeneralizedThermodynamicFunctions}, 
to determine all the generalized thermodynamic functions of the gas in a generalized 
thermodynamic equilibrium state.

\begin{rmks}\hfill

\par\noindent
{\rm 1.\quad} The physical meaning of the parameter $\varepsilon$ which appears in the expression
of the matrix $b$ is clearly apparent in the above expression $(*)$ of $\langle J_i,b\rangle$:
 $$\varepsilon=-\frac{1}{kT}\,,
 $$
$T$ being the absolute temperature and $k$ the Boltzmann's constant.

\par\smallskip\noindent
{\rm 2.\quad} The same expression $(*)$ above shows that the relative motion of the gas with respect to the moving vessel in which it is contained, seen by an observer linked to that moving vessel, is described by a Hamiltonian system 
in which the kinetic and potential energies of the $i$-th particle are, respectively, 
$\displaystyle\frac{1}{2m_i}\Vert\vect{p_{i\,0}}\Vert^2$ and $m_if_i(\vect{r_{i\,0}})$. This result can be obtained in another way: by deriving the Hamiltonian which governs the relative motion of a mechanical system with respect to a moving frame, as used by Jacobi \cite{Jacobi1836} to determine the famous Jacobi integral of the restricted circular three-body problem 
(in which two big planets move on concentric circular orbits around their common center of mass, and a third planet of negligible mass moves in the gravitational field created by the two big planets).

\par\smallskip\noindent
{\rm 3.\quad} The generalized Gibbs state of the system imposes to the various parts of the system, \emph{i.e.}, to the various particles, to be at the same temperature 
$\displaystyle T=-\frac{1}{k\varepsilon}$ and to be statistically at rest in the same moving reference frame.  
\end{rmks}

\subsubsection{Three examples}\hfill
\par\noindent
{\rm 1.\quad} Let us set $\vect\omega=0$ and $\vect\beta=0$. The motion of the moving vessel
containing the gas (with respect to the so called \emph{fixed reference frame}) is a translation at a constant velocity $\displaystyle\frac{\vect\delta}{\varepsilon}$. The function
$f_i(\vect{r_{i\,0}})$ is then a constant. In the moving reference frame, which is an inertial frame, we recover the thermodynamic equilibrium state of a monoatomic gas discussed in
\ref{ClassicalIdealGas}.

\par\smallskip\noindent
{\rm 2.\quad} Let us set now $\vect\omega=0$ and $\vect\delta=0$. The motion of the moving vessel containing the gas (with respect to the so called \emph{fixed reference frame}) is now an uniformly accelerated translation, with acceleration 
$\displaystyle\frac{\vect\beta}{\varepsilon}$.
The function $f_i(\vect{r_{i\,0}})$ now is 
 $$f_i(\vect{r_{i\,0}})=\vect{r_{i\,0}}\cdot\frac{\vect\beta}{\varepsilon}\,.
 $$
In the moving reference frame, which is no more inertial, we recover the thermodynamic 
equilibrium state of a monoatomic gas in a gravity field 
$\displaystyle \vect g=-\frac{\vect\beta}{\varepsilon}$ discussed in
\ref{ClassicalGasWithGravity}.

\par\smallskip\noindent
{\rm 3.\quad} Let us now set $\vect\omega=\omega\vect{e_z}$, $\vect\beta=0$ and $\vect\delta=0$.
The motion of the moving vessel containing the gas (with respect to the so called 
\emph{fixed reference frame}) is now a rotation around the coordinate $z$ axis at a constant
angular velocity $\displaystyle\frac{\omega}{\varepsilon}$. The function $f_i(\vect{r_{i\,0}})$
is now
 $$f_i(\vect{r_{i\,0}})=- \frac{\omega^2}{2\varepsilon^2}\,
                           \Vert\vect{e_z}\times\vect{r_{i\,0}}\Vert^2\,.
 $$
The length $\Delta=\Vert\vect{e_z}\times\vect{r_{i,0}}\Vert$ is the distance between the 
$i$-th particle and the axis of rotation of the moving frame (the coordinate $z$ axis). 
Moreover, we have seen that
$\displaystyle \varepsilon=\frac{-1}{kT}$. Therefore in 
the generalized Gibbs state, the probability density $\rho_i(b)$ of presence 
of the $i$-th particle in its symplectic manifold of motion $M_i,\omega_i$, with respect to the Liouville measure
$\lambda_{\omega_i}$, is 
 $$\rho_i(b)=\frac{1}{P_i(b)}\,\exp\bigl(-\langle J_i,b\rangle\bigr)
            =\hbox{Constant}\cdot\exp\left(-\frac{1}{2m_ikT}\,\Vert\vect{p_{i\,0}}\Vert^2
              +\frac{m_i}{2kT}\left(\frac{\omega}{\varepsilon}\right)^2\Delta^2\right)\,.
 $$
This formula describes the behaviour of a gas made of point particles of various masses in a centrifuge rotating at a constant angular velocity $\displaystyle\frac{\omega}{\varepsilon}$:
the heavier particles concentrate farther from the rotation axis than the lighter ones.

\subsubsection{Other applications of generalized Gibbs states}
Applications of generalized Gibbs states in Thermodynamics of Continua, with the use of affine tensors, are presented in the papers by G.~de~Saxcé \cite{deSaxce2015, deSaxce2016}.
\par\smallskip
 
Several applications of generalized Gibbs states of subgroups of the Poincaré group
were considered by J.-M. Souriau. For example, he presents  in
his book \cite{Souriau1969}, chapter IV, page 308, a generalized Gibbs 
which describes the behaviour of a gas in a 
relativistic centrifuge, and 
in his papers \cite{Souriau1974, Souriau1975}, very nice applications of such generalized
Gibbs states in Cosmology.
 
\section{Acknowledgements}
I address my thanks to Alain Chenciner for his interest and his help to study
the works of Claude Shannon, to Roger Balian for his comments and his explanations about thermodynamic potentials, and to Frédéric Barbaresco for his kind invitation to participate
in the GSI 2015 conference and his encouragements.

\end{document}